\documentclass[preprint,12pt,a4paper]{elsarticle}
\usepackage[T1]{fontenc}
\usepackage{lmodern,amsmath,amssymb,amsthm,mathtools,mathrsfs}
\usepackage{microtype,booktabs,array,enumitem}
\usepackage{tikz}
\usetikzlibrary{arrows.meta,positioning}
\usepackage[hidelinks]{hyperref}
\usepackage[nameinlink,noabbrev]{cleveref}
\numberwithin{equation}{section}
\newtheorem{theorem}{Theorem}[section]
\newtheorem{proposition}[theorem]{Proposition}
\newtheorem{lemma}[theorem]{Lemma}
\newtheorem{corollary}[theorem]{Corollary}
\theoremstyle{definition}

\theoremstyle{remark}
\newtheorem{remark}[theorem]{Remark}

\crefname{theorem}{Theorem}{Theorems}
\Crefname{theorem}{Theorem}{Theorems}
\crefname{proposition}{Proposition}{Propositions}
\Crefname{proposition}{Proposition}{Propositions}
\crefname{lemma}{Lemma}{Lemmas}
\Crefname{lemma}{Lemma}{Lemmas}
\crefname{corollary}{Corollary}{Corollaries}
\Crefname{corollary}{Corollary}{Corollaries}
\crefname{definition}{Definition}{Definitions}
\Crefname{definition}{Definition}{Definitions}
\crefname{remark}{Remark}{Remarks}
\Crefname{remark}{Remark}{Remarks}
\crefname{section}{Section}{Sections}
\Crefname{section}{Section}{Sections}

\crefname{figure}{Figure}{Figures}
\Crefname{figure}{Figure}{Figures}

\newcommand{\Z}{\mathbb Z}
\newcommand{\Q}{\mathbb Q}
\newcommand{\N}{\mathbb Z_{\geq0}}
\newcommand{\LL}{\mathsf L}
\newcommand{\RR}{\mathsf R}
\newcommand{\PP}{\mathcal P}
\newcommand{\MM}{\mathcal M}
\newcommand{\GG}{\mathcal G}
\newcommand{\TT}{\mathcal T}
\newcommand{\OO}{\mathcal O}
\newcommand{\CQ}{\mathscr C_Q}
\newcommand{\Ih}{\widehat I}
\newcommand{\IhQ}{\widehat I_Q}
\newcommand{\ct}{\widetilde C}
\newcommand{\om}{\omega_Q}

\DeclareMathOperator{\Ad}{Ad}

\DeclareMathOperator{\Spec}{Spec}

\DeclareMathOperator{\wt}{wt}

\journal{}

\makeatletter
\def\ps@pprintTitle{%
  \let\@oddhead\@empty
  \let\@evenhead\@empty
  \let\@oddfoot\@empty
  \let\@evenfoot\@empty}
\makeatother

\biboptions{sort&compress}
\hypersetup{pdftitle={Spectral Rigidity in Toroidal Grothendieck Rings and Multiparameter Quantum Groups},pdfauthor={Xiaomin Tang and Yu Zhang}}

\begin{document}
\begin{frontmatter}

\title{Spectral Rigidity in Toroidal Grothendieck Rings and Multiparameter Quantum Groups}

\author[hlju]{Xiaomin Tang}
\ead{tangxm@hlju.edu.cn}
\author[hlju]{Yu Zhang\corref{cor1}}
\ead{zhangyu@hlju.edu.cn}
\cortext[cor1]{Corresponding author.}

\address[hlju]{School of Mathematics and Statistics, Heilongjiang University,
Harbin 150080, P. R. China}

\begin{abstract}
Let \(Q\) be a bipartite Dynkin quiver of finite simply-laced type and
\(\mathscr C_Q\) the corresponding Hernandez--Leclerc category.  We study
a two-parameter specialization of the Fedele--Hernandez toroidal
Grothendieck ring and identify its generic positive part with a
Cartan-type multiparameter quantum group.

The structural input is the identity
\[
2D^{(2)}-D_0=\omega_Q\circ(\deg_Q,\deg_Q),
\]
where \(D_0,D^{(2)}\) are the two effective toroidal commutation forms and
\(\omega_Q\) is the antisymmetric Euler form of \(Q\).  It realizes the
specialized character algebra, after an auxiliary scalar extension, as a
graded bicharacter twist of the one-parameter algebra at
\(v=t_0t^{1/2}\).  This yields the required root grading and PBW graded
dimensions.

For every source--sink edge \(r\to s\), these dimensions combine with the
local \(A\)--\(Y\) commutation formula to give a quadratic annihilator for
\(\operatorname{Ad}_{Z_r}\).  Consequently
\[
\operatorname{Spec}_{Z_r}(Z_s)=\{t_0t,t_0^{-1}\},
\]
without thinness or type-specific character formulas.  The two spectral
values yield the oriented multiparameter Serre relations.  With
\[
q_{ii}=t_0^2t,\qquad
q_{ij}=t_0^{-1},\qquad
q_{ji}=t_0^{-1}t^{-1}\quad(i\to j),
\]
and \(q_{ij}=1\) on nonedges, we obtain
\[
U_{\mathbf q}^{+}(\mathfrak g)
\simeq
\mathscr K_\infty(\mathscr C_Q)\otimes_RK
\]
for \(\mathfrak g\) of type \(A_n,D_n,E_6,E_7,E_8\).
\end{abstract}

\begin{keyword}
Toroidal Grothendieck ring \sep quantum affine algebra \sep Euler form
\sep spectral rigidity \sep multiparameter quantum group \sep graded twist
\MSC[2020] 17B37 \sep 16T20 \sep 20G42 \sep 13F60
\end{keyword}

\end{frontmatter}

\section{Introduction}
The representation theory of quantum affine algebras provides several
noncommutative deformations of Grothendieck rings. Frenkel--Reshetikhin
\(q\)-characters record the joint generalized eigenspaces of the Drinfeld
Cartan currents \cite{FR}. Their deformations, constructed geometrically by
Nakajima and algebraically by Hernandez, lead to quantum Grothendieck rings
and algorithms for characters of simple modules \cite{Nak,H}. For a Dynkin
quiver \(Q\) of type \(A,D\), or \(E\), Hernandez and Leclerc introduced a
monoidal category \(\CQ\) and identified its quantum Grothendieck ring with
the positive part of the quantum group of the same type
\cite[Theorem~6.1]{HL}. Their
construction relates spectral positions, Auslander--Reiten translation,
inverse quantum Cartan coefficients, and dual PBW generators.

Fedele and Hernandez introduced a deformation involving several toroidal
parameters and constructed its flat toroidal character theory
\cite[Theorem~5.8]{FH}. Flatness requires relations between these parameters;
it does not hold for the naive unrestricted deformation. Their quotient
construction has a particularly rigid local feature: a root monomial
\(A_{i,p}\) quasi-commutes with a spectral variable \(Y_{j,s}\) through a
single combination of the toroidal parameters, and the exponent vanishes
unless \(i=j\) and \(|p-s|=1\). They exhibited a multiparameter positive-part
phenomenon in type \(A_2\) and proposed a wider relationship for the categories
\(\CQ\) \cite[\S8.3]{FH}.

We give a uniform algebraic treatment of the two-parameter specialization
for bipartite quivers of finite simply-laced type. The central question is
local: If a source-side simple-root class is
a monomial, how many eigenvalues can its conjugation have on an adjacent
distinguished character? In type \(A\), column tableaux can make cancellations
visible. At a trivalent vertex, and for non-thin fundamental modules, a
support-by-support approach is less natural. We instead use a grading and a
small homogeneous dimension.

A preliminary issue is essential. Ambient flatness alone does not establish
the root grading needed to compare cubic words, and coefficients in a
toroidal triangular basis cannot be identified a priori with ordinary
tensor-product multiplicities. We resolve both issues by comparing the
toroidal product directly with the one-parameter product before invoking
rank-two dimensions.

\subsection*{Standing hypotheses}
Throughout the paper, \(Q\) is a connected bipartite Dynkin quiver of
finite simply-laced type, with vertex set \(I=I_+\sqcup I_-\) and every
edge oriented from \(I_+\) to \(I_-\).  The corresponding simple Lie
algebra is denoted by \(\mathfrak g\).  All deformation parameters are
generic: \(t_0^{1/2}\) and \(u=t^{1/2}\) are algebraically independent
over \(\mathbb Q\).  The coefficient fields and local rings used later are
obtained from these parameters by the explicitly stated scalar extensions
and localizations.  No root-of-unity specialization is considered.

\subsection*{The kernel identity and the main result}
Let \(I=I_+\sqcup I_-\), and orient every edge of the Dynkin diagram from
\(I_+\) to \(I_-\). Normalize the height function by \(\xi_i=1\) on
\(I_+\) and \(\xi_i=0\) on \(I_-\). Let
\(\LL=\bigoplus_{i\in I}\Z\alpha_i\) be the finite root lattice. We use the
alternating form
\begin{equation}\label{eq:intro-euler}
 \om(\alpha_i,\alpha_j)=
 \begin{cases}1&i\longrightarrow j,\\-1&j\longrightarrow i,\\0&\text{otherwise}.
 \end{cases}
\end{equation}
Equivalently, \(\om(\alpha,\beta)=\langle\beta,\alpha\rangle_Q-
\langle\alpha,\beta\rangle_Q\). The sign convention is fixed by this formula.

Put \(F=\Q(t_0^{1/2})\), \(u=t^{1/2}\), \(R=F[u^{\pm1}]\), and
\(K=F(u)\). We specialize every positive even toroidal parameter to \(1\)
and every negative even parameter to \(t^{-1}\), keeping \(t_0\). The
resulting skew forms are denoted by \(D_0,D^{(2)}\). We work with the algebra
\(\GG_R(Q)\) generated by the specialized truncated fundamental characters
inside their based torus; this is the embedded realization of the
specialized toroidal Grothendieck ring used throughout the paper.

\begin{theorem}[Kernel decomposition]\label{thm:intro-kernel}
There is a surjective root-degree map from the spectral monomial lattice to
\(\LL\), with kernel generated by the root monomials, such that
\begin{equation}\label{eq:intro-kernel}
 2D^{(2)}(m,n)-D_0(m,n)=\om(\deg_Qm,\deg_Qn).
\end{equation}
Consequently, after adjoining \(w\) with \(w^2=u\), the toroidal product is
the graded twist of the one-parameter product at \(v=t_0u\) given by
\[
 x*y=w^{\om(\alpha,\beta)}(x*_v y)
 \qquad(\deg_Qx=\alpha,\ \deg_Qy=\beta).
\]
The same statement holds for the subalgebras generated by the truncated
fundamental characters of \(\CQ\).
\end{theorem}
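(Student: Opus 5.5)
The plan is to reduce \eqref{eq:intro-kernel} to a computation on generators of the spectral monomial lattice. Both sides are bilinear and skew, so it suffices to check the identity on pairs of spectral variables \(Y_{i,p},Y_{j,s}\). First I will define \(\deg_Q\) on these generators. Write \(\varpi_i\) for the fundamental weight, expressed in \(\LL\otimes\Q\) through the inverse Cartan matrix. In the bipartite normalization \(\xi_i\in\{0,1\}\), the classes \(Y_{i,p}\) that actually occur in \(\CQ\) (and more generally those with \(p\equiv\xi_i\bmod 2\)) admit an integral root-degree. The map sends \(A_{i,p}\) to \(0\) and sends \(Y_{i,\xi_i+2k}\) to an element of \(\LL\) obtained from the Auslander--Reiten positions, namely \(\tau^{k}\)-translates of the projective/injective dimension vectors, as in the Hernandez--Leclerc dictionary. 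Since \(A_{i,p}=Y_{i,p-1}Y_{i,p+1}\prod_{j\sim i}Y_{j,p}^{-1}\), the well-definedness and the claim about the kernel follow from the mesh relations \(\dim\tau^{-1}M+\dim M=\sum_{j\sim i}\dim(\cdot)\) of the AR quiver. Surjectivity follows because simple roots appear as degrees of suitable monomials at the boundary positions.

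Next I will compute the two skew forms. \(D_0\) and \(D^{(2)}\) are built from the Fedele--Hernandez specialized quantum Cartan data \(\widetilde C_{ij}(z)\), with the even parameters specialized as stated. The key observation is the local \(A\)--\(Y\) formula recalled in the introduction: \(A_{i,p}\) quasi-commutes with \(Y_{j,s}\) with an exponent that vanishes unless \(i=j\) and \(|p-s|=1\). It follows that \(2D^{(2)}-D_0\) pairs any root monomial with a \(Y_{i,p}\) to give \(0\). Indeed, the combination \(2D^{(2)}-D_0\) is exactly the one killing the single exponent \(t_0^{\pm1}t^{\pm1/2}\) that appears there, and this is the step that fixes the coefficient \(2\). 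Hence \(2D^{(2)}-D_0\) descends to a skew form on the quotient \(\LL\). A skew form on \(\LL\) is determined by its values on pairs of simple roots. I will evaluate these values on explicit preimages: take \(\alpha_i\) represented by \(Y_{i,\xi_i}\) (up to kernel) for source/sink positions, and read off \(D_0,D^{(2)}\) from the truncated inverse-Cartan coefficients at distance \(0,1\). The resulting value is \(\pm1\) on edges, with sign determined by source/sink, and \(0\) otherwise, which is \(\om\).

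Finally, the twist statement is formal. The toroidal product of based monomials is \(m*n=t_0^{D_0(m,n)}u^{D^{(2)}(m,n)}\cdots\) times \(mn\) (modulo the normalization of \(D_0\) versus \(D^{(2)}\)). The one-parameter product at \(v=t_0u\) uses the form obtained by setting both parameters equal, so the ratio is \(u^{D^{(2)}-D_0/2}=w^{2D^{(2)}-D_0}=w^{\om(\deg m,\deg n)}\). This extends to homogeneous elements because the ratio depends only on the degrees. The subalgebra claim follows because truncated fundamental characters are \(\deg_Q\)-homogeneous: each is a sum of \(Y\)-monomials differing by root monomials, which lie in the kernel.

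The main obstacle is the second step. I expect it to require carefully matching the normalizations of \(D_0\) and \(D^{(2)}\) (half-integers, the factor \(t^{1/2}\)) with the Fedele--Hernandez inverse quantum Cartan expansions. My first attempt would be to verify the kernel property using only the local \(A\)--\(Y\) exponent, and then compute a single edge by a rank-two (type \(A_2\)) calculation, which fixes the sign convention.
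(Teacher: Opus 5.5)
Your outline follows the paper's architecture: the mesh quotient, vanishing of \(\Omega=2D^{(2)}-D_0\) on root monomials, evaluation on a basis, and a monomial-level twist. However, the lattice part has two holes.

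First, the mesh relations only give \(\RR\subseteq\ker\deg_Q\). Your descent step needs the reverse inclusion. \(\Omega\) descends to \(\PP/\RR\), but to treat it as a form on \(\LL\) fixed by finitely many values, you need \(\PP/\RR\to\LL\) to be injective, or at least that your chosen preimages generate \(\PP/\RR\). The paper gets this because each relation \(a_{i,r}\) has coefficient \(1\) on the newest layer. So modulo \(\RR\), every \(e_{i,p}\) is an integral combination of the slice \(e_{i,1}\) (\(i\in I_+\)), \(e_{j,0}\) (\(j\in I_-\)), whose images \(\gamma_i\) form a \(\Z\)-basis of \(\LL\).

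Second, the slice variables are not preimages of simple roots. At a sink, \(\deg_QY_{j,0}=\gamma_j=\alpha_j+\sum_{k\sim j}\alpha_k\), which is a different element of \(\LL\), so it does not represent \(\alpha_j\) up to kernel. The values you would compute, \(\Omega(Y_{i,1},Y_{j,0})=H_{ij}\) and \(0\) within a colour, are correct. But they must be compared with \(\om(\gamma_i,\gamma_j)\), not \(\om(\alpha_i,\alpha_j)\). These agree only after checking that, for two sinks, the contributions of common source neighbours cancel. For a source and a sink, the extra terms vanish because sources are pairwise nonadjacent. Alternatively, use \(Y_{j,0}\prod_{k\sim j}Y_{k,1}^{-1}\) as the preimage of \(\alpha_j\); this leads to the same cancellation.

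The more substantive gap is the final clause about subalgebras. Homogeneity of the truncated characters shows only that the toroidal algebra generated by the specialized classes \(Z_{i,p}\) is the \(\kappa\)-twist of the \(*_v\)-algebra generated by the \emph{same} Laurent sums. The statement compares it with the one-parameter algebra generated by \(\operatorname{tr}_QF_v(Y_{i,p})\). So you need \(Z_{i,p}=\operatorname{tr}_QF_v(Y_{i,p})\); a priori the specialized toroidal coefficients and the \(v\)-coefficients need not agree.

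The paper supplies this in \cref{prop:characters}. For each \(i\), the specialized toroidal screening subalgebra and the one-parameter one are generated by the same homogeneous expressions \(Y_{j,p}^{\pm1}\), \(B_{i,p}\). By the torus twist, their ordered products differ by powers of \(w\), so the two subalgebras coincide as \(K'\)-spaces. Hence the specialization of \(F_\infty(m_+)\) lies in \(\bigcap_i\mathscr K^v_i\) with \(m_+\) as its unique dominant monomial, and the unique-dominant characterization of \(F_v(m_+)\) forces equality.

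Finally, your vanishing step relies on citing the Fedele--Hernandez \(A\)--\(Y\) exponent with the right normalization, which you flag as the main obstacle. The paper settles it directly. It derives \((D_0,D^{(2)})(A_{i,r},Y_{j,s})=\delta_{ij}(\delta_{r-s,1}-\delta_{s-r,1})\cdot(2,1)\) from the telescoped formula \(D^{(2)}(Y_{i,p},Y_{j,s})=\ct_{ij}(p-s-1)-\ct_{ij}(s-p-1)\) and the zero-extended inverse-Cartan recurrence with boundary term \(\delta_{ij}\delta_{m,0}\).
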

The auxiliary extension is used only to express the symmetric, based-monomial
normalization. The final realization is over \(K\), not merely over \(K(w)\).
The character-generated algebra has a well-defined positive root grading,
and its homogeneous dimensions are the Kostant partition numbers. These are
consequences of \cref{thm:intro-kernel}, not assumptions about a putative
categorification of the toroidal coefficients.

For each simple root let \(Z_i\) be the specialized truncated fundamental
class indexed by \(\varphi^{-1}(\alpha_i,0)\). Define the Cartan-type
multiparameter matrix by
\begin{equation}\label{eq:intro-q}
 q_{ii}=t_0^2t,\qquad
 q_{ij}=t_0^{-1},\quad q_{ji}=t_0^{-1}t^{-1}\quad(i\to j),
 \qquad q_{ij}=1\quad(i\ne j,\ i\not\sim j).
\end{equation}
We use the positive-part presentation of Pei--Hu--Rosso \cite{HPR}.
For complementary cocycle-deformation and specialization viewpoints on
multiparameter quantum groups, see \cite{Garcia,GG2026}.
\begin{theorem}[Generic simply-laced realization]\label{thm:intro-main}
For every connected bipartite Dynkin quiver of type
\(A_n,D_n,E_6,E_7\), or \(E_8\), the assignment \(E_i\mapsto Z_i\) induces
an isomorphism
\[
 U_{\mathbf q}^{+}(\mathfrak g)\ \xrightarrow{\ \sim\ }\
 \GG_R(Q)\otimes_RK.
\]
For each source--sink edge \(r\to s\), the local conjugation spectrum is
\[
\Spec_{Z_r}(Z_s)=\{t_0t,t_0^{-1}\}.
\]
\end{theorem}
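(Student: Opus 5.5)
The proof transports the Hernandez--Leclerc isomorphism through the graded twist of \cref{thm:intro-kernel} and then checks the Pei--Hu--Rosso relations directly. First, by \cref{thm:intro-kernel}, after adjoining \(w\) with \(w^2=u\), the algebra \(\GG_R(Q)\otimes_R K(w)\) is the \(\LL\)-graded bicharacter twist, by \(w^{\om(\alpha,\beta)}\), of the one-parameter algebra at \(v=t_0u\). The latter is identified with \(U_v^+(\mathfrak g)\) by \cite[Theorem~6.1]{HL}; we use that \(v\) is generic, since \(t_0^{1/2}\) and \(u\) are algebraically independent. Two consequences follow. The \(\LL_{\ge0}\)-grading is well defined. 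The homogeneous component of degree \(\beta\) has dimension the Kostant partition number \(P(\beta)\), because twisting by a bicharacter preserves graded dimensions. In particular, the degree-\(\alpha_i\) component is spanned by \(Z_i\), and the \(Z_i\) generate.

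Second, we compute the multiparameter structure. The class \(Z_i\) corresponds under the twist to a rescaled \(E_i\) of \(U_v^+\). Hence for a source--sink edge \(r\to s\), the product \(Z_r*Z_s\) is \(w^{\om(\alpha_r,\alpha_s)}=w\) times the untwisted product, and \(Z_s*Z_r\) carries the factor \(w^{-1}\). For the spectrum, consider \(\Ad_{Z_r}(x)=Z_r*x\) composed with inverse right multiplication by \(Z_r\). This is computed in the torus, where \(Z_r\) is a monomial on the relevant support. Its action on \(Z_s\) lies in degree \(\alpha_s\) modulo conjugation invariance, so we may argue with the span of \(Z_rZ_s\) and \(Z_sZ_r\) in degree \(\alpha_r+\alpha_s\). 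That component has dimension \(P(\alpha_r+\alpha_s)=2\).

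The local \(A\)--\(Y\) formula then gives the eigenvalues. Here \(Z_s\) equals \(Y_{s,\cdot}\) plus terms multiplied by \(A^{-1}\), and \(Z_r\) is a monomial \(Y_{r,\cdot}\). Conjugation by \(Z_r\) scales the leading monomial of \(Z_s\) by one scalar and the subleading term, which involves \(A_{r,\cdot}^{-1}\), by another. The two scalars come from \(D_0,D^{(2)}\) together with the single quasi-commutation exponent. Evaluating them gives \(t_0t\) and \(t_0^{-1}\). The two-dimensionality gives the quadratic annihilator \((\Ad_{Z_r}-t_0t)(\Ad_{Z_r}-t_0^{-1})\) on \(Z_s\), so the spectrum is exactly this set. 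The eigenvalues are distinct because \(t\) is generic.

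Third, we derive the relations. The quadratic annihilator expands to the oriented multiparameter Serre relations
\[
Z_r^2Z_s-(q_{rr}^{1/2}\text{-weighted sum})\,Z_rZ_sZ_r+q_{rr}q_{rs}q_{sr}\cdots Z_sZ_r^2=0,
\]
and their counterparts with \(r,s\) swapped. The same relations follow more robustly by twisting the \(v\)-Serre relations of \(U_v^+\): the twist converts \(v^{\pm1}\) coefficients into exactly the products of \(q_{ij}\) in \eqref{eq:intro-q}. One checks that \(q_{rr}=v^2=t_0^2t\) and \(q_{rs}q_{sr}=v^{-2}\), with the split \(q_{rs}=v^{-1}w^{2}\cdot w^{-2}\cdots\) fixed by the twist. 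For nonedges, \(\om=0\) and \(Z_iZ_j=Z_jZ_i\), which matches \(q_{ij}=1\).

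This gives a surjection \(U_{\mathbf q}^+(\mathfrak g)\to\GG_R(Q)\otimes_RK\). Injectivity follows from comparing graded dimensions, since \(U_{\mathbf q}^+\) has PBW dimensions \(P(\beta)\) by \cite{HPR}. Descent from \(K(w)\) to \(K\) holds because the structure constants of the relations lie in \(K\).

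The main obstacle is the second step: verifying that the two conjugation scalars read off from the \(A\)--\(Y\) formula are exactly \(t_0t\) and \(t_0^{-1}\), and not these values multiplied by some power of \(w\). This requires careful bookkeeping of the based-monomial normalization and of the sign convention of \(\om\). I would first test the computation against type \(A_2\), where \cite{FH} provides an explicit check.
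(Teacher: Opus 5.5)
\textbf{There is a genuine gap in your spectral step.} It is precisely the point the paper's local argument is built to handle.

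First, the degree-\(\alpha_r+\alpha_s\) component is two-dimensional and spanned by the two words \(Z_rZ_s\) and \(Z_sZ_r\). It therefore contains no relation between them and gives no annihilator of \(Z_s\) under \(\Ad_{Z_r}\). Its only consequence is that \(Z_rZ_s\) is not a scalar multiple of \(Z_sZ_r\), i.e.\ that at least \emph{two} eigenvalues occur. An upper bound needs three words in a two-dimensional space, namely \(Z_r^2Z_s\), \(Z_rZ_sZ_r\), \(Z_sZ_r^2\) in degree \(2\alpha_r+\alpha_s\) (\cref{cor:smalldim,lem:krylov}).

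Second, your evaluation of the second eigenvalue assumes that every lower monomial of \(Z_s\) carries a single net factor \(A_{r,0}^{-1}\). Nothing justifies this. A monomial \(m_+\prod A_{i,p}^{-v_{i,p}}\) has eigenvalue \(t_0^{1-2\delta}t^{1-\delta}\), where \(\delta=v_{r,0}-v_{r,2}\) is a priori an arbitrary integer (\cref{lem:netwall}). For non-thin characters at a branch vertex you cannot simply inspect the support. That, not the bookkeeping of powers of \(w\), is the real obstacle. The paper resolves it with the Krylov bound over \(K\) plus a block-survival argument at \(u=1\) that forces \(\delta=1\) (\cref{thm:two}). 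For the same reason, the reverse Serre relation is not a ``counterpart with \(r,s\) swapped'' of an annihilator, because \(Z_s\) is not a monomial; the paper needs \cref{lem:reverse} for it.

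\textbf{Your third step closes the gap.} The ``more robust'' alternative you give there actually repairs the spectral step and yields a genuinely different, shorter proof.
\begin{itemize}
\item Under the Hernandez--Leclerc identification at generic \(v\), \(Z_i\mapsto c_iE_i\), since the degree-\(\alpha_i\) component is one-dimensional.
\item By \eqref{eq:twisted-product}, the toroidal words \(Z_r^2Z_s\), \(Z_rZ_sZ_r\), \(Z_sZ_r^2\) are \(w^{2}\), \(1\), \(w^{-2}\) times their \(*_v\)-counterparts. So the \(v\)-Serre relation becomes \eqref{eq:forward}, with coefficients \((1+q_{rr})q_{rs}=t_0t+t_0^{-1}\) and \(q_{rr}q_{rs}^2=t\). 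Your \(q_{rr}q_{rs}q_{sr}\) equals \(1\), so it is not the right last coefficient.
\item Since \(Z_r=Y_{r,1}\) is a monomial (\cref{prop:source-base}), right multiplication by \(Z_r^{-2}\) in the torus gives \((\Ad_{Z_r}-t_0t)(\Ad_{Z_r}-t_0^{-1})Z_s=0\).
\item Independence of Laurent monomials then puts every eigenvalue in \(\{t_0t,t_0^{-1}\}\).
\item Independence of \(Z_rZ_s\) and \(Z_sZ_r\), which is where degree \(\alpha_r+\alpha_s\) legitimately enters, excludes a single eigenvalue.
\end{itemize}
Globally, \cref{lem:cocycle} with \(\sigma=\kappa\), for which \(\kappa_{ij}/\kappa_{ji}=u^{\om(\alpha_i,\alpha_j)}\), shows that the \(\kappa\)-twist of \(U_v^+\) is exactly \(U_{\mathbf q}^+\otimes_KK'\). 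Hence the isomorphism over \(K'\) is the twisted Hernandez--Leclerc isomorphism. Faithfully flat descent along \(K\subset K'\) gives it over \(K\).

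\textbf{Comparison with the paper.} This route needs no Krylov lemma, no special-fiber block survival, no reverse-rigidity lemma and no local PBW lattice; the spectrum becomes a corollary of the global presentation. The paper instead extracts the Cartan data intrinsically from the local \(A\)--\(Y\) spectrum, only afterwards compares with the Pei--Hu--Rosso presentation, and proves injectivity by specialization at \(u=1\). Both routes rest on the Hernandez--Leclerc identification at generic \(v\), which the paper already invokes in \cref{thm:graded-twist}.
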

The rank-one case is included by taking \(q_{11}=t_0^2t\); its positive part
is a polynomial algebra and there is no adjacent-pair assertion.

\subsection*{Method and relationship with cluster structures}
The proof uses a local spectral argument.  Suppose the three cubic words
\(X^2Z\), \(XZX\), and \(ZX^2\) lie in a two-dimensional space and
\(X\) is invertible in the ambient torus.  Then a polynomial of degree at
most two annihilates \(Z\) under \(\Ad_X\).  Monomial eigenvectors make
the spectral bound explicit, and the one-parameter fiber identifies its two
roots.  The reverse Serre relation follows from a second two-dimensional
homogeneous component.  Its proof separates the eigenvalues
\(a^2\), \(ab\), and \(b^2\), without assuming a relation between the
two mixed products of the eigenblocks.

The key extra ingredient is the passage from local \(A\)--\(Y\) rigidity to
the global degree form \(\om\). This supplies the missing grading input and
also determines the apparently free nonadjacent parameters: they are \(1\)
for this bipartite specialization. We give an explicit local bicharacter
construction for the positive part and check the direction of its cocycle
ratio on the three Serre words.

Cluster structures remain part of the setting, rather than an additional
unproved hypothesis. We prove a compatibility identity for homogeneous seeds:
if \(G\widetilde B=0\), then adding the degree-form correction does not
change the exchange-column pairing. Thus a one-parameter compatibility
constant \(2\) yields a second constant \(1\). This result is stated in
terms of homogeneous seed data; we do not need a separate identification of
all toroidal cluster variables with deformed simple classes.

The broader quantum-cluster setting has been developed in
\cite{HLcluster,Bittmann,BittmannJLMS,GLS}.  Quantum Grothendieck-ring
isomorphisms, positivity, and their cluster-theoretic consequences are
studied in \cite{HO,FHOO}; quantum virtual Grothendieck rings and
skew-symmetrizable quantum cluster structures are developed in
\cite{JLO}.  Related quantum cluster structures on quantized coordinate
rings, including the quantum coordinate ring of skew-symmetric matrices,
are studied in \cite{ZT2026}.  Recent work continues to connect quantum
Grothendieck rings with monoidal categorification, deformations, and
shifted quantum affine algebras \cite{DS,FHJan,Paganelli}. These results concern different categories or
quantizations; the present question is the specific Fedele--Hernandez
specialization and its finite-type positive-part realization.  In
particular, our argument does not deduce the realization from a cluster
structure.  It derives the multiparameter Cartan data from the toroidal
commutation forms and the rank-two graded dimensions, and only afterwards
compares the resulting presentation with the multiparameter quantum
group. The quotient
of the spectral weight lattice by root monomials also has an established
representation-theoretic role \cite{CM,FH}; here we use its explicit
bipartite presentation to compute the residual commutation form.

Two structural statements drive the proof.  The Euler-correction identity
identifies the residual toroidal skew direction with the antisymmetric
Euler form of \(Q\).  The local spectral theorem then converts the
rank-two PBW dimension into a quadratic constraint on the adjoint action.
The remaining steps reconstruct the Cartan parameters and globalize the
local relations by a cocycle/PBW specialization argument.  This separates
the local rigidity mechanism from the global presentation.

For ease of reference, \(\GG_R(Q)\) denotes the specialized
character-generated algebra inside the based torus, \(Z_i\) its
distinguished simple-root generators, \(\LL\) the finite root lattice,
and \(\omega_Q\) the antisymmetric Euler form fixed by
\eqref{eq:intro-euler}.  The symbol \(\operatorname{Ad}_X\) always means
conjugation in the ambient based quantum torus; inverse monomials are not
asserted to belong to \(\GG_R(Q)\).

The paper is organized as follows. \Cref{sec:data} fixes the coefficient and
character conventions. \Cref{sec:kernel} proves the lattice quotient and
kernel decomposition. \Cref{sec:twist} establishes the character comparison
and graded dimensions. \Cref{sec:local} proves the spectral and Serre
statements. \Cref{sec:global} constructs the PBW form and proves the generic
isomorphism. Homogeneous seed compatibility and explicit branching examples
are treated in \cref{sec:cluster,sec:examples}.

\section{Spectral lattices and toroidal character algebras}\label{sec:data}
\subsection{Repetition-quiver conventions}
Fix a connected finite simply-laced Dynkin diagram and a bipartite
orientation \(Q\). We distinguish the quiver \(Q\) from its root lattice
\(\LL\). Let \(H\) be the adjacency matrix and \(C=2I-H\) the Cartan
matrix. We write
\[
 \Ih=\{(i,p)\in I\times\Z:p-\xi_i\in2\Z\}.
\]
The additive free abelian lattice with basis \(e_{i,p}\), \((i,p)\in\Ih\),
is denoted by \(\PP\). Its multiplicative notation is the lattice of finite
Laurent monomials in the variables \(Y_{i,p}\).

Let \(s_+=\prod_{i\in I_+}s_i\), \(s_-=\prod_{j\in I_-}s_j\), and
\(\tau=s_+s_-\). Products inside either color commute. With the
Hernandez--Leclerc convention \cite[\S2]{HL},
\begin{equation}\label{eq:gamma}
 \gamma_i=\begin{cases}
 \alpha_i&i\in I_+,\\
 \alpha_i+\displaystyle\sum_{j\sim i}\alpha_j&i\in I_-.
 \end{cases}
\end{equation}
These are the dimension vectors of the indecomposable injectives. They
form a \(\Z\)-basis of \(\LL\). The signed root labeling is
\begin{equation}\label{eq:degree-candidate}
 \beta_{i,p}=\tau^{(\xi_i-p)/2}\gamma_i.
\end{equation}
If \(\varphi(i,p)=(\beta,m)\), then
\(\beta_{i,p}=(-1)^m\beta\). In particular,
\(\IhQ=\varphi^{-1}(\Delta_+\times\{0\})\) is a finite set with
\(|\Delta_+|\) elements. Its column in direction \(i\) starts at
\((i,\xi_i)\) and proceeds downward until the first exit from level zero.
These are the conventions of \cite[\S\S2.2--2.3]{HL}.

The monoidal category \(\CQ\) consists of the finite-dimensional quantum
affine representations whose simple constituents have dominant highest
\(\ell\)-weight monomials supported on \(\IhQ\). The quantum affine
parameter in \(q\)-characters is fixed and is not one of the formal
deformation parameters \(t_0,t\).

\subsection{Root monomials and the inverse quantum Cartan matrix}
For \(r-\xi_i\in1+2\Z\), set
\begin{equation}\label{eq:root-monomial}
 A_{i,r}=Y_{i,r-1}Y_{i,r+1}\prod_{j\sim i}Y_{j,r}^{-1},\qquad
 a_{i,r}=e_{i,r-1}+e_{i,r+1}-\sum_{j\sim i}e_{j,r}.
\end{equation}
Let \(\RR\subset\PP\) be the subgroup generated by the \(a_{i,r}\).
The matrix \(C(z)=(z+z^{-1})I-H\) has inverse
\[
 \ct_{ij}(z)=\sum_{m\ge1}c_{ij}(m)z^m.
\]
Throughout, \(c_{ij}(m)=0\) for \(m\le0\). Symmetry and the recurrence are
\begin{equation}\label{eq:c-recur}
 \begin{aligned}
 c_{ij}(m)&=c_{ji}(m),\qquad c_{ij}(1)=\delta_{ij},\\
 c_{ij}(m+1)+c_{ij}(m-1)-\sum_{k\sim i}c_{kj}(m)&=0
 \qquad(m\ge1).
 \end{aligned}
\end{equation}
In particular \(c(2)=H\). With the zero extension the same Laplacian has
a single boundary term at \(m=0\), namely \(\delta_{ij}\).

In the sign convention of \cite[(11)--(12)]{FH}, the toroidal kernels are
\begin{align}\label{eq:Na}
 \mathcal N_a(i,p;j,s)
 ={}&c_{ij}(p-s-1+a)-c_{ij}(s-p-1+a)\notag\\
 &-c_{ij}(p-s+1+a)+c_{ij}(s-p+1+a).
\end{align}
They are skew-symmetric. If \(x=\sum x_{i,p}e_{i,p}\) and
\(y=\sum y_{j,s}e_{j,s}\), define
\[
 D_a(x,y)=\sum x_{i,p}y_{j,s}\mathcal N_a(i,p;j,s),\qquad
 D^{(2)}=-\sum_{h\ge1}D_{-2h}.
\]
The last sum is finite on any fixed pair: the kernels vanish for
\(a<-|p-s|\). The parity of \(c_{ij}(m)\), as in
\cite[Proposition 2.1]{HL}, implies that odd-indexed toroidal parameters do
not enter products in this parity component.

\subsection{The specialization and based product}
The Fedele--Hernandez parameter quotient imposes, for \(k\ge1\),
\begin{equation}\label{eq:FHrelations}
 t_{-2k-2}^{-1/2}t_{-2k}^{1/2}t_{2k}^{1/2}t_{2k+2}^{-1/2}=1.
\end{equation}
We use the coefficient homomorphism
\begin{equation}\label{eq:specialization}
 t_0^{1/2}\mapsto t_0^{1/2},\qquad
 t_{2h}^{1/2}\mapsto1,\qquad
 t_{-2h}^{1/2}\mapsto u^{-1}\quad(h\ge1).
\end{equation}
Every relation \eqref{eq:FHrelations} maps to \(1\). It is essential that
\emph{all} negative even parameters have the same image; retaining only
\(t_{-2}\) and setting the others to \(1\) is not this specialization.

The based torus \(\TT_R\) is free over \(R\) with basis \(\MM\), and its
multiplication is
\begin{equation}\label{eq:based-product}
 m*n=t_0^{D_0(m,n)/2}u^{D^{(2)}(m,n)}mn.
\end{equation}
The symbol \(mn\) on the right is a commutative Laurent monomial. In all
algebra expressions \(X^2\), \(XYZ\), and inverses refer to the associative
based product unless explicitly indicated otherwise. The commutation scalar
is \(t_0^{D_0(m,n)}t^{D^{(2)}(m,n)}\).

\subsection{Character subalgebras and lower support}
Write \(F_\infty(m_+)\) for the unique-dominant toroidal element of
\cite[Theorem 5.8]{FH}, and \(F_v(m_+)\) for its one-parameter analogue.
The former theorem concerns the intersection of the toroidal screening
subalgebras \cite[\S5.5, Theorem 5.8]{FH}. Their generators in direction
\(i\) are the variables
\(Y_{j,p}^{\pm1}\) for \(j\ne i\), together with
\begin{equation}\label{eq:screening-generator}
 B_{i,p}=Y_{i,p}+Y_{i,p}A_{i,p+1}^{-1}.
\end{equation}
The same expressions generate the one-parameter screening subalgebras.
We use both their intersection description and the unique-dominant
characterization; see \cite[\S5]{FH} and \cite{H}.

\begin{lemma}[Lower support]\label{lem:lower}
Each monomial in \(F_\infty(m_+)\), and in its specialization or
truncation, has the form \(m_+\prod A_{i,r}^{-v_{i,r}}\) with
\(v_{i,r}\in\N\) of finite support. The analogous assertion holds for
\(F_v(m_+)\).
\end{lemma}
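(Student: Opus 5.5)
The plan is to use the intersection description of \(F_\infty(m_+)\) as an element of the intersection over \(i\in I\) of the toroidal screening subalgebras generated by \(Y_{j,p}^{\pm1}\) (\(j\ne i\)) and \(B_{i,p}\), together with the unique-dominant characterization from \cite[Theorem 5.8]{FH}. The argument follows the classical one of Frenkel--Mukhin and Hernandez \cite{H}. The toroidal twist changes only scalar coefficients of based monomials, never the underlying commutative monomials. So support statements can be proved entirely in the commutative lattice \(\PP\), with the product \eqref{eq:based-product} contributing only nonzero scalars in \(R\).

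First I would fix a direction \(i\) and describe the monomials of an element \(\chi\) of the \(i\)-th screening subalgebra. Expanding products of the generators \(B_{i,p}=Y_{i,p}(1+A_{i,p+1}^{-1})\) shows that every such \(\chi\) decomposes into pieces, each attached to an \(i\)-dominant monomial \(m\). Each piece is, up to nonzero scalars, a sum of \(m\) times products of \(A_{i,r}^{-1}\) with nonnegative exponents. This is the rank-one statement: an element of the \(i\)-th subalgebra is a combination of \(\widehat{U}_{q}(\widehat{\mathfrak{sl}}_2)\)-type characters whose monomials lie below their \(i\)-dominant monomials. It uses only that \(A_{i,p+1}\) involves \(Y_{i,\cdot}\) positively and \(Y_{j,\cdot}\) for \(j\ne i\) as commuting invertible coefficients.

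Second, I would run the standard descent. Suppose some monomial \(m\) of \(F_\infty(m_+)\) is not of the form \(m_+\prod A_{i,r}^{-v_{i,r}}\) with \(v\ge0\). I would order monomials by the partial order \(m\le m'\) iff \(m'm^{-1}\) is a product of \(A_{i,r}\) with nonnegative exponents. The key input is that the \(a_{i,r}\) are linearly independent in \(\PP\), which follows from \eqref{eq:c-recur}: \(C(z)\) is invertible, and its inverse is \(\ct(z)\). This independence makes the order antisymmetric and the exponents \(v_{i,r}\) well defined. Among the bad monomials, choose one that is maximal in a suitable sense. I would take the set of bad monomials that are not below any other bad monomial; this set is nonempty because there are finitely many monomials. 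Such an \(m\) must be \(i\)-dominant for every \(i\). Otherwise, by the first step, it lies below an \(i\)-dominant monomial \(m'\) of \(F_\infty(m_+)\). If \(m'\) were good, then \(m\) would be good too; hence \(m'\) is bad, contradicting maximality. So \(m\) is dominant, and by unique dominance \(m=m_+\), which is good. This is a contradiction.

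The main obstacle is to make sure that no cancellations among the toroidal coefficients invalidate the step ``\(m\) appears, hence its \(i\)-dominant piece appears''. I would handle it by working in the fixed parity component where the odd parameters do not enter, and arguing coefficientwise over \(R\), an integral domain. Specialization and truncation only delete monomials or map coefficients, so supports can only shrink. The claim then transfers to the specialized and truncated elements immediately, and the same proof with \(*_v\) gives the assertion for \(F_v(m_+)\).
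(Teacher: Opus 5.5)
Your argument takes a different route from the paper's, and its overall structure is sound. Elements of the based torus are finite Laurent sums, and the order is antisymmetric (already because \(\wt A_{i,r}=\alpha_i\)). A bad monomial not lying below any other bad monomial would have to be \(i\)-dominant for every \(i\), and unique dominance then gives the contradiction. The gap is in your first step, and your proposed remedy for the obstacle you identified does not close it.

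\textbf{The gap.} The descent needs the following: if \(\chi\in\mathscr K_i\) and \(m\) occurs in \(\chi\), then some \(i\)-dominant monomial that \emph{actually occurs in \(\chi\)} lies above \(m\) in the order generated by the \(A_{i,r}\). Expanding products of the generators does not give this. It only shows that \(m\) lies below the head of some ordered product with nonzero coefficient, and heads can cancel between different products. For instance, let \(c\) be the commutation scalar of \(Y_{i,p}\) and \(Y_{i,p+2}\). Then \(B_{i,p}*B_{i,p+2}-c\,B_{i,p+2}*B_{i,p}\in\mathscr K_i\) has no term \(Y_{i,p}Y_{i,p+2}\). This is cancellation inside a sum, not a zero-divisor phenomenon. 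So arguing coefficientwise over an integral domain, or restricting to a parity component, does not address it.

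For the product \eqref{eq:based-product} one can check that in this example the other non-\(i\)-dominant terms cancel as well. What remains is a nonzero multiple of the \(i\)-dominant monomial \(\prod_{j\sim i}Y_{j,p+1}\). But this comes from the specific \(A\)--\(Y\) pairings, not from the expansion.

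\textbf{The missing input.} You need the rank-one structure theorem for each screening subalgebra: Hernandez in the one-parameter case, Fedele--Hernandez \S5 in the toroidal case. It says that \(\mathscr K_i\) is spanned by elements \(F_i(m')\), for \(m'\) \(i\)-dominant. Each \(F_i(m')\) has \(m'\) as its \emph{unique} \(i\)-dominant monomial and has support in \(m'\prod_rA_{i,r}^{-\N}\). Equivalently, a nonzero element of \(\mathscr K_i\) has an \(i\)-dominant monomial. Then \(\chi=\sum\lambda_{m'}F_i(m')\), and the coefficient of \(m'\) in \(\chi\) is exactly \(\lambda_{m'}\). So every nonzero piece has a visible head, and your step goes through.

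\textbf{Comparison with the paper.} The paper avoids this issue altogether. It follows the recursive construction behind Fedele--Hernandez's Theorem~5.8. Each \(i\)-expansion is performed at a monomial already in the cone \(m_+\prod A_{i,r}^{-\N}\) and produces only terms in that cone. The correction steps are linear combinations of such expansions, so induction over the finitely many steps keeps every exponent vector in the cone. That argument needs only the shape of the algorithm: no unique dominance, no maximality or independence argument, and no structural statement about arbitrary elements of \(\mathscr K_i\).

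Your route, once the rank-one theorem is supplied, gives a more intrinsic statement. It proves lower support for \emph{any} element of \(\bigcap_i\mathscr K_i\) with unique dominant monomial \(m_+\), however that element was produced. This matches the unique-dominant characterization the paper uses later in \cref{prop:characters}. The final transfer to specializations and truncations (supports can only shrink) is the same in both proofs.
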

\begin{proof}
In the recursive construction, an \(i\)-expansion at a monomial \(m\)
is assembled from factors \eqref{eq:screening-generator}. Each term has
exponent vector equal to that of \(m\) minus a nonnegative sum of
\(a_{i,r}\). Ordered products change only scalar coefficients. Thus the
cone \(m_+\prod A_{i,r}^{-\N}\) is stable under every expansion at a
monomial already in the cone. The correction steps take linear combinations
of such expansions and cannot introduce a new exponent vector outside it.
Induction over the finite sequence of steps producing a given term proves
the assertion for the recursive construction of \cite[Theorem 5.8]{FH}.
Specialization may cancel terms, and truncation deletes terms; neither
operation creates an exponent vector. The one-parameter proof is identical.
\end{proof}

Let \(\operatorname{tr}_Q\) denote the character truncation of
\cite[\S5.6]{FH}. Define \(\GG_R(Q)\subset\TT_R\) to be the algebra
generated by
\begin{equation}\label{eq:define-G}
 Z_{i,p}:=\operatorname{tr}_Q\left(F_\infty(Y_{i,p})
       \big|_{\eqref{eq:specialization}}\right),\qquad(i,p)\in\IhQ.
\end{equation}
Only variables in \(\IhQ\) occur in this embedded algebra. This definition
fixes the specialization convention independently of any proposed integral
presentation. We abbreviate \(Z_i=Z_{\kappa_i,p_i}\) when
\(\varphi(\kappa_i,p_i)=(\alpha_i,0)\). It remains to prove the grading
and dimension statements needed for the spectral argument.

\section{The spectral lattice and the Euler correction}\label{sec:kernel}
\subsection{The negative-even sum}
The zero extension of the inverse Cartan coefficients is important in the
following identity. No restriction to a boundary pair is needed.
\begin{lemma}[Finite telescoping]\label{lem:telescope}
For \(d=p-s\) one has
\begin{equation}\label{eq:D2closed}
 D^{(2)}(Y_{i,p},Y_{j,s})
       =\ct_{ij}(d-1)-\ct_{ij}(-d-1).
\end{equation}
In particular, when \(p>s\), this equals \(\ct_{ij}(p-s-1)\).
\end{lemma}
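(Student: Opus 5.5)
Unwinding the definitions reduces \eqref{eq:D2closed} to a telescoping sum. By definition,
\[
D^{(2)}(Y_{i,p},Y_{j,s})=-\sum_{h\ge1}\mathcal N_{-2h}(i,p;j,s),
\]
and, writing \(d=p-s\), \eqref{eq:Na} gives
\[
\mathcal N_{-2h}=c_{ij}(d-1-2h)-c_{ij}(-d-1-2h)-c_{ij}(d+1-2h)+c_{ij}(-d+1-2h).
\]
We split this into the two \(d\)-dependent pieces
\[
f(h)=c_{ij}(d+1-2h)-c_{ij}(d-1-2h),\qquad g(h)=c_{ij}(-d+1-2h)-c_{ij}(-d-1-2h),
\]
so that \(\mathcal N_{-2h}=g(h)-f(h)\). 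The sum is finite because \(c_{ij}(m)=0\) for \(m\le0\) under the zero-extension convention, so both \(f(h)\) and \(g(h)\) vanish once \(2h>|d|+1\).

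Each piece telescopes in \(h\). Setting \(\phi(h)=c_{ij}(d+1-2h)\), we have \(f(h)=\phi(h)-\phi(h+1)\), so \(\sum_{h=1}^{H}f(h)=\phi(1)-\phi(H+1)\). For \(H\) large, \(\phi(H+1)=0\), hence \(\sum_{h\ge1}f(h)=c_{ij}(d-1)\). The same argument gives \(\sum_{h\ge1}g(h)=c_{ij}(-d-1)\). Therefore
\[
D^{(2)}(Y_{i,p},Y_{j,s})=\sum_{h\ge1}\bigl(f(h)-g(h)\bigr)=c_{ij}(d-1)-c_{ij}(-d-1),
\]
which is \eqref{eq:D2closed} once \(\ct_{ij}(m)\) is read as the coefficient \(c_{ij}(m)\), consistent with the notation \(\ct_{ij}(z)=\sum_m c_{ij}(m)z^m\).

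For the special case \(p>s\), we have \(d\ge1\), so \(-d-1\le-2\) and \(c_{ij}(-d-1)=0\); only \(c_{ij}(p-s-1)\) remains.

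The main obstacle is sign and bookkeeping. One must match the sign in \(D^{(2)}=-\sum_h D_{-2h}\) against the orientation of the four terms in \eqref{eq:Na}, and make sure the telescoping boundary terms are taken at the correct end, namely \(h=1\), with the tail vanishing. The point to use here is that the zero extension is applied uniformly for \(m\le0\), including \(m=0\), so no separate boundary case is needed.
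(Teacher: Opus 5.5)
Your proposal is correct and is essentially the paper's proof. You split \(-\mathcal N_{-2h}\) into the two \(d\)-dependent differences, telescope each in \(h\), and use the zero extension \(c_{ij}(m)=0\) for \(m\le0\) to kill the terminal terms, leaving \(c_{ij}(d-1)-c_{ij}(-d-1)\), with the second term vanishing when \(d\ge1\).
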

\begin{proof}
Abbreviate \(c(m)=\ct_{ij}(m)\). Substitution of \(a=-2h\) into
\eqref{eq:Na} gives
\begin{align*}
 -\sum_{h=1}^{M}N_{-2h}(i,p;j,s)
 ={}&\sum_{h=1}^{M}\bigl(c(d+1-2h)-c(d-1-2h)\bigr)\\
 &+\sum_{h=1}^{M}\bigl(c(-d-1-2h)-c(-d+1-2h)\bigr)\\
 ={}&c(d-1)-c(d-1-2M)\\
 &+c(-d-1-2M)-c(-d-1).
\end{align*}
Choose \(M\) large enough that both terminal arguments are nonpositive.
All subsequent summands vanish, and both terminal coefficients are zero.
This proves \eqref{eq:D2closed}. The argument includes \(d=0\) and
negative \(d\); the formula is visibly skew-symmetric.
\end{proof}
Define the integral skew form
\begin{equation}\label{eq:Omega}
 \Omega=2D^{(2)}-D_0
\end{equation}
on \(\PP\). We first identify its radical along the root-monomial
relations, then identify the resulting form on the quotient.

\subsection{A presentation of the spectral quotient}
Let \(\RR\subset\PP\) be the subgroup generated by the exponent vectors
\(a_{i,r}\) of the monomials \(A_{i,r}\). Here \(r-\xi_i\) is odd.
\begin{lemma}[The mesh quotient]\label{lem:mesh}
The assignment
\[
 e_{i,p}\longmapsto\beta_{i,p}
       =\tau^{(\xi_i-p)/2}\gamma_i
\]
induces an isomorphism of abelian groups
\begin{equation}\label{eq:meshquotient}
 \PP/\RR\ \xrightarrow{\ \sim\ }\ \LL.
\end{equation}
Consequently \(\deg_Q\) in \eqref{eq:degree-candidate} is surjective,
\(\ker\deg_Q=\RR\), and \(\deg_Q A_{i,r}=0\).
\end{lemma}
\begin{proof}
Write the coordinates of a root as \((x,y)\), with source coordinates
first, and let \(B\) be the source--sink incidence matrix. The products of
commuting reflections in the two colors act by
\[
 s_+(x,y)=(-x+By,y),\qquad s_-(x,y)=(x,-y+B^Tx).
\]
Thus \(\tau=s_+s_-\). The vectors on the initial slice are
\[
 \gamma_i=(e_i,0)\quad(i\in I_+),\qquad
 \gamma_j=(Be_j,e_j)\quad(j\in I_-).
\]
They are a \(\Z\)-basis of \(\LL\), since their change-of-basis matrix
is block triangular with identity diagonal blocks. Direct substitution gives
\begin{align}
 \tau\gamma_i+\gamma_i&=\sum_{j\sim i}\gamma_j
                          &&(i\in I_+),\label{eq:meshsource}\\
 \tau^{-1}\gamma_j+\gamma_j&=\sum_{i\sim j}\gamma_i
                          &&(j\in I_-).\label{eq:meshsink}
\end{align}
Their translates by powers of \(\tau\) are precisely
\[
 \beta_{i,r-1}+\beta_{i,r+1}
                  -\sum_{j\sim i}\beta_{j,r}=0.
\]
Hence \(\RR\) lies in the kernel of the proposed map.

In \(\PP/\RR\), the same recurrence expresses every generator in terms
of the slice \(e_{i,1}\) for \(i\in I_+\) and \(e_{j,0}\) for
\(j\in I_-\). For example, moving downward first determines the source
layer at \(-1\) from the layers at \(1,0\), then the sink layer at
\(-2\), and so on. Moving upward first determines the sink layer at
\(2\), then the source layer at \(3\). At each step the coefficient of
the new layer is \(1\); no division is involved. Thus the slice classes
generate the quotient. Their images are the basis \(\{\gamma_i\}\),
so they have no relation. This proves \eqref{eq:meshquotient}.
\end{proof}

The degree in this lemma is not the ordinary finite weight of a spectral
monomial. In the ordinary weight, \(\wt A_{i,r}=\alpha_i\); in the
\(Q\)-root degree, \(\deg_Q A_{i,r}=0\). The latter is the degree
which assigns a positive root to each fundamental class in \(\CQ\).

\begin{lemma}[Independence of root monomials]\label{lem:Aindependent}
The exponent vectors \(a_{i,r}\) are linearly independent for finite
integer combinations. In particular, a finite expression
\(m=m_+\prod A_{i,r}^{-v_{i,r}}\) has uniquely determined exponents
\(v_{i,r}\).
\end{lemma}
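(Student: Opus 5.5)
The plan is a triangularity argument: order the root-monomial indices \((i,r)\) by the spectral coordinate \(r\) and look at an extreme one. Suppose a finite relation \(\sum_{(i,r)\in S}n_{i,r}a_{i,r}=0\) holds with \(S\) nonempty, finite, and all \(n_{i,r}\ne0\). Let \(r_{\max}\) be the largest spectral index occurring in \(S\), and pick any \(i\) with \((i,r_{\max})\in S\).

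Inspect the coordinate \(e_{i,r_{\max}+1}\) using \eqref{eq:root-monomial}.
\begin{itemize}
\item The vector \(a_{i,r_{\max}}\) contributes \(+1\) to this coordinate, through its term \(e_{i,r+1}\).
\item Any other \(a_{k,r}\) with nonzero \(e_{i,r_{\max}+1}\)-coordinate must satisfy one of three conditions: \(k=i\) and \(r-1=r_{\max}+1\); or \(k=i\) and \(r+1=r_{\max}+1\); or \(k\sim i\) and \(r=r_{\max}+1\).
\item The first and third conditions force \(r>r_{\max}\), which is impossible. The second gives \((k,r)=(i,r_{\max})\) itself.
\end{itemize}
So the \(e_{i,r_{\max}+1}\)-coordinate of the relation equals \(n_{i,r_{\max}}\ne0\). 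This is a contradiction, and hence the \(a_{i,r}\) are linearly independent over \(\Z\), and equally over \(\Q\).

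The uniqueness statement follows at once. If \(m_+\prod A_{i,r}^{-v_{i,r}}=m_+\prod A_{i,r}^{-v'_{i,r}}\) as Laurent monomials, then passing to exponent vectors in \(\PP\) gives \(\sum(v_{i,r}-v'_{i,r})a_{i,r}=0\). This is a finite relation, so every coefficient vanishes.

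The only point needing care is that \(e_{i,r_{\max}+1}\) is a legitimate basis vector of \(\PP\), i.e. that \((i,r_{\max}+1)\in\Ih\). Since \(r_{\max}-\xi_i\) is odd, \(r_{\max}+1-\xi_i\) is even, so this holds. I expect no real obstacle: the argument is purely combinatorial and does not use \cref{lem:mesh}. Note that \cref{lem:mesh} alone would not suffice, because \(\RR\) has infinite rank and one needs independence, not just a description of the quotient.
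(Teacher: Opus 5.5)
Your proof is correct and is essentially the paper's argument. Both take the largest spectral index \(r_{\max}\) in a putative finite relation and read off the \(e_{i,r_{\max}+1}\)-coordinate, noting that any other contributing root monomial would have center \(r_{\max}+1\) or \(r_{\max}+2\). Your additional parity check that \((i,r_{\max}+1)\in\Ih\), and your explicit derivation of the uniqueness of the exponents \(v_{i,r}\), are harmless refinements.
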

\begin{proof}
Suppose \(\sum c_{i,r}a_{i,r}=0\) is a nonzero finite relation and
choose the largest \(r\) with some \(c_{i,r}\ne0\). The coefficient of
\(e_{i,r+1}\) in the relation is \(c_{i,r}\): any other contributing
root monomial would have center \(r+1\) or \(r+2\), both larger than
the chosen maximum. Hence every \(c_{i,r}\) at the maximal level is
zero, a contradiction.
\end{proof}

\subsection{Descent of the commutation form}
\begin{lemma}[The two local boundary values]\label{lem:local-AY}
For each admissible pair one has
\begin{align}
 D_0(A_{i,r},Y_{j,s})
   &=2\delta_{ij}\bigl(\delta_{r-s,1}-\delta_{s-r,1}\bigr),
           \label{eq:A-D0}\\
 D^{(2)}(A_{i,r},Y_{j,s})
   &=\delta_{ij}\bigl(\delta_{r-s,1}-\delta_{s-r,1}\bigr).
           \label{eq:A-D2}
\end{align}
Thus \(\Omega(a_{i,r},e_{j,s})=0\).
\end{lemma}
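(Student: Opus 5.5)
The plan is to reduce both identities to the recurrence \eqref{eq:c-recur} for the zero-extended coefficients \(c_{ij}(m)\). The key observation is that pairing the root monomial \(A_{i,r}\) against \(Y_{j,s}\) turns each kernel into a discrete Laplacian in the first index.

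First I would record the Laplacian in a form valid for every \(m\in\Z\). Put
\[
 L_{ij}(m)=c_{ij}(m+1)+c_{ij}(m-1)-\sum_{k\sim i}c_{kj}(m).
\]
I claim \(L_{ij}(m)=\delta_{ij}\delta_{m,0}\):
\begin{itemize}
\item For \(m\ge1\) this is \eqref{eq:c-recur}.
\item For \(m=0\) only \(c_{ij}(1)=\delta_{ij}\) survives.
\item For \(m\le-1\) every term vanishes by the zero extension.
\end{itemize}
This is the boundary remark made after \eqref{eq:c-recur}.

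Next I expand by bilinearity, using \(a_{i,r}=e_{i,r-1}+e_{i,r+1}-\sum_{k\sim i}e_{k,r}\), and set \(d=r-s\). For \(D^{(2)}\) I use \cref{lem:telescope}. Each term \(e_{\ell,p}\) contributes \(c_{\ell j}(p-s-1)-c_{\ell j}(s-p-1)\). Collecting terms gives
\[
 D^{(2)}(A_{i,r},Y_{j,s})
 =\bigl[c_{ij}(d)+c_{ij}(d-2)-\textstyle\sum_{k\sim i}c_{kj}(d-1)\bigr]
 -\bigl[c_{ij}(-d)+c_{ij}(-d-2)-\textstyle\sum_{k\sim i}c_{kj}(-d-1)\bigr].
\]
This equals \(L_{ij}(d-1)-L_{ij}(-d-1)\). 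By the boundary identity it is \(\delta_{ij}(\delta_{d,1}-\delta_{d,-1})\), which is \eqref{eq:A-D2}.

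For \(D_0\) I substitute \(a=0\) in \eqref{eq:Na}. Each of the four summands \(c_{\ell j}(\pm(p-s)\pm1)\), summed against \(a_{i,r}\), assembles into a Laplacian. The result is
\[
 D_0(A_{i,r},Y_{j,s})=L_{ij}(d-1)-L_{ij}(-d-1)-L_{ij}(d+1)+L_{ij}(-d+1).
\]
Only the arguments equal to \(0\) contribute:
\begin{itemize}
\item For \(d=1\), the first and last terms each give \(\delta_{ij}\), so the total is \(2\delta_{ij}\).
\item For \(d=-1\), the two middle terms each give \(-\delta_{ij}\), so the total is \(-2\delta_{ij}\).
\item For every other \(d\) the expression vanishes.
\end{itemize}
This is \eqref{eq:A-D0}.

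Finally, \(\Omega(a_{i,r},e_{j,s})=2D^{(2)}-D_0=0\) follows immediately by subtracting the two formulas.

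The only delicate point is bookkeeping. One must check that the signs and shifts in \eqref{eq:Na} really group into the four Laplacians listed above. One must also use the zero extension for nonpositive arguments, rather than the recurrence alone, so that the cases \(|d|\le1\), including \(d=0\), are handled uniformly with no separate boundary analysis. Parity of \(r-\xi_i\) odd versus \(s-\xi_j\) even guarantees \(d\) is odd, but the argument does not need this.
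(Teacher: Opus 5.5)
Your proof is correct and is essentially the paper's argument: you extend the inverse-Cartan recurrence to all $m\in\mathbb Z$ with the single boundary term $\delta_{ij}\delta_{m,0}$, expand $a_{i,r}$ against the closed form of \cref{lem:telescope} to obtain $L_{ij}(d-1)-L_{ij}(-d-1)$, and expand it against $\mathcal N_0$ to obtain two copies of that same boundary difference. The only slip is your closing aside: when $i$ and $j$ have different colors, $d=r-s$ is even (for instance $d=0$ occurs), not odd, but as you say, nothing in the argument depends on this parity.
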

\begin{proof}
Put \(c_{ij}(m)=\ct_{ij}(m)\). With the zero extension, the inverse
Cartan recurrence may be written at all integer arguments as
\begin{equation}\label{eq:delta-mesh}
 c_{ij}(m+1)+c_{ij}(m-1)-\sum_{k\sim i}c_{kj}(m)
                  =\delta_{ij}\delta_{m,0}.
\end{equation}
For \(m\ge1\) this is the recurrence, for \(m=0\) it is
\(c_{ij}(1)=\delta_{ij}\), and for \(m<0\) both sides are zero.
Expand the first argument \(A_{i,r}\) in \eqref{eq:D2closed}. The
four root-monomial contributions form \eqref{eq:delta-mesh} at
\(m=r-s-1\) and at \(m=s-r-1\), with opposite signs. The result is
\eqref{eq:A-D2}. Applying the same expansion to \(N_0\) in
\eqref{eq:Na} gives two copies of this boundary difference, proving
\eqref{eq:A-D0}. These formulas also agree with the specialized
\(A\)--\(Y\) relation of \cite[\S7]{FH}; the computation fixes the
ordered-pair signs without an implicit orientation convention.
\end{proof}

\begin{theorem}[Euler correction identity]\label{thm:kernel}
For all finite Laurent monomials \(m,n\),
\begin{equation}\label{eq:kernel-main}
 2D^{(2)}(m,n)-D_0(m,n)=\om(\deg_Qm,\deg_Qn).
\end{equation}
\end{theorem}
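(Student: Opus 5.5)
The plan is to treat both sides of \eqref{eq:kernel-main} as integral skew-symmetric bilinear forms on \(\PP\). The right-hand side is \(\om\) pulled back along \(\deg_Q\), and the left-hand side is \(\Omega\) from \eqref{eq:Omega}. We show first that \(\Omega\) descends to \(\PP/\RR\), and then compare the two forms on a basis of the quotient.

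\emph{Step 1: \(\RR\) lies in the radical of \(\Omega\).} By \cref{lem:local-AY}, \(\Omega(a_{i,r},e_{j,s})=0\) for every generator \(a_{i,r}\) of \(\RR\) and every basis vector \(e_{j,s}\) of \(\PP\). By bilinearity, \(\Omega(\RR,\PP)=0\). The kernels \(\mathcal N_a\) are skew-symmetric, and \eqref{eq:D2closed} is visibly skew, so \(\Omega\) is skew and also \(\Omega(\PP,\RR)=0\). Hence \(\Omega\) factors through \(\PP/\RR\). By \cref{lem:mesh} this quotient is identified with \(\LL\) via \(\deg_Q\). So there is a unique skew form \(\overline\Omega\) on \(\LL\) with \(\Omega(m,n)=\overline\Omega(\deg_Qm,\deg_Qn)\). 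It remains to prove \(\overline\Omega=\om\).

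\emph{Step 2: evaluation on the slice basis.} By \cref{lem:mesh}, the classes of \(e_{i,1}\) (\(i\in I_+\)) and \(e_{j,0}\) (\(j\in I_-\)) map to the \(\Z\)-basis \(\{\gamma_i\}\) of \(\LL\). So it suffices to check \(\Omega=\om\circ(\deg_Q,\deg_Q)\) on pairs of slice vectors. I will compute \(D^{(2)}\) by \cref{lem:telescope} and \(D_0\) directly from \eqref{eq:Na} with \(a=0\), using the zero extension and \(c(1)=I\), \(c(2)=H\).

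\emph{Same colour} (\(d=p-s=0\)). Here \(D^{(2)}=c(-1)-c(-1)=0\). Also \(\mathcal N_0=c(-1)-c(-1)-c(1)+c(1)=0\), so \(\Omega=0\). On the other side we compute \(\om(\gamma_i,\gamma_{i'})\):
\begin{itemize}
\item For two sources, \(\om(\gamma_i,\gamma_{i'})=\om(\alpha_i,\alpha_{i'})=0\), since there are no source–source edges.
\item For two sinks \(j,k\), expand \(\om(\alpha_j+\sum_{i\sim j}\alpha_i,\ \alpha_k+\sum_{l\sim k}\alpha_l)\). The only nonzero terms are \(\om(\alpha_j,\alpha_l)=-1\) for each common neighbour \(l\) of \(j\) and \(k\), and \(\om(\alpha_i,\alpha_k)=+1\) for each common neighbour \(i\). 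These cancel, giving \(0\).
\end{itemize}

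\emph{Mixed pair} (source \(i\) at \(p=1\), sink \(j\) at \(s=0\), so \(d=1\)). Here \(D^{(2)}=c(0)-c(-2)=0\). Also \(\mathcal N_0=c(0)-c(-2)-c(2)+c(0)=-H_{ij}\). Hence \(\Omega(e_{i,1},e_{j,0})=H_{ij}\). On the other side,
\[
\om(\gamma_i,\gamma_j)=\om\Bigl(\alpha_i,\alpha_j+\sum_{l\sim j}\alpha_l\Bigr)=\om(\alpha_i,\alpha_j)=H_{ij},
\]
because the \(\alpha_l\) are sources and pair trivially with \(\alpha_i\), while \(i\to j\) exactly when \(H_{ij}=1\). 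The reversed mixed pair then follows from skew-symmetry of both forms.

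Thus \(\overline\Omega=\om\) on a basis, and \eqref{eq:kernel-main} follows by bilinearity.

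The main obstacle I expect is in Step 1, and in the signs of Step 2. One must make sure that \cref{lem:local-AY} genuinely applies to every pair \((a_{i,r},e_{j,s})\), including those far apart, where the zero-extension boundary term \eqref{eq:delta-mesh} is what kills the contribution. One must also keep the ordered-pair sign convention of \eqref{eq:Na} consistent with \eqref{eq:intro-euler}. If the mixed-pair check produced \(-H_{ij}\), that would signal a convention mismatch. I would recheck it against the computation of \(D_0(A_{i,r},Y_{j,s})\) in \cref{lem:local-AY}, rather than alter the statement.
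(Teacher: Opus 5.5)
Your proposal is correct and is essentially the paper's own proof: you show $\RR$ lies in the radical of $\Omega=2D^{(2)}-D_0$ using \cref{lem:local-AY} together with skew-symmetry, descend to $\PP/\RR\cong\LL$ via \cref{lem:mesh}, and then match $\Omega$ with $\om$ on the slice basis $\{\gamma_i\}$. Your mixed-pair signs agree with the paper: $\mathcal N_0=-\ct_{ij}(2)=-H_{ij}$ gives $\Omega(e_{i,1},e_{j,0})=H_{ij}=\om(\gamma_i,\gamma_j)$, so the convention mismatch you worried about does not occur.
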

\begin{proof}
By \cref{lem:local-AY}, \(\Omega(a_{i,r},e_{j,s})=0\) for every
root-monomial generator and every spectral basis vector. Skew-symmetry
then gives \(\Omega(e_{j,s},a_{i,r})=0\). Hence \(\RR\) lies in the
radical in both variables, so \(\Omega\) descends to a well-defined skew
form on \(\PP/\RR\). By \cref{lem:mesh} this quotient is \(\LL\). It is
therefore enough to compute the descended form on the slice whose images
are \(\gamma_i\). If both vertices belong to the same color,
their slice positions agree, so \(D_0=D^{(2)}=0\). If \(i\in I_+\)
and \(j\in I_-\), then the slice positions are \(1,0\). Formula
\eqref{eq:D2closed} gives \(D^{(2)}=0\), while \eqref{eq:Na} gives
\[
 D_0(Y_{i,1},Y_{j,0})=-\ct_{ij}(2)=-B_{ij}.
\]
Thus \(\Omega(Y_{i,1},Y_{j,0})=B_{ij}\).

The Euler form \(\om\) has the same values on the \(\gamma\)-basis.
Indeed, \(\gamma_i=\alpha_i\) at a source and
\(\gamma_j=\alpha_j+\sum_{k\sim j}\alpha_k\) at a sink, so
\(\om(\gamma_i,\gamma_j)=B_{ij}\). For two sinks, the two sums over
common source neighbors occur with opposite signs and cancel. For two
sources the value is zero. Hence the two descended forms agree on a basis,
which proves the theorem.
\end{proof}
This proves the lattice and bilinear-form assertions of
\cref{thm:intro-kernel}. Notice that the argument uses the bipartite mesh,
not the absence of a trivalent vertex. It is uniform for finite ADE.

\begin{remark}[Dependency check]\label{rem:no-circularity}
The proof of \cref{thm:kernel} is entirely at the level of the spectral
lattice, the inverse quantum Cartan recurrence, and the bipartite
root-lattice quotient. Neither the graded-dimension theorem nor the
multiparameter realization proved later is used here. Thus the Euler
correction can be used subsequently to establish the grading and the
rank-two dimensions without circularity.
\end{remark}

\section{Character algebras as graded twists}\label{sec:twist}
\subsection{The moving parameter and the auxiliary extension}
Introduce \(w\) with \(w^2=u\), and write
\[
 R'=F[w^{\pm1}],\qquad K'=F(w),\qquad
 v=t_0u,\qquad v^{1/2}=t_0^{1/2}w.
\]
The extension \(R\subset R'\) is finite free of rank two. We denote by
\(*_v\) the ordinary one-parameter based-torus product
\[
 m*_v n=v^{D_0(m,n)/2}\,mn.
\]
For homogeneous degrees define a bicharacter
\begin{equation}\label{eq:kappa}
 \kappa(\alpha,\beta)=w^{\om(\alpha,\beta)}.
\end{equation}
For any \(\LL\)-graded algebra \(A\), its \(\kappa\)-twist has the
same graded module and multiplication
\(x\circ y=\kappa(\alpha,\beta)xy\) for homogeneous \(x,y\).
Bilinearity of \(\om\) gives
\[
 \kappa(\alpha,\beta)\kappa(\alpha+\beta,\gamma)
 =\kappa(\beta,\gamma)\kappa(\alpha,\beta+\gamma),
\]
so the new product is associative.

\begin{proposition}[Based-torus comparison]\label{prop:torus-twist}
Under the identity on commutative Laurent monomials, the toroidal torus
over \(R'\) is the \(\kappa\)-twist of the one-parameter torus at \(v\).
Explicitly, for homogeneous elements \(x,y\) of degrees \(\alpha,\beta\),
\begin{equation}\label{eq:twisted-product}
 x*y=w^{\om(\alpha,\beta)}(x*_v y).
\end{equation}
\end{proposition}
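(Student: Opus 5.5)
The plan is to reduce the comparison to commutative Laurent monomials and then extend bilinearly. Both products are \(R'\)-bilinear. The torus \(\TT_R\otimes_RR'\) is free on the basis \(\MM\), and every monomial \(m\) is homogeneous of degree \(\deg_Qm\) by \cref{lem:mesh}. A homogeneous element of degree \(\alpha\) is therefore an \(R'\)-combination of monomials \(m\) with \(\deg_Qm=\alpha\). So it suffices to prove \eqref{eq:twisted-product} for \(x=m\), \(y=n\) monomials with \(\deg_Qm=\alpha\) and \(\deg_Qn=\beta\). The general case then follows by expanding \(x\) and \(y\), since the scalar \(w^{\om(\alpha,\beta)}\) is the same for every pair of summands.

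For monomials, I would compare scalars directly. By \eqref{eq:based-product},
\[
 m*n=t_0^{D_0(m,n)/2}u^{D^{(2)}(m,n)}\,mn
 =t_0^{D_0(m,n)/2}w^{2D^{(2)}(m,n)}\,mn,
\]
and, using \(v^{1/2}=t_0^{1/2}w\),
\[
 m*_vn=v^{D_0(m,n)/2}\,mn=t_0^{D_0(m,n)/2}w^{D_0(m,n)}\,mn .
\]
Here \(D_0(m,n)\) is an integer, so \(v^{D_0/2}=(v^{1/2})^{D_0}\) is an honest element of \(R'\). This is exactly why the square root \(w\) of \(u\) is needed. The ratio of the two scalars is \(w^{2D^{(2)}(m,n)-D_0(m,n)}\). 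By \cref{thm:kernel} this equals \(w^{\om(\deg_Qm,\deg_Qn)}=\kappa(\alpha,\beta)\). Hence \(m*n=\kappa(\alpha,\beta)(m*_vn)\), which is the monomial case.

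Finally I would record that the twist is well defined. The one-parameter torus is \(\LL\)-graded via \(\deg_Q\), because \(\deg_Q(mn)=\deg_Qm+\deg_Qn\) and \(mn\) is a single monomial. The cocycle identity displayed before the proposition shows that the twisted product is associative, so the identification is an isomorphism of \(R'\)-algebras.

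The only real input is the kernel identity, \cref{thm:kernel}. The step needing care is the sign and normalization bookkeeping. First, \(u^{D^{(2)}}\) must be rewritten as \(w^{2D^{(2)}}\) and not as \(w^{D^{(2)}}\). Second, the half-integer power \(t_0^{D_0/2}\) must cancel exactly between the two products, which requires choosing the square root \(v^{1/2}=t_0^{1/2}w\) consistently. Third, the order of arguments in \(\om\) must match the ordered pair \((m,n)\) in \(D_0\) and \(D^{(2)}\), which is fixed by \eqref{eq:kernel-main}.
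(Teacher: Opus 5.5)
Your proposal is correct and follows essentially the same route as the paper: you compare the monomial scalars \(t_0^{D_0/2}w^{2D^{(2)}}\) and \(t_0^{D_0/2}w^{D_0}\), identify their ratio \(w^{2D^{(2)}-D_0}\) with \(w^{\omega_Q(\alpha,\beta)}\) via the Euler correction identity, and extend bilinearly using homogeneity so that the same scalar applies to every monomial pair. Your remarks on the \(\deg_Q\)-grading and associativity of the twist are harmless additions; the paper records them just before the proposition.
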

\begin{proof}
For monomials the toroidal scalar in \eqref{eq:based-product} is
\(t_0^{D_0/2}w^{2D^{(2)}}\), whereas the one-parameter scalar is
\(t_0^{D_0/2}w^{D_0}\). Their quotient is
\(w^{2D^{(2)}-D_0}=w^{\om(\alpha,\beta)}\) by
\cref{thm:kernel}. Extend by bilinearity. Homogeneity is needed so that the
same scalar multiplies every monomial pair in the two elements.
\end{proof}
The fourth-root extension of the original parameter \(t\) is not part of
the statement of the final isomorphism. It is a device for comparing the
symmetric based products; \cref{sec:global} works over \(K\) itself.

\subsection{Screening generators and fundamental characters}
\begin{proposition}[Character comparison]\label{prop:characters}
After the parameter substitution and scalar extension to \(K'\),
\begin{equation}\label{eq:characters}
 F_\infty(m_+)\big|_{\eqref{eq:specialization}}=F_v(m_+)
\end{equation}
as commutative-monomial expansions. Both sides are homogeneous of degree
\(\deg_Qm_+\). In particular,
\begin{equation}\label{eq:trunc-character}
 Z_{i,p}=\operatorname{tr}_Q F_v(Y_{i,p})
 \qquad((i,p)\in\IhQ)
\end{equation}
after the same scalar extension.
\end{proposition}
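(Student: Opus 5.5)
The plan is to avoid comparing the recursive constructions term by term. Instead I will show that the specialized toroidal element lies in the one-parameter screening intersection and has the same dominant data as \(F_v(m_+)\). The one-parameter uniqueness theorem then forces the equality \eqref{eq:characters}.

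\emph{Step 1: homogeneity of the screening generators.} In direction \(i\) the toroidal screening subalgebra is generated by \(Y_{j,p}^{\pm1}\) for \(j\ne i\) and by \(B_{i,p}=Y_{i,p}+Y_{i,p}A_{i,p+1}^{-1}\). By \cref{lem:mesh}, \(\deg_QA_{i,p+1}=0\), so \(B_{i,p}\) is homogeneous of degree \(\beta_{i,p}\). Every generator is therefore \(\LL\)-homogeneous.

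\emph{Step 2: transporting membership.} After applying \eqref{eq:specialization} and extending scalars to \(R'\), an element of the specialized toroidal screening subalgebra is an \(R'\)-combination of ordered \(*\)-products of these homogeneous generators. Iterating \cref{prop:torus-twist}, each ordered \(*\)-product of homogeneous factors of degrees \(\alpha_1,\dots,\alpha_k\) equals
\[
w^{\sum_{a<b}\om(\alpha_a,\alpha_b)}
\]
times the corresponding \(*_v\)-product. So, as sets of commutative-monomial expansions over \(K'\), the specialized toroidal screening subalgebra in direction \(i\) is contained in the one-parameter screening subalgebra at \(v=t_0u\). The reverse inclusion holds by the inverse twist, although it is not needed.

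Specialization is a ring map, so the image of \(F_\infty(m_+)\) lies in the image of each screening subalgebra. Hence \(F_\infty(m_+)|_{\eqref{eq:specialization}}\) lies in the intersection over \(i\) of the one-parameter screening subalgebras over \(K'\). I will only use membership of the image and never an intersection-commutes-with-specialization statement, which would be delicate.

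\emph{Step 3: dominant data and uniqueness.} By \cref{lem:lower}, every monomial of the specialized element has the form \(m_+\prod A_{i,r}^{-v_{i,r}}\). Specialization only changes or cancels coefficients, so the specialized element has no dominant monomial other than \(m_+\). The coefficient of \(m_+\) is \(1\), and it survives specialization.

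Since \(t_0^{1/2}\) and \(u\) are algebraically independent, \(v=t_0u\) is generic. The one-parameter theorem (\cite{H}, \cite[\S5]{FH}), after the flat base change to \(K'\), says an element of the screening intersection is determined by its dominant monomials and their coefficients. This yields \eqref{eq:characters}.

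Homogeneity of degree \(\deg_Qm_+\) then follows from \cref{lem:lower} together with \(\deg_QA_{i,r}=0\). Finally, \(\operatorname{tr}_Q\) deletes monomials according to their support only, so it commutes with the identification and gives \eqref{eq:trunc-character}.

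\emph{Main obstacle.} I expect the hardest step to be the uniqueness input in Step 3 over the extended field \(K'\) with the based product \(*_v\). One must check that Hernandez's characterization is stated, or remains valid, for the symmetric based normalization and after adjoining \(w\). My first attempt is to note that the screening subalgebras are defined over \(\Q(v^{1/2})\subset K'\). The intersection then commutes with the field extension \(K'\supset\Q(v^{1/2})\), since this extension is free and linear conditions are preserved, which reduces the claim to the classical statement.
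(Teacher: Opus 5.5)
Your proposal is correct and follows essentially the same route as the paper. Both arguments use four ingredients: homogeneity of the screening generators, the $w$-power twist transporting screening-subalgebra membership to the one-parameter side over $K'$ (you need and prove only one inclusion), survival of $m_+$ as the unique dominant monomial, and the one-parameter unique-dominant characterization. Your reduction of that characterization to $\Q(v^{1/2})$, using that intersections of subspaces commute with a field extension, is a more explicit justification than the paper's one-line triangularity remark. One point should be stated explicitly: the absence of other dominant monomials comes from the defining unique-dominant property of $F_\infty(m_+)$, since specialization cannot create exponent vectors, and not from \cref{lem:lower} alone.
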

\begin{proof}
Every term of a character has the same \(Q\)-degree by
\cref{lem:lower,lem:mesh}.  The two summands of
\[
B_{i,p}=Y_{i,p}+Y_{i,p}A_{i,p+1}^{-1}
\]
have the same degree, and the remaining screening generators are
homogeneous Laurent variables.

Fix \(i\).  Let \(\mathscr K^{\mathrm{tor}}_i\) be the specialized
toroidal screening subalgebra and \(\mathscr K^v_i\) the corresponding
one-parameter screening subalgebra.  Both are generated by the same
homogeneous commutative-monomial expressions.  For any ordered word
\(G_1,\ldots,G_\ell\) in these homogeneous generators,
\cref{prop:torus-twist} gives
\[
G_1*\cdots*G_\ell
=
w^{c(G_1,\ldots,G_\ell)}
\bigl(G_1*_v\cdots*_vG_\ell\bigr)
\]
for an integer \(c(G_1,\ldots,G_\ell)\).  The factor on the right is a
unit of \(K'\).  Hence the identity on commutative Laurent expansions
identifies the two screening subalgebras as graded \(K'\)-vector spaces:
\[
\mathscr K^{\mathrm{tor}}_i=\mathscr K^v_i
\quad\text{inside the common Laurent-monomial vector space.}
\tag{4.12}
\]
This assertion is made separately for each \(i\); no base-change
interchange with an infinite intersection is being used.

Now let
\[
\overline F_\infty(m_+)
=
F_\infty(m_+)\big|_{\eqref{eq:specialization}}.
\]
Since \(F_\infty(m_+)\) belongs to every toroidal screening subalgebra,
(4.12) shows directly that
\[
\overline F_\infty(m_+)\in\bigcap_i\mathscr K^v_i.
\]
Its coefficient of \(m_+\) remains \(1\).  Before specialization
\(m_+\) is the only dominant exponent vector in \(F_\infty(m_+)\);
specialization can cancel existing terms but cannot create a new exponent
vector.  Thus \(m_+\) remains the unique dominant monomial of
\(\overline F_\infty(m_+)\).

The one-parameter screening intersection has the basis
\(\{F_v(m)\}\), characterized triangularly by dominant monomials
\cite{H,FH}.  This characterization survives extension of scalars to
\(K'\): if an element of the intersection has exactly one dominant
monomial \(m_+\) with coefficient \(1\), subtracting \(F_v(m_+)\) leaves
an element with no dominant monomial, hence zero.  Therefore
\[
\overline F_\infty(m_+)=F_v(m_+).
\]
Applying the same linear truncation to both sides gives
\eqref{eq:trunc-character}.
\end{proof}

\begin{remark}\label{rem:basechange-intersection}
The proof deliberately avoids asserting a formal identity
\[
(\bigcap_i\mathscr K_{i,\infty})\otimes K'
=
\bigcap_i(\mathscr K_{i,\infty}\otimes K'),
\]
which would require a separate flatness argument for intersections.
Instead, it follows one already constructed element through each screening
subalgebra and then invokes the one-parameter unique-dominant
characterization.  It also uses only exponent support and screening
membership; toroidal coefficients are never interpreted as ordinary
composition multiplicities.
\end{remark}

\begin{theorem}[The graded character algebra]\label{thm:graded-twist}
After extension to \(K'\), the algebra \(\GG_R(Q)\) is the graded
\(\kappa\)-twist of the truncated one-parameter algebra
\(\mathcal K_v(\CQ)\). Its positive root grading is induced by
\(\deg_Q\), and
\begin{equation}\label{eq:graded-dim}
 \dim_K\bigl(\GG_R(Q)\otimes_RK\bigr)_\gamma=p(\gamma)
 \qquad(\gamma\in\LL_+),
\end{equation}
where \(p(\gamma)\) is the Kostant partition number.
\end{theorem}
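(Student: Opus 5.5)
The plan is to transport the one-parameter results to the toroidal side through \cref{prop:torus-twist} and \cref{prop:characters}.

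\textbf{Step 1: grading.} By \cref{prop:characters}, after extension to \(K'\) each generator \(Z_{i,p}\), \((i,p)\in\IhQ\), coincides as a commutative-monomial expansion with \(\operatorname{tr}_Q F_v(Y_{i,p})\). Truncation deletes monomials and so preserves \(\deg_Q\)-homogeneity. By \cref{lem:lower,lem:mesh}, each such generator is therefore homogeneous of degree \(\beta=\deg_Q Y_{i,p}\), which lies in \(\Delta_+\) because \((i,p)\in\IhQ\). The based torus is \(\LL\)-graded by \(\deg_Q\), and the subalgebra generated by homogeneous elements inherits this grading. Every nonzero component sits in a sum of positive roots, so it lies in \(\LL_+\). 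This grading is induced by \(\deg_Q\), as claimed.

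\textbf{Step 2: twist identification.} Let \(A_v=\mathcal K_v(\CQ)\otimes K'\) be the subalgebra of the one-parameter torus generated by the same elements under \(*_v\). A word \(G_1*\cdots*G_\ell\) in homogeneous generators equals \(w^{c}(G_1*_v\cdots*_vG_\ell)\) by \cref{prop:torus-twist}, and \(w^c\) is a unit. So the \(K'\)-spans of words agree, and the two subalgebras coincide as graded subspaces of the common Laurent-monomial space. On this common space the toroidal product is \(\kappa\)-twisted \(*_v\) on homogeneous pieces, which is exactly the graded \(\kappa\)-twist.

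\textbf{Step 3: dimensions.} A graded twist preserves homogeneous components, so \(\dim_{K'}(\GG_R(Q)\otimes K')_\gamma=\dim_{K'}(A_v)_\gamma\). For the one-parameter side I would invoke Hernandez--Leclerc \cite[Theorem~6.1]{HL}. At generic \(v=t_0u\), the truncated quantum Grothendieck ring of \(\CQ\) is isomorphic to \(U_v^+(\mathfrak g)\), with \(\deg_Q\) matching the root grading (fundamental classes \(\mapsto\) dual PBW generators of root \(\beta\)). Its graded dimensions are therefore \(p(\gamma)\) by PBW. One also needs the truncation to be injective on the subalgebra, which is part of the HL theorem.

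\textbf{Step 4: descent from \(K'\) to \(K\).} \(\GG_R(Q)\) is generated over \(R\) inside the free \(R\)-module \(\TT_R\), and each graded piece is an \(R\)-submodule of a free module. Since \(K\subset K'\) is a field extension, \(\dim_K(V\otimes_RK)_\gamma=\dim_{K'}(V\otimes_RK')_\gamma\). This holds because the \(K\)-span of finitely many vectors in \(\TT_R\otimes K\) has the same dimension after \(\otimes_KK'\).

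\textbf{Main obstacle.} The delicate point is Step 3: matching the HL degree with \(\deg_Q\), and checking that the truncated algebra at the transcendental parameter \(v=t_0u\) over \(F(u)\) (rather than a formal \(q\)) still has Kostant dimensions. I would handle this by noting that the HL isomorphism is defined over \(\Z[v^{\pm1/2}]\) with free PBW-type bases, so it specializes to any \(v\) that is not a root of unity. I would then verify on the slice \(\gamma_i\) that the HL root assignment agrees with \(\beta_{i,p}\) from \eqref{eq:degree-candidate}.
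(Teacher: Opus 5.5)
Your proposal is correct and follows the paper's proof essentially step for step. The steps are: homogeneous generators of positive-root degree; ordered words that differ only by units \(w^{c}\), so the graded \(K'\)-spans coincide; Kostant dimensions from Hernandez--Leclerc's Theorem 6.1; and invariance of dimension under \(K\subset K'\). The only place you do more than needed is your "main obstacle" paragraph. Since \(v^{1/2}=t_0^{1/2}w\) is transcendental over \(\Q\), the paper simply base-changes along the field embedding \(\Q(v^{1/2})\hookrightarrow K'\), and no specialization of an integral form to a non-root-of-unity value is required.
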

\begin{proof}
The two algebras are generated by the same truncated fundamental
expressions by \cref{prop:characters}. These generators are homogeneous
and their products differ by the factors in \eqref{eq:twisted-product}.
Thus their underlying graded vector spaces over \(K'\) agree. All
generator degrees are positive roots, because \((i,p)\in\IhQ\).
Hernandez--Leclerc identify the one-parameter algebra, over
\(\Q(v^{1/2})\), with the quantized positive coordinate algebra and
hence with the positive enveloping algebra after extension of scalars
\cite[Theorem 6.1]{HL}. Their identification sends the fundamental
classes to rescaled dual PBW generators of their root degree. It follows
that the dimension of a homogeneous component is \(p(\gamma)\).
The embedding in the based torus respects degree already over \(R\),
and extension from \(K\) to \(K'\) does not change vector-space
dimensions. This gives \eqref{eq:graded-dim} over \(K\).
\end{proof}

\subsection{A local standard basis and the special fiber}
For specialization arguments use
\begin{equation}\label{eq:localring}
 \OO=F[u^{\pm1}]_{(u-1)},\qquad \OO'=\OO\otimes_RR'.
\end{equation}
The ring \(\OO\) is a discrete valuation ring. The extension
\(\OO\to\OO'\) is finite free and faithfully flat. The latter is
semilocal, with the two possible residues \(w=1,-1\); we do not replace
it by a single branch in a descent argument. The ideal used in \(\OO\)
is \((u-1)\), not \((t-1)\) in a ring already containing \(u\).

Fix a convex order on \(\Delta_+\), equivalently an order of the
fundamental positions in \(\IhQ\) used for the standard basis in
\cite[\S\S5--6]{HL}. For \(\mathbf a\in\N^{\Delta_+}\), let
\(\mathsf S_{\mathbf a}\) be the corresponding ordered product of the
\(Z_{i,p}\), normalized by its invertible highest coefficient.

\begin{proposition}[Local standard basis]\label{prop:localbasis}
The elements \(\mathsf S_{\mathbf a}\) form an \(\OO\)-basis of
\(\GG_{\OO}(Q):=\GG_R(Q)\otimes_R\OO\). In particular, this algebra
is \(\OO\)-free and its degree-\(\gamma\) component has rank
\(p(\gamma)\). Its special fiber is the one-parameter truncated
Grothendieck algebra at \(t_0\):
\begin{equation}\label{eq:specialfiberG}
 \GG_{\OO}(Q)/(u-1)\GG_{\OO}(Q)
       \simeq\mathcal K_{t_0}(\CQ)\otimes_{\Q(t_0^{1/2})}F.
\end{equation}
\end{proposition}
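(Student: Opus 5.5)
The plan is to work first over \(\OO'=\OO\otimes_RR'\), where \cref{prop:torus-twist} turns the question into one about the one-parameter algebra, and then to descend to \(\OO\) using that \(\OO\to\OO'\) is faithfully flat.

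First, I would show that \(\mathsf S_{\mathbf a}\) spans over \(\OO'\) and is well normalized. By \cref{prop:characters}, each \(Z_{i,p}\) equals \(\operatorname{tr}_QF_v(Y_{i,p})\) inside the common Laurent-monomial space. By \cref{prop:torus-twist}, an ordered toroidal product of these generators equals \(w^{c}\) times the corresponding \(*_v\)-product, and \(w^c\) is a unit of \(\OO'\).

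The one-parameter standard products are known to be triangular. Each has leading monomial \(\prod m_{i,p}^{a_{i,p}}\) with coefficient a power of \(v^{1/2}=t_0^{1/2}w\), which is a unit, and the remaining monomials are lower in the order of \cref{lem:lower}. Therefore the highest coefficient of \(\mathsf S_{\mathbf a}\) is a unit of \(R'\), and the normalization is legitimate even over \(R\). It is a monomial in \(t_0^{1/2}\) and \(u\), so it lies in \(R\), not just \(R'\).

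Second, I would prove linear independence and spanning.
\begin{itemize}
\item \emph{Independence.} The leading monomials \(\prod m^{a}\) are distinct for distinct \(\mathbf a\), because the dominant monomials of \(\CQ\) are free on \(\IhQ\). Hence the \(\mathsf S_{\mathbf a}\) are independent, even modulo \((u-1)\), since their leading coefficients are \(1\).
\item \emph{Spanning.} Any word in the generators must be shown to be an \(\OO\)-combination of the \(\mathsf S_{\mathbf a}\). Over \(K\) this holds by a dimension count: in degree \(\gamma\) there are \(p(\gamma)\) independent elements \(\mathsf S_{\mathbf a}\), and \(\dim=p(\gamma)\) by \cref{thm:graded-twist}.
\end{itemize}

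Third, I would obtain integrality by the triangular unique-dominant argument. Let \(x\in\GG_R(Q)_\gamma\). Subtract \(c\,\mathsf S_{\mathbf a}\), where \(\mathbf a\) is the highest dominant leading term and \(c\) is its coefficient, which lies in \(R\) because products of elements of \(\TT_R\) stay in \(\TT_R\). Repeat the subtraction; the process terminates by finiteness of the degree-\(\gamma\) component. The possible obstruction is a remainder with no dominant monomial. It lies in the one-parameter screening intersection after twisting, so it is zero by the unique-dominant characterization used in \cref{prop:characters}.

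This step is the one I expect to be the main obstacle. The truncation \(\operatorname{tr}_Q\) must commute with products in the sense of \cite[\S5.6]{FH}, so that the truncated algebra is still controlled by dominant monomials supported on \(\IhQ\). I would verify this by appeal to the Hernandez--Leclerc truncation being an algebra map on the relevant subalgebra. That gives the \(\OO\)-basis, and hence the rank \(p(\gamma)\).

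Finally, for the special fiber, reduce modulo \(u-1\). Then \(t=1\), all negative even parameters become \(1\), \(D^{(2)}\) drops out of \eqref{eq:based-product}, and the product becomes \(t_0^{D_0/2}mn\), the one-parameter product at \(v=t_0\). At the same specialization, \(F_v\) at \(v=t_0u\) specializes to \(F_{t_0}\) by \cref{prop:characters}, which is compatible because the coefficients lie in \(\OO\). Thus the images of the \(Z_{i,p}\) are the truncated \(t_0\)-characters. They generate \(\mathcal K_{t_0}(\CQ)\otimes F\), and the images of \(\mathsf S_{\mathbf a}\) form its standard basis, which gives \eqref{eq:specialfiberG}.
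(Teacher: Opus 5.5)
Your proposal is correct, but it reaches integrality by a different route from the paper.

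The paper passes to $\OO'$ and uses \cref{prop:characters,prop:torus-twist} to see each $\mathsf S_{\mathbf a}$ as a unit multiple of a Hernandez--Leclerc standard-basis element. It takes the $\OO'$-basis property of that one-parameter basis as known. It then descends to $\OO$, because the kernel and cokernel of the map from the free module on the symbols vanish after the faithfully flat extension $\OO\to\OO'$.

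You instead work unitriangularly inside the toroidal torus itself. The highest coefficient of each ordered product is a monomial $t_0^{k/2}u^{l}\in R^\times$. The leading monomials $m^{\mathbf a}$ are distinct. Dominant-monomial subtraction only ever uses coefficients lying in $R$.

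Your route buys three things:
\begin{itemize}
\item it needs neither $w$ nor faithfully flat descent;
\item it uses Hernandez--Leclerc only through the generic count of \cref{thm:graded-twist}, not through an integral form of their standard basis;
\item it actually shows that $\GG_R(Q)$ is already $R$-free on the $\mathsf S_{\mathbf a}$.
\end{itemize}
The paper's route avoids all order bookkeeping by transporting the one-parameter basis wholesale. The special-fiber step is essentially the same in both.

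One correction: the truncated remainder in your third step does not lie in a screening intersection. You also do not need the truncation property you flag as the main obstacle. Your second step already makes the $\mathsf S_{\mathbf a}$ of degree $\gamma$ a $K$-basis, so any remainder can be written as $\sum c_{\mathbf a}\mathsf S_{\mathbf a}$. Use the order $m'\le m$ iff $m/m'$ is a product of root monomials; this is a partial order by \cref{lem:Aindependent}. Choose $m^{\mathbf a}$ maximal among those with $c_{\mathbf a}\neq0$. Then the dominant monomial $m^{\mathbf a}$ occurs in the remainder with coefficient exactly $c_{\mathbf a}$. Hence a remainder with no dominant monomials is zero.
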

\begin{proof}
The ordinary one-parameter standard basis consists of ordered
fundamental products, normalized by quantum-torus commutation factors; see
\cite[\S5 and Theorem 6.1]{HL}. After the substitution
\(v^{1/2}=t_0^{1/2}w\), these normalization factors are monomials in
\(t_0^{1/2}\) and \(w\), hence units of \(\OO'\). No assertion about an
arbitrary rational function of \(v^{1/2}\) is needed.

The character comparison \eqref{eq:characters}, initially proved over
\(K'\), identifies the same commutative-monomial expansions on the two
sides. Since the specialized toroidal generators lie in the based
\(\OO'\)-torus, the corresponding one-parameter expressions do as well.
By \cref{prop:torus-twist}, every ordered homogeneous product differs from
its one-parameter counterpart by a power of \(w\), again an
\(\OO'\)-unit. Consequently the elements \(\mathsf S_{\mathbf a}\)
become, after extension to \(\OO'\), unit multiples of the ordinary
standard-basis elements and hence form an \(\OO'\)-basis.

Consider the map from the free \(\OO\)-module on these symbols to
\(\GG_{\OO}(Q)\). Its kernel and cokernel become zero after the
faithfully flat extension \(\OO'\). They are consequently zero
already over \(\OO\). This proves the asserted basis and ranks,
without relying on a triangular expansion in the deformed
\(F_\infty\)-basis.

Under \(u=1\), the specialized fundamental characters are the
one-parameter ones at \(t_0\). This follows either from the unique
dominant construction or by choosing \(w=1\) in the preceding character
comparison. The ordered standard products specialize to their ordinary
standard basis. The resulting surjection to the one-parameter algebra is
therefore an isomorphism of the based modules, and of algebras. This proves
\eqref{eq:specialfiberG}.
\end{proof}

\begin{corollary}[Small homogeneous dimensions]\label{cor:smalldim}
If \(r\sim s\), the components of degrees
\(\alpha_r+\alpha_s\), \(2\alpha_r+\alpha_s\), and
\(\alpha_r+2\alpha_s\) have dimension two over \(K\). If
\(i\ne j\) and \(i\not\sim j\), the component of degree
\(\alpha_i+\alpha_j\) has dimension one.
\end{corollary}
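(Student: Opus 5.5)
The plan is to reduce the corollary to a count of Kostant partitions, using \eqref{eq:graded-dim} in \cref{thm:graded-twist}. That result already identifies the dimension of each homogeneous component of \(\GG_R(Q)\otimes_RK\) with \(p(\gamma)\); alternatively one can use the ranks in \cref{prop:localbasis}, since the rank of a free \(\OO\)-module equals the dimension of its generic fiber. No further input about the toroidal product is needed. What remains is to list, for each degree \(\gamma\) in the statement, the ways of writing \(\gamma\) as an unordered sum of positive roots of the simply-laced root system \(\Delta_+\).

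The one fact about \(\Delta_+\) I will use is the following. A positive root supported on a set \(S\) of simple roots must have connected support \(S\). In the simply-laced case, if \(i\sim j\) then \(\alpha_i+\alpha_j\in\Delta_+\), while \(2\alpha_i+\alpha_j\) and \(\alpha_i+2\alpha_j\) are not roots. These follow from the \(\alpha_i\)-string through \(\alpha_j\) having length \(-\langle\alpha_j,\alpha_i^\vee\rangle+1=2\), or equivalently from the norm: \((2\alpha_i+\alpha_j,2\alpha_i+\alpha_j)=8+2-4=6\neq2\). Multiples \(k\alpha_i\) with \(k\ge2\) are not roots.

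The enumeration is then immediate. For \(r\sim s\) and \(\gamma=\alpha_r+\alpha_s\), the partitions are \(\{\alpha_r+\alpha_s\}\) and \(\{\alpha_r,\alpha_s\}\), so \(p(\gamma)=2\). For \(\gamma=2\alpha_r+\alpha_s\), any root occurring in a partition lies below \(\gamma\), so it is one of \(\alpha_r\), \(\alpha_s\) or \(\alpha_r+\alpha_s\). The partitions are \(\{\alpha_r,\alpha_r,\alpha_s\}\) and \(\{\alpha_r,\alpha_r+\alpha_s\}\), so \(p(\gamma)=2\). The case \(\alpha_r+2\alpha_s\) follows by symmetry. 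For \(i\ne j\) with \(i\not\sim j\), the support \(\{i,j\}\) is disconnected, so \(\alpha_i+\alpha_j\notin\Delta_+\) and the only partition is \(\{\alpha_i,\alpha_j\}\), giving \(p(\gamma)=1\).

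There is no real obstacle. The only point to check is that \eqref{eq:graded-dim} applies over \(K\) and not just over \(K'\), which \cref{thm:graded-twist} states explicitly. Also, the Kostant partition function is the one for \(\mathfrak g\), since by \cref{lem:mesh} the root grading takes values in the finite root lattice \(\LL\) of \(\mathfrak g\).
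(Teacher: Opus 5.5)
Your proposal is correct and follows the same route as the paper: apply \eqref{eq:graded-dim} from \cref{thm:graded-twist} and count Kostant partitions. The count uses the fact that only $\alpha_r,\alpha_s,\alpha_r+\alpha_s$ (respectively only $\alpha_i,\alpha_j$ in the nonadjacent case) are positive roots lying below the given degrees; your root-string and norm arguments simply make explicit the root-system facts that the paper's short proof takes for granted.
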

\begin{proof}
Only roots supported on the indicated vertices can occur in a Kostant
partition of these degrees. For an adjacent pair, the available positive
roots are \(\alpha_r,\alpha_s,\alpha_r+\alpha_s\), giving exactly
two partitions of each listed degree. For a nonadjacent pair only the two
simple roots are available, giving one partition. Apply
\eqref{eq:graded-dim}.
\end{proof}

\subsection{Source classes and highest pairings}
\begin{proposition}[Simple-root normalizations]\label{prop:source-base}
If \(r\in I_+\), then \(Z_r=Y_{r,1}\). If \(r\to s\) and
\(m_s=Y_{\kappa_s,p_s}\) is the highest monomial of \(Z_s\), then
\begin{equation}\label{eq:base-pair}
 (D_0,D^{(2)})(Y_{r,1},m_s)=(1,1).
\end{equation}
For distinct nonadjacent simple roots with highest monomials \(m_i,m_j\),
one has
\begin{equation}\label{eq:nonadjbase}
 D_0(m_i,m_j)=D^{(2)}(m_i,m_j)=0.
\end{equation}
\end{proposition}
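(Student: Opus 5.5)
The first claim should follow from a support argument. The second and third should follow from \cref{thm:kernel} combined with one explicit evaluation of \(D_0\).

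\textbf{Step 1: \(Z_r=Y_{r,1}\) for a source \(r\).} For \(r\in I_+\) we have \(\gamma_r=\alpha_r\), sitting at the slice position \((r,\xi_r)=(r,1)\). Hence \(\varphi(r,1)=(\alpha_r,0)\), so \(\kappa_r=r\) and \(p_r=1\).

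By \cref{lem:lower}, every monomial of \(F_\infty(Y_{r,1})\), and of its specialization, is \(Y_{r,1}\prod A_{i,q}^{-v_{i,q}}\) with \(v\ge0\) of finite support. I will use the standard property of the recursive (screening) construction, as in \cite{H,FH}: every lowering step starting from \(Y_{r,1}\) has centre \(q\ge2\). By \cref{lem:Aindependent} the exponents \(v\) are then uniquely determined and supported on centres \(q\ge2\).

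Take a non-highest monomial and let \(q\) be the largest centre with some \(v_{i,q}>0\).
\begin{itemize}
\item The factor \(A_{i,q}^{-1}\) contributes \(Y_{i,q+1}^{-1}\).
\item Root monomials with centre \(\le q\) contribute at level \(q+1\) only through the \(Y_{i,q+1}\) terms inside \(A_{i,q}\); after inversion these enter \(A^{-v}\) only with negative exponent.
\item By maximality of \(q\), no root monomial has centre \(q+2\), so no positive \(Y_{i,q+1}\) is produced.
\end{itemize}
Hence the exponent of \(Y_{i,q+1}\) is negative, at level \(q+1\ge3\).

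Every column of \(\IhQ\) starts at \(\xi_i\le1\) and goes downward, so no position of \(\IhQ\) has level \(\ge2\). The truncation \(\operatorname{tr}_Q\) keeps only monomials in variables from \(\IhQ\). It therefore deletes every term except \(Y_{r,1}\), which gives \(Z_r=Y_{r,1}\).

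\textbf{Step 2: reduction of the pairings to \(D_0\).} Degrees are additive, and \(\deg_Q\) of the highest monomial of \(Z_i\) is \(\alpha_i\) by \cref{prop:characters}. So \cref{thm:kernel} gives
\[
2D^{(2)}(m_i,m_j)-D_0(m_i,m_j)=\om(\alpha_i,\alpha_j).
\]
For \(r\to s\) this value is \(1\), and for nonadjacent \(i,j\) it is \(0\). It therefore suffices to show \(D_0(Y_{r,1},m_s)=1\) in the adjacent case and \(D_0(m_i,m_j)=0\) in the nonadjacent case; the value of \(D^{(2)}\) is then forced, namely \(1\) and \(0\) respectively.

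\textbf{Step 3: evaluating \(D_0\).} I would compute \(D_0\) directly from \eqref{eq:Na} and the telescoped formula \eqref{eq:D2closed}. For two highest monomials at the same level, \(D_0=D^{(2)}=0\), as in the slice computation in the proof of \cref{thm:kernel}. The remaining work is to locate \((\kappa_s,p_s)=\varphi^{-1}(\alpha_s,0)\) for a sink \(s\).

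I would find this position by running \(\tau^{-1}\) through the mesh relations \eqref{eq:meshsource}--\eqref{eq:meshsink}, or equivalently by using \cite[\S2]{HL}. The aim is to show that \(\alpha_s\) sits in the column of \(s\) at some level \(p_s\le0\). The explicit kernel \(\mathcal N_0(r,1;\kappa_s,p_s)\) then becomes a short alternating sum of values \(c_{r\kappa_s}(m)\), which I would evaluate with the recurrence \eqref{eq:c-recur}.

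A cleaner alternative avoids locating \(p_s\) at all. Use \cref{lem:local-AY} to replace \(m_s\) by a monomial of the same \(Q\)-degree lying on the slice, expressed in terms of the \(\gamma\)'s. Then \(D_0\) reduces to slice values, where \(D_0(Y_{i,1},Y_{j,0})=-B_{ij}\), plus boundary corrections coming from \eqref{eq:A-D0}. The nonadjacent case would be handled the same way, using \(c(2)=H\) and \(H_{ij}=0\).

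\textbf{Main obstacle.} I expect Step 3 to be the difficult part. Although \(\Omega\) descends to the root lattice, \(D_0\) by itself does not: by \eqref{eq:A-D0} it changes under multiplication by root monomials. So the exact spectral position of \(m_s\) really matters. Pinning down \(p_s\) uniformly across types \(A\), \(D\), \(E\), and showing that the resulting inverse-Cartan combination equals exactly \(1\) (respectively \(0\)), is where the care is needed. I would attempt it first through the telescoping identity together with \eqref{eq:delta-mesh}, because that identity converts sums over the \(\tau\)-orbit into single boundary terms.
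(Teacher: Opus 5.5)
Your Steps 1 and 2 are fine. In Step 1 you use a Frenkel--Mukhin-type support argument: the maximal lowering centre forces a negative power of a variable at level $\ge3$, which lies outside $\IhQ$. The paper instead quotes the Hernandez--Leclerc initial-character identity and transfers it by \cref{prop:characters}; either route works. Step 2 is exactly how the paper gets $D^{(2)}$ from $D_0$.

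\textbf{The gap is Step 3.} It carries the entire content of \eqref{eq:base-pair} and \eqref{eq:nonadjbase}, and you leave it unresolved. \Cref{thm:kernel} cannot settle it. Both $(1,1)$ and $(-1,0)$ satisfy $2D^{(2)}-D_0=1$, and these are precisely the two spectral classes of \cref{thm:two}. So you need an independent evaluation of $D_0$ at the specific point $(\kappa_s,p_s)$.

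The missing ingredient is the inverse-Cartan/root-pairing formula of \cite[Proposition~3.2]{HL}, which the paper imports. For two points of $\IhQ$ at different levels, listed lower point first, $\mathcal N_0$ equals the symmetric Cartan pairing of their root labels. The only level-one points of $\IhQ$ are the source tops, which carry source simple roots, so $p_s<1$. The formula then gives $D_0(m_s,Y_{r,1})=(\alpha_s,\alpha_r)=-1$ at once. For nonadjacent $i,j$ at different levels it gives $(\alpha_i,\alpha_j)=0$, and at equal levels $\mathcal N_0=0$ directly.

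Your concrete plans for Step 3 would not get there as stated:
\begin{itemize}
\item The expectation that $\alpha_s$ lies in the column of $s$ is false in general. For $A_2$ with $1\to2$ one has $\beta_{1,-1}=\tau\gamma_1=\alpha_2$, so $(\kappa_2,p_2)=(1,-1)$. In the paper's $E_6$ example, $\kappa_3=5$.
\item The identity $c(2)=H$ controls $\mathcal N_0$ only for a level gap of one. Nonadjacent simple-root points can have large gaps (eleven for the pair $1,5$ in $E_6$), where you would need $c_{ij}(m)-c_{ij}(m+2)$ for large $m$.
\item The slice alternative only relocates the difficulty. Modulo $\RR$ one has $e_{\kappa_s,p_s}\equiv e_{s,0}-\sum_{i\sim s}e_{i,1}$, whose $D_0$-pairing with $e_{r,1}$ is $-1$. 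By \eqref{eq:A-D0} you would then have to show that the coefficients of $a_{r,0}$ and $a_{r,2}$ in the $\RR$-part of $e_{\kappa_s,p_s}$ differ by exactly one. That is a global count of the same difficulty as the original evaluation, and the proposal does not compute it.
\end{itemize}
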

\begin{proof}
At a source, \(\gamma_r=\alpha_r\), so the simple-root position is
\((r,1)\). The standard initial Kirillov--Reshetikhin class with highest
monomial \(Y_{i,p}Y_{i,p+2}\cdots Y_{i,\xi_i}\) has this single
monomial as its \(Q\)-truncated character; this is the initial-character
identity in the proof of \cite[Theorem 6.1]{HL}. With \(p=\xi_r=1\)
it gives \(Z_r=Y_{r,1}\) in the one-parameter algebra, and
\cref{prop:characters} gives the same equality toroidally. No assumption
of thinness of the full fundamental module is made.

Every point of \(\IhQ\) is weakly below its top height. A simple-root
point for a sink cannot have spectral coordinate \(1\), since the only
level-zero points there are the source tops and their roots are source
simple roots. Hence \(p_s<1\). The inverse-Cartan/root-pairing formula
of \cite[Proposition 3.2]{HL} gives
\[
 D_0(m_s,Y_{r,1})=(\alpha_s,\alpha_r)=-1.
\]
Skew-symmetry proves \(D_0(Y_{r,1},m_s)=1\). Since
\(\deg_QY_{r,1}=\alpha_r\), \(\deg_Qm_s=\alpha_s\), and
\(\om(\alpha_r,\alpha_s)=1\), \cref{thm:kernel} gives
\(2D^{(2)}=D_0+1=2\), proving \eqref{eq:base-pair}.

For nonadjacent simple roots, the same Hernandez--Leclerc formula gives
\(D_0(m_i,m_j)=0\) when their spectral positions differ. If the
positions agree, the kernel \eqref{eq:Na} is zero directly; no formula
requiring a strict spectral inequality is used in this case. Finally
\(\om(\alpha_i,\alpha_j)=0\), so \cref{thm:kernel} gives the
second equality in \eqref{eq:nonadjbase}.
\end{proof}

\section{Local spectral rigidity}\label{sec:local}
In this section products of elements of \(\GG_K(Q)\) are always toroidal
products. Powers and conjugation are taken with this product. A monomial
inverse is used only in the ambient torus.

\subsection{An annihilator and its spectral bound}
The elementary principle behind the argument is most naturally stated for
an arbitrary based torus.
\begin{lemma}[Krylov dimension and monomial eigenvalues]\label{lem:krylov}
Let \(\TT_k\) be a based quantum torus over a field \(k\), let \(X\)
be a nonzero monomial, and let \(Z\in\TT_k\) be a finite Laurent sum.
Suppose the distinct eigenvalues of \(\Ad_X\) on the nonzero monomial
blocks of \(Z\) are \(\lambda_1,\ldots,\lambda_d\). Then
\[
 \dim_k\operatorname{span}_k\{Z,\Ad_XZ,\Ad_X^2Z,\ldots\}=d.
\]
If the words \(X^2Z,XZX,ZX^2\) lie in a vector space of dimension at
most two, then \(d\le2\).
\end{lemma}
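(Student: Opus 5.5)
Decompose \(Z\) into monomial blocks and read off the Krylov space. Write \(Z=\sum_{m}c_m m\) as a finite sum over distinct commutative monomials with \(c_m\ne0\). In the based torus, conjugation by a monomial \(X\) acts on each monomial by a scalar:
\[
\Ad_X(m)=X*m*X^{-1}=\chi(X,m)\,m,
\]
where \(\chi(X,m)\) is the commutation scalar of \eqref{eq:based-product}, a unit in \(k\). Group the monomials of \(Z\) by this scalar, \(Z=Z_1+\cdots+Z_d\), where \(Z_\ell\) collects the terms with \(\chi(X,m)=\lambda_\ell\). Each \(Z_\ell\) is nonzero, because it is a nonempty sum of distinct basis monomials with nonzero coefficients. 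Then \(\Ad_X^kZ=\sum_\ell\lambda_\ell^kZ_\ell\).

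For the first assertion, observe that the \(Z_\ell\) are linearly independent, since they have disjoint monomial supports. So the Krylov span lies inside \(\operatorname{span}\{Z_1,\dots,Z_d\}\), which has dimension \(d\). Conversely, the Vandermonde matrix \((\lambda_\ell^{k})_{0\le k<d,\,1\le\ell\le d}\) is invertible because the \(\lambda_\ell\) are distinct. Hence each \(Z_\ell\) is a combination of \(Z,\Ad_XZ,\dots,\Ad_X^{d-1}Z\), and the dimension is exactly \(d\).

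For the second assertion, right-multiply the three words by \(X^{-2}\) in the ambient torus:
\[
X^2ZX^{-2}=\Ad_X^2Z,\qquad XZX^{-1}=\Ad_XZ,\qquad Z.
\]
Right multiplication by the unit \(X^{-2}\) is an injective linear map, so these three elements lie in a space of dimension at most two. They are therefore linearly dependent, and there is a nonzero polynomial \(P\) of degree at most two with \(P(\Ad_X)Z=0\). Expanding gives \(\sum_\ell P(\lambda_\ell)Z_\ell=0\). Independence of the blocks then forces \(P(\lambda_\ell)=0\) for every \(\ell\). A nonzero polynomial of degree at most two has at most two roots, so \(d\le2\).

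The argument is elementary. The only point needing care is to use the associative based product, so that \(X^{-1}\) exists in \(\TT_k\) and the rescaling by \(X^{-2}\) is legitimate. Conjugation acting diagonally on monomials is exactly \eqref{eq:based-product}.
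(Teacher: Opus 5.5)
Your proof is correct and follows essentially the same route as the paper. You group the monomials of \(Z\) into \(\Ad_X\)-eigenblocks with disjoint supports and use the Vandermonde matrix to get Krylov dimension \(d\). You then right-multiply a dependence among \(X^2Z,XZX,ZX^2\) by \(X^{-2}\) in the ambient torus, which gives a nonzero annihilating polynomial of degree at most two that must vanish at every block eigenvalue.
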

\begin{proof}
Write \(Z=\sum_{j=1}^d Z_j\), grouping its monomials by eigenvalue.
The \(Z_j\) have disjoint supports and are linearly independent. The
coefficients of \(\Ad_X^aZ\), for \(0\le a\le d-1\), on this
basis are the rows of the Vandermonde matrix
\((\lambda_j^a)\), whose determinant is nonzero. Higher iterates remain
in the span of the \(Z_j\), giving the first assertion.

For the second assertion, choose a nontrivial dependence
\[
 A X^2Z+B XZX+C ZX^2=0.
\]
Right multiplication in the torus by \(X^{-2}\) gives
\begin{equation}\label{eq:adjointpoly}
 (A\Ad_X^2+B\Ad_X+C)Z=0.
\end{equation}
The polynomial \(AT^2+BT+C\) is nonzero. On each nonzero monomial
block it vanishes at that block's eigenvalue, so it has at least \(d\)
distinct roots. Thus \(d\le2\). No inverse of \(X\) is asserted to
belong to the original subalgebra containing the three words.
\end{proof}

\subsection{The wall number}
Fix a source--sink edge \(r\to s\) and put
\[
 X=Z_r=Y_{r,1},\qquad Z=Z_s,\qquad m_+=Y_{\kappa_s,p_s}.
\]
For a monomial in \(Z\), use its unique lower-support expression to set
\begin{equation}\label{eq:wallnumber}
 m=m_+\prod A_{i,p}^{-v_{i,p}(m)},\qquad
 \delta_r(m)=v_{r,0}(m)-v_{r,2}(m).
\end{equation}
Uniqueness follows from \cref{lem:Aindependent}. The two indices \(0,2\)
are the spectral neighbors of the source height \(1\).

\begin{lemma}[Net wall formula]\label{lem:netwall}
Every monomial occurring in \(Z\) satisfies
\begin{align}
 (D_0,D^{(2)})(X,m)
       &=(1-2\delta_r(m),\ 1-\delta_r(m)),\label{eq:netwall}\\
 \Ad_X(m)&=\lambda_{\delta_r(m)}m,
 \qquad \lambda_\delta=t_0^{1-2\delta}u^{2-2\delta}.
       \label{eq:lambda-delta}
\end{align}
\end{lemma}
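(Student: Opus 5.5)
We prove \eqref{eq:netwall} by bilinearity of \(D_0\) and \(D^{(2)}\) in the second argument. By \cref{lem:lower} and \cref{lem:Aindependent}, every monomial of \(Z\) has the unique form
\[
m=m_+\prod A_{i,p}^{-v_{i,p}(m)} .
\]
Hence, for \(D\in\{D_0,D^{(2)}\}\),
\[
D(X,m)=D(X,m_+)-\sum_{i,p}v_{i,p}(m)\,D(X,A_{i,p}).
\]
The first term is given by \cref{prop:source-base}: \((D_0,D^{(2)})(Y_{r,1},m_+)=(1,1)\).

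For the second term we need \(D(Y_{r,1},A_{i,p})\). By skew-symmetry this equals \(-D(A_{i,p},Y_{r,1})\), which \cref{lem:local-AY} evaluates with \(j=r\), \(s=1\). Only \(i=r\) contributes, and only for \(p\in\{0,2\}\):
\begin{itemize}[nosep]
\item \(p=2\) (so \(r-s=1\)): \(D_0(A_{r,2},Y_{r,1})=2\) and \(D^{(2)}(A_{r,2},Y_{r,1})=1\);
\item \(p=0\) (so \(s-r=1\)): the values are \(-2\) and \(-1\).
\end{itemize}
Therefore
\[
D_0(X,A_{r,2})=-2,\quad D^{(2)}(X,A_{r,2})=-1,\qquad D_0(X,A_{r,0})=2,\quad D^{(2)}(X,A_{r,0})=1 .
\]
Substituting,
\[
D_0(X,m)=1-\bigl(2v_{r,0}-2v_{r,2}\bigr)=1-2\delta_r(m),\qquad
D^{(2)}(X,m)=1-\delta_r(m).
\]
This is \eqref{eq:netwall}. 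The parities match: \(r\in I_+\), so \(A_{r,p}\) requires \(p\) even, and \(p=0,2\) are admissible.

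For \eqref{eq:lambda-delta}, note that in the based torus \(X*m=c(X,m)\,m*X\), where \(c\) is the commutation scalar \(t_0^{D_0(X,m)}t^{D^{(2)}(X,m)}\) recorded after \eqref{eq:based-product}. Hence
\[
\Ad_X(m)=X*m*X^{-1}=t_0^{D_0(X,m)}u^{2D^{(2)}(X,m)}\,m .
\]
The scalar normalizations of the monomial cancel because \(X^{-1}\) is the based inverse. Inserting \eqref{eq:netwall} gives \(t_0^{1-2\delta}u^{2-2\delta}=\lambda_\delta\).

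One has to decide whether the paper defines \(\Ad_X\) as \(X\cdot X^{-1}\) or \(X^{-1}\cdot X\). The formula stated in \eqref{eq:lambda-delta} pins down the former, consistent with \eqref{eq:adjointpoly}, where right multiplication by \(X^{-2}\) is applied to \(X^2Z\).

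The main obstacle is bookkeeping the ordered-pair signs. These are the swap from \(D(A,Y)\) in \cref{lem:local-AY} to \(D(X,A)\), and the minus sign from the negative exponents \(-v_{i,p}\). A quick consistency check: \(\delta=0\) must reproduce the highest-monomial value \((1,1)\). Also, \(\delta=1\) should yield \(\lambda_1=t_0^{-1}\), one of the expected spectral values, which confirms the signs. Truncation and specialization play no role, since the computation is purely on exponent vectors.
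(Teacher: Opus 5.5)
Your proposal is correct and follows the paper's proof. You expand \(D(X,m)\) bilinearly over the unique lower-support form and take the base pair \((1,1)\) from \cref{prop:source-base}; you get \((D_0,D^{(2)})(X,A_{r,0})=(2,1)\) and \((D_0,D^{(2)})(X,A_{r,2})=-(2,1)\) from \cref{lem:local-AY} by skew-symmetry, all other root factors pairing trivially, and your derivation of \(\Ad_X(m)=t_0^{D_0(X,m)}u^{2D^{(2)}(X,m)}m\) with \(\Ad_X(Z)=XZX^{-1}\) (as forced by \eqref{eq:adjointpoly}) just spells out the paper's appeal to the based-torus commutation rule.
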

\begin{proof}
The highest pairing is \((1,1)\) by \eqref{eq:base-pair}. By
\cref{lem:local-AY}, every root factor other than \(A_{r,0}\) and
\(A_{r,2}\) pairs trivially with \(X\). The form on
\((X,A_{r,0})\) is \((2,1)\), and on \((X,A_{r,2})\) it is
\(-(2,1)\). Since the root factors in \eqref{eq:wallnumber} occur
inversely, their total contribution is \(-\delta_r(m)(2,1)\).
This proves \eqref{eq:netwall}, and the based-torus commutation rule gives
\eqref{eq:lambda-delta}.
\end{proof}
The number \(\delta_r(m)\) is an integer; no separate bound on the two
multiplicities in \eqref{eq:wallnumber} is assumed. In particular, neither
thinness nor a tableau formula enters the calculation.

\begin{lemma}[Integral spectral-block decomposition]\label{lem:integralblocks}
Let \(Z\in\GG_{\OO}(Q)\) be a finite Laurent sum and let \(X\) be a
monomial.  Assume every monomial \(m\) in \(Z\) is an
\(\Ad_X\)-eigenvector and that its eigenvalue is indexed by an integer
\(\delta(m)\).  Then
\[
 Z=\sum_{\delta}Z[\delta],\qquad
 Z[\delta]=\sum_{\delta(m)=\delta}c_m\,m,
\]
is a direct decomposition in the ambient based \(\OO\)-torus.  Each
\(Z[\delta]\) is defined over \(\OO\), and specialization at \(u=1\) is
obtained coefficientwise:
\[
 Z[\delta]\longmapsto
 \sum_{\delta(m)=\delta}c_m(1)\,m.
\]
In particular, no denominator involving differences of eigenvalues is
introduced.
\end{lemma}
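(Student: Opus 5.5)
The statement is essentially a bookkeeping fact about the based \(\OO\)-torus, and I would prove it directly from the freeness of \(\TT_{\OO}\) on the monomial basis \(\MM\). First I make precise that \(Z\) is an element of the ambient based torus over \(\OO\). By \cref{prop:localbasis}, \(\GG_{\OO}(Q)\) is generated by the \(Z_{i,p}\), whose coefficients lie in \(R\). The based product \eqref{eq:based-product} only multiplies by monomials in \(t_0^{1/2}\) and \(u\), and these are units of \(\OO\). Hence every element of \(\GG_{\OO}(Q)\) is a finite sum \(Z=\sum_m c_m m\) with \(c_m\in\OO\) and \(m\in\MM\). Because \(\TT_{\OO}\) is free over \(\OO\) with basis \(\MM\), the coefficients \(c_m\) are uniquely determined.

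Next I group the support of \(Z\) according to the integer label \(\delta(m)\). By hypothesis each \(m\) in the support is an \(\Ad_X\)-eigenvector, with eigenvalue \(\lambda_{\delta(m)}\); in the case of interest this is supplied by \cref{lem:netwall}. I then define \(Z[\delta]=\sum_{\delta(m)=\delta}c_m m\). The supports of distinct blocks are disjoint subsets of the basis \(\MM\), so \(Z=\sum_\delta Z[\delta]\) is a direct decomposition of \(\OO\)-modules: the monomial basis splits as a disjoint union, and the module splits accordingly. Each \(Z[\delta]\) has coefficients \(c_m\in\OO\), so it is defined over \(\OO\). It need not lie in \(\GG_{\OO}(Q)\), and I will not claim that it does; it lies in the ambient torus, as the statement says. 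Each block is also an \(\Ad_X\)-eigenvector with eigenvalue \(\lambda_\delta\), since \(\Ad_X\) acts diagonally on the basis.

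For the specialization, the reduction map \(\TT_{\OO}\to\TT_{\OO}/(u-1)\TT_{\OO}\) is \(\OO\)-linear and sends the basis \(\MM\) to the basis of the special-fiber torus, which is again free on \(\MM\). It therefore acts coefficientwise, \(c_m\mapsto c_m(1)\), and commutes with the decomposition, which is a partition of the basis. This gives the displayed formula. Some \(c_m(1)\) may vanish, so a block can become zero on the special fiber, but this does not affect the formula.

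The only point needing care is the final remark about denominators. The alternative construction would extract \(Z[\delta]\) by Lagrange interpolation in \(\Ad_X\), and that would divide by differences \(\lambda_\delta-\lambda_{\delta'}\). Such differences, for example \(t_0^{-2}u^{-2}-1\), are not obviously units at \(u=1\). The point to stress is that the construction above never uses \(\Ad_X\) to separate the blocks. It uses only the support partition, so no eigenvalue differences appear at all. I expect no real obstacle beyond stating this contrast clearly.
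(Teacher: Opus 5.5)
Your proposal is correct and follows the paper's own argument: the ambient torus is free over \(\OO\) on its monomial basis, each block \(Z[\delta]\) is the projection onto the summand indexed by one part of the partition of that basis by \(\delta(m)\), and reduction modulo \(u-1\) acts coefficientwise, so \(\Ad_X\) is never used to separate the blocks. One small correction to your closing remark: the sample difference \(t_0^{-2}u^{-2}-1\) reduces to \(t_0^{-2}-1\neq0\) at \(u=1\), so it is in fact a unit of \(\OO\) and does not illustrate a genuine denominator problem; nothing in your proof depends on that example.
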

\begin{proof}
The ambient based torus is a free \(\OO\)-module on its Laurent monomial
basis.  Partitioning this basis by the integer \(\delta(m)\) therefore
gives a direct sum decomposition of the underlying \(\OO\)-module.
The block \(Z[\delta]\) is simply the projection onto the corresponding
basis summand; it is not obtained by a rational polynomial in
\(\Ad_X\).  Hence all coefficients remain in \(\OO\), and reduction
modulo \(u-1\) is coefficientwise.
\end{proof}

\begin{corollary}[Criterion for block survival]\label{cor:blocksurvival}
In the setting of Lemma~\ref{lem:integralblocks}, \(Z[\delta]\) vanishes
in the \(u=1\) fiber if and only if every coefficient of that block is
divisible by \(u-1\).  If all other blocks specialize to a single
\(\Ad_X\)-eigenvalue, then vanishing of \(Z[\delta]\) forces the whole
specialized element \(Z|_{u=1}\) to lie in that one eigenspace.
\end{corollary}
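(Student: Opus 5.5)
The statement to prove is Corollary~\ref{cor:blocksurvival}. Both assertions follow directly from Lemma~\ref{lem:integralblocks}. That lemma gives \(Z[\delta]=\sum_{\delta(m)=\delta}c_m m\) with \(c_m\in\OO\), and reduction modulo \(u-1\) is coefficientwise on the Laurent monomial basis of the ambient based \(\OO\)-torus.

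\textbf{First assertion.} The ambient torus is free over \(\OO\) on the monomials \(m\). Its \(u=1\) fiber is free over \(\OO/(u-1)\simeq F\) on the images of these monomials, and distinct monomials stay distinct and linearly independent there. Hence \(Z[\delta]\) maps to zero exactly when every \(c_m(1)=0\). Since \(\OO\) is the local ring at \((u-1)\), the condition \(c_m(1)=0\) is equivalent to \(c_m\in(u-1)\OO\), i.e.\ to divisibility by \(u-1\). This gives the ``if and only if''.

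\textbf{Second assertion.} The eigenvalues are \(\lambda_\delta\), monomials in \(t_0^{1/2}\) and \(u\) (as in Lemma~\ref{lem:netwall}). They specialize at \(u=1\) to elements \(\lambda_\delta(1)\in F^\times\), and the eigenvector relation \(\Ad_X m=\lambda_{\delta(m)}m\) persists in the fiber, because the based-torus commutation scalar is itself reduced mod \(u-1\). Now assume every block with \(\delta'\neq\delta\) specializes into the single eigenspace for some \(\mu\in F^\times\). This means every monomial surviving in those blocks, i.e.\ with \(c_m(1)\neq0\), satisfies \(\lambda_{\delta(m)}(1)=\mu\). If also \(Z[\delta]\) vanishes in the fiber, then
\[
Z|_{u=1}=\sum_{\delta'\ne\delta}Z[\delta']|_{u=1}
\]
is a sum of \(\Ad_X\)-eigenvectors with the common eigenvalue \(\mu\), so it lies in that eigenspace.

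The only point needing care is that \(Z|_{u=1}\) is computed in the fiber of the ambient torus. Its image in \(\GG_\OO(Q)/(u-1)\) agrees with this via Proposition~\ref{prop:localbasis}, but the statement only concerns the ambient fiber, so no further input is needed. There is no real obstacle: the argument is bookkeeping on a free basis, and the key point is that block projection introduces no eigenvalue-difference denominators.
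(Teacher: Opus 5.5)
Your proof is correct and follows the paper's argument. The first claim comes from freeness of the Laurent monomial basis over the DVR \(\OO\): vanishing in the fiber means \(c_m(1)=0\), i.e.\ \(c_m\in(u-1)\OO\), for each monomial of the block; the second claim is immediate from the coefficientwise block decomposition, and, as you note, your side remark about \(\GG_{\OO}(Q)/(u-1)\) is not needed.
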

\begin{proof}
The first assertion follows from freeness of the Laurent monomial basis
over the DVR \(\OO\).  The second is immediate from the direct
coefficientwise decomposition.
\end{proof}

\begin{theorem}[Two-point local spectrum]\label{thm:two}
For every source--sink edge in a finite simply-laced bipartite quiver,
\begin{equation}\label{eq:two}
 \Spec_X(Z)=\{t_0t,t_0^{-1}\}.
\end{equation}
Equivalently, \(\delta_r(m)\in\{0,1\}\) for every occurring monomial,
and the exponent pairs are exactly \((1,1)\) and \((-1,0)\).
\end{theorem}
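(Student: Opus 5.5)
Combine the Krylov bound of \cref{lem:krylov} with the net wall formula of \cref{lem:netwall}, then locate the eigenvalues by passing to the one-parameter fiber at \(u=1\).

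\emph{Step 1 (at most two eigenvalues).} By \cref{prop:source-base}, \(X=Y_{r,1}\) is a monomial, so it is invertible in the ambient torus. The words \(X^2Z\), \(XZX\), \(ZX^2\) all lie in the homogeneous component of degree \(2\alpha_r+\alpha_s\). By \cref{cor:smalldim} this component is two-dimensional over \(K\). \Cref{lem:lower} and \cref{lem:netwall} show that every monomial of \(Z\) is an \(\Ad_X\)-eigenvector with eigenvalue \(\lambda_{\delta_r(m)}\). The map \(\delta\mapsto\lambda_\delta=t_0^{1-2\delta}u^{2-2\delta}\) is injective because \(t_0,u\) are independent. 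So \cref{lem:krylov} gives at most two distinct values of \(\delta_r(m)\). The highest monomial has \(\delta_r=0\), which gives the eigenvalue \(t_0u^2=t_0t\) with pair \((1,1)\).

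\emph{Step 2 (identify the second value).} The block \(Z[0]\) contains the highest monomial with coefficient \(1\), so it survives at \(u=1\). We must show there is exactly one further block and that it has \(\delta=1\). The plan is to use the special fiber, via \cref{prop:localbasis} and \cref{lem:integralblocks}. At \(u=1\) the element \(Z|_{u=1}\) is the one-parameter class at \(v=t_0\), and \(\lambda_\delta|_{u=1}=t_0^{1-2\delta}\). In the Hernandez--Leclerc algebra, with \(\mathcal K_{t_0}(\CQ)\) identified with \(U^+\), the classes \(X,Z\) are images of \(E_r,E_s\). These satisfy the one-parameter quantum Serre relation and do not \(t_0\)-commute, since the degree \(\alpha_r+\alpha_s\) component is two-dimensional and the root vector is nonzero. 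So the specialized \(\Ad_X\) acting on \(Z|_{u=1}\) has the factored annihilator \((T-t_0)(T-t_0^{-1})\). This follows from the Serre relation written as \(\Ad_X^2-(t_0+t_0^{-1})\Ad_X+1\) up to the normalization \(v^{D_0/2}\), and \(Z|_{u=1}\) is not a single eigenvector. Hence a block with specialized eigenvalue \(t_0^{-1}\) survives, and such a block has \(\delta=1\).

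\emph{Step 3 (assemble).} Combining with Step 1, the surviving blocks have \(\delta=0\) and \(\delta=1\). The bound \(d\le2\) excludes every other \(\delta\) from occurring at all, even blocks that would vanish at \(u=1\). By \cref{cor:blocksurvival}, a monomial that occurs has an \(\OO\)-coefficient block that is nonzero over \(K\). Therefore \(\delta_r(m)\in\{0,1\}\), with pairs \((1,1)\) and \((-1,0)\) from \eqref{eq:netwall}, and the eigenvalues are \(t_0t\) and \(t_0^{-1}\).

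\emph{Main obstacle.} I expect Step 2 to be the hardest: justifying that \(Z|_{u=1}\) is not an \(\Ad_X\)-eigenvector. This requires that \(X\) and \(Z\) not quasi-commute in the one-parameter algebra. My first attempt would be a degree-\((\alpha_r+\alpha_s)\) argument: if they quasi-commuted, \(XZ\) and \(ZX\) would be proportional. But the HL isomorphism sends them to \(E_rE_s\) and \(E_sE_r\), which are independent in \(U^+_{\alpha_r+\alpha_s}\), a space of dimension \(2\) by \cref{cor:smalldim}. A fallback is a direct check that some monomial of \(Z\) contains \(A_{r,0}^{-1}\), coming from the \(r\)-expansion below the highest monomial.
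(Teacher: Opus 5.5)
Your proposal is correct and follows essentially the paper's argument. Both use \cref{lem:krylov} with \cref{cor:smalldim} to allow at most two wall numbers generically, then use the special fiber: adjacent Chevalley generators do not quasi-commute, and the Serre annihilator $(\Ad_X-t_0)(\Ad_X-t_0^{-1})$ then produces a surviving component with $\delta=1$. The only difference is ordering. You read $\delta=1$ directly off the nonzero $t_0^{-1}$-eigencomponent at $u=1$. The paper first produces a second generic block $Z[d]$, shows via \cref{cor:blocksurvival} that it survives, and only then forces $d=1$; your order is a slight streamlining.
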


\begin{remark}[Order of scalar extension and specialization]
The spectral bound is proved over the generic field \(K\), whereas block
survival is checked over the integral local form \(\OO\).  This causes no
conflict: the blocks are first defined by Laurent-monomial support over
\(\OO\), then extended to \(K\) for the generic eigenvalue argument, and
finally reduced modulo \(u-1\).  Thus no eigenblock is created by scalar
extension and no projector denominator needs to be specialized.
\end{remark}

\begin{proof}
The three words \(X^2Z,XZX,ZX^2\) have degree
\(2\alpha_r+\alpha_s\) in \(\GG_K(Q)\). Its dimension is two by
\cref{cor:smalldim}. By \cref{lem:krylov}, at most two distinct
\(\Ad_X\)-eigenvalues occur. The eigenvalues
\(\lambda_\delta\) in \eqref{eq:lambda-delta} are pairwise distinct,
since \(t_0,u\) are algebraically independent. Thus at most two integer
wall numbers occur.

The highest monomial has coefficient one and wall number zero. Hence the
block \(Z[0]\) is nonzero and \(\lambda_0=t_0t\) occurs. If this
were the only block over \(K\), then \(XZ=t_0tZX\). This identity is
already an identity of Laurent sums over \(\OO\), so it specializes at
\(u=1\). In the one-parameter fiber, the simple-root classes correspond
to nonzero scalar multiples of adjacent Chevalley generators
\cite[\S7.1]{HL}. Those generators admit no relation of degree
\(\alpha_r+\alpha_s\); their two ordered words are independent.
The specialized equality \(XZ=t_0ZX\) is impossible. Therefore there
is a second generic block, say
\[
 Z=Z[0]+Z[d],\qquad d\ne0.
\]

By Lemma~\ref{lem:integralblocks}, these blocks are defined over the
ambient based \(\OO\)-torus by direct monomial grouping; no rational
spectral projector is involved.  We do not assert that either block
belongs to \(\GG_{\OO}(Q)\).  If the second block vanished in the
special fiber, Corollary~\ref{cor:blocksurvival} would leave only the
\(\delta=0\) eigenspace and hence force the same forbidden equality
\(XZ=t_0ZX\).  Thus its specialization is nonzero.  The first block also
survives because it contains the highest monomial with coefficient one.

In the one-parameter torus, the two specialized blocks have eigenvalues
\(t_0\) and \(t_0^{1-2d}\). The ordinary adjacent Serre relation gives
\[
 (\Ad_X-t_0)(\Ad_X-t_0^{-1})Z=0.
\]
Its two terms can be separated by their disjoint monomial supports.
Since the second block is nonzero and \(d\ne0\), one must have
\(t_0^{1-2d}=t_0^{-1}\). Hence \(d=1\). Formula
\eqref{eq:netwall} gives the two asserted pairs, both of which occur.
\end{proof}

\begin{remark}
The proof allows coefficients of individual monomials to vanish at
\(u=1\). It requires only that the second entire block cannot vanish.
The bound on the number of generic blocks has already been proved before
specialization. This is the precise replacement for a termwise
support-preservation hypothesis in a non-thin character.
\end{remark}

\subsection{The two oriented Serre relations}
Put \(a=t_0t\), \(b=t_0^{-1}\), and write
\(Z=P+Q_1\) for the two nonzero monomial blocks. The symbol \(Q_1\)
is used here to distinguish the second block from the quiver \(Q\).
Then \(XP=aPX\), \(XQ_1=bQ_1X\).

\begin{proposition}[Forward Serre relation]\label{prop:forward}
One has
\begin{equation}\label{eq:forward}
 X^2Z-(t_0t+t_0^{-1})XZX+tZX^2=0.
\end{equation}
\end{proposition}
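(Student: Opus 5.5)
The plan is to verify \eqref{eq:forward} directly in the ambient based torus over \(K\), using the block decomposition \(Z=P+Q_1\) from \cref{thm:two}. The key observation is that the coefficients in \eqref{eq:forward} are the elementary symmetric functions of the two spectral values:
\[
a+b=t_0t+t_0^{-1},\qquad ab=(t_0t)(t_0^{-1})=t .
\]
So the left side of \eqref{eq:forward} should be the factored quadratic \((\Ad_X-a)(\Ad_X-b)\) applied to \(Z\), multiplied on the right by \(X^2\).

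First I use that \(X=Y_{r,1}\) is a monomial. It is therefore invertible in the ambient torus \(\TT_K\), and \(\Ad_X(Y)=XYX^{-1}\) is an algebra automorphism there. Right multiplication by \(X^2\) is injective on \(\TT_K\), so it suffices to prove
\[
\bigl(X^2Z-(a+b)XZX+abZX^2\bigr)X^{-2}=0 .
\]
Expanding with associativity of the based product, the left side is
\[
\Ad_X^2Z-(a+b)\Ad_XZ+abZ=(\Ad_X-a)(\Ad_X-b)Z .
\]
This is the same manipulation as in the proof of \cref{lem:krylov}, run in reverse.

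Next I substitute \(Z=P+Q_1\). Since \(XP=aPX\) and \(XQ_1=bQ_1X\), we have \(\Ad_XP=aP\) and \(\Ad_XQ_1=bQ_1\). The two factors of the quadratic commute, so \((\Ad_X-a)\) kills \(P\) and \((\Ad_X-b)\) kills \(Q_1\). Hence the displayed quadratic vanishes on \(Z\), and multiplying back by \(X^2\) gives \eqref{eq:forward} in \(\TT_K\). All three words \(X^2Z\), \(XZX\), \(ZX^2\) lie in \(\GG_K(Q)\), and the coefficients \(t_0t+t_0^{-1}\) and \(t\) lie in \(R\). The relation therefore holds in \(\GG_K(Q)\), and it holds already over \(R\) because \(\GG_R(Q)\subset\TT_R\) embeds.

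The only substantive input is \cref{thm:two}: there are exactly two eigenblocks, with eigenvalues \(a=\lambda_0\) and \(b=\lambda_1\) as in \eqref{eq:lambda-delta}. I take that result as given, so I expect no real obstacle. The one point to check carefully is the ordering convention. We need \(XP=aPX\), rather than \(PX=aXP\), to match the commutation scalar \(t_0^{D_0(X,m)}t^{D^{(2)}(X,m)}\) with \((D_0,D^{(2)})(X,m)=(1,1)\) and \((-1,0)\) from \eqref{eq:netwall}. This is what makes the coefficient of \(ZX^2\) equal to \(ab=t\), rather than its inverse.
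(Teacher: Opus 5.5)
Your proposal is correct and is essentially the paper's argument. The paper applies the factored quadratic \(\lambda^2-(a+b)\lambda+ab\) monomial by monomial, using \(\lambda\in\{a,b\}\) from \cref{thm:two}; you apply \((\Ad_X-a)(\Ad_X-b)\) to the two eigenblocks after right multiplication by \(X^{-2}\), which is the same computation, and your ordering check giving \(ab=t\) matches the based-product commutation scalar.
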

\begin{proof}
For a monomial \(m\) with eigenvalue \(\lambda\in\{a,b\}\),
\[
 X^2m-(a+b)XmX+ab\,mX^2
       =(\lambda^2-(a+b)\lambda+ab)mX^2=0.
\]
Sum over the monomial expansion of \(Z\), and use \(ab=t\).
\end{proof}

\begin{lemma}[Separation of the pure square blocks]\label{lem:puresquares}
Let \(Z_s=P+Q\) be the two nonzero \(\Ad_{Z_r}\)-eigenblocks, with
\[
\Ad_{Z_r}(P)=aP,\qquad \Ad_{Z_r}(Q)=bQ,\qquad a\neq b.
\]
Then \(P^2,Q^2\neq0\) in the ambient quantum torus and are eigenvectors
with eigenvalues \(a^2,b^2\), respectively.  For the generic parameters
of this paper these eigenvalues are distinct, so the two pure-square
components of any ambient-torus relation vanish separately.
\end{lemma}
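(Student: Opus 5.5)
The plan has three parts: nonvanishing of the squares, the eigenvalue property, and distinctness of \(a^2\) and \(b^2\). Throughout I use that the ambient based torus \(\TT_K\) is a twisted group algebra of the free abelian group \(\PP\), hence a domain.

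\emph{Nonvanishing.} Pick any total order on \(\PP\) compatible with addition; a lexicographic order on the finitely many coordinates involved suffices. For nonzero Laurent sums \(P,Q\), the leading term of \(P*Q\) is the product of the leading terms. It is a nonzero scalar times a commutative monomial, because every commutation factor \(t_0^{D_0/2}u^{D^{(2)}}\) is a unit. Hence \(\TT_K\) has no zero divisors. Since \(P\ne0\) and \(Q\ne0\) by \cref{thm:two}, we get \(P^2\ne0\) and \(Q^2\ne0\).

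\emph{Eigenvalues.} \(\Ad_{Z_r}\), i.e.\ conjugation by the monomial \(X=Z_r\), is an algebra automorphism of the ambient torus. Therefore \(\Ad_X(P^2)=\Ad_X(P)\Ad_X(P)=a^2P^2\), and likewise \(\Ad_X(Q^2)=b^2Q^2\).

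\emph{Distinctness and separation.} With \(a=t_0t=t_0u^2\) and \(b=t_0^{-1}\), we have \(a^2=t_0^2u^4\ne t_0^{-2}=b^2\), since \(t_0,u\) are algebraically independent; in general it is enough that \(a\ne\pm b\). The monomial supports of \(P^2\) and \(Q^2\) lie in the \(\Ad_X\)-eigenspaces for \(a^2\) and \(b^2\). By \cref{lem:netwall} every monomial is an \(\Ad_X\)-eigenvector, so the ambient torus is the direct sum of its \(\Ad_X\)-eigenspaces, each spanned by monomials. Projecting an ambient relation onto the \(a^2\)- and \(b^2\)-eigenspaces isolates the pure-square components, and these must vanish separately. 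The mixed products \(PQ\) and \(QP\) both lie in the \(ab\)-eigenspace. No relation between them is needed; it suffices that \(ab\) differs from \(a^2\) and \(b^2\), which again follows from \(a\ne b\).

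The main obstacle is making "components vanish separately" precise. Because the eigenspace decomposition is by monomial support, projection is coefficientwise and needs no spectral projector denominators, exactly as in \cref{lem:integralblocks}. Nonvanishing of \(P^2\) and \(Q^2\) is routine once the domain property is noted.
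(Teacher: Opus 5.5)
Your proposal is correct and follows the same route as the paper: the ambient torus is a domain, so \(P^2,Q^2\neq0\); \(\Ad_{Z_r}\) is an algebra automorphism, which gives the eigenvalues \(a^2,b^2\); and algebraic independence of \(t_0,u\) makes these distinct. Your leading-term proof of the domain property and your explicit monomial-support projection spell out what the paper takes for granted, though every monomial is an \(\Ad_X\)-eigenvector directly by the based-torus commutation rule \eqref{eq:based-product}, not by \cref{lem:netwall}, which covers only monomials occurring in \(Z\).
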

\begin{proof}
The ambient based quantum torus is a domain, hence \(P^2,Q^2\neq0\).
Since \(\Ad_{Z_r}\) is an algebra automorphism,
\[
\Ad_{Z_r}(P^2)=a^2P^2,\qquad \Ad_{Z_r}(Q^2)=b^2Q^2.
\]
Here \(a=t_0t\) and \(b=t_0^{-1}\); algebraic independence of \(t_0,t\)
gives \(a^2\neq b^2\).
\end{proof}

\begin{lemma}[Reverse-Serre rigidity]\label{lem:reverse}
Let \(X\) be an invertible monomial in a based torus over a field, and
let \(Z=P+Q_1\) with nonzero blocks satisfying
\(XP=aPX\), \(XQ_1=bQ_1X\). Assume \(a^2,ab,b^2\) are pairwise
distinct and \(Z^2X,ZXZ,XZ^2\) are linearly dependent. If the ambient
torus is a domain, their unique normalized relation is
\begin{equation}\label{eq:reverse-general}
 Z^2X-(a^{-1}+b^{-1})ZXZ+(ab)^{-1}XZ^2=0.
\end{equation}
\end{lemma}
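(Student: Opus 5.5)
Expand the three words in terms of the blocks and move every occurrence of \(X\) to the right, using \(XP=aPX\) and \(XQ_1=bQ_1X\). Write a general dependence as
\[
\mu_1Z^2X+\mu_2ZXZ+\mu_3XZ^2=0 .
\]
Expanding gives \(Z^2=P^2+PQ_1+Q_1P+Q_1^2\). Moving \(X\) to the right:
\begin{align*}
ZXZ&=\bigl(aP^2+bPQ_1+aQ_1P+bQ_1^2\bigr)X,\\
XZ^2&=\bigl(a^2P^2+abPQ_1+abQ_1P+b^2Q_1^2\bigr)X .
\end{align*}
Since \(X\) is invertible, right multiplication by \(X^{-1}\) turns the relation into an identity in the ambient torus. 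That identity has the form
\[
c_{aa}P^2+c_{ab}PQ_1+c_{ba}Q_1P+c_{bb}Q_1^2=0,
\]
with
\begin{align*}
c_{aa}&=\mu_1+a\mu_2+a^2\mu_3, & c_{bb}&=\mu_1+b\mu_2+b^2\mu_3,\\
c_{ab}&=\mu_1+b\mu_2+ab\mu_3, & c_{ba}&=\mu_1+a\mu_2+ab\mu_3 .
\end{align*}

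Next, separate the terms by \(\Ad_X\)-eigenvalue, as in \cref{lem:puresquares}. The element \(P^2\) lies in the \(a^2\)-eigenspace and \(Q_1^2\) in the \(b^2\)-eigenspace. Both mixed products \(PQ_1\) and \(Q_1P\) lie in the \(ab\)-eigenspace. These three eigenspaces are distinct by hypothesis, and eigenvectors for distinct eigenvalues of the automorphism \(\Ad_X\) are independent. Hence three separate identities hold:
\[
c_{aa}P^2=0,\qquad c_{bb}Q_1^2=0,\qquad c_{ab}PQ_1+c_{ba}Q_1P=0 .
\]
The torus is a domain and the blocks are nonzero, so \(P^2\ne0\) and \(Q_1^2\ne0\). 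Therefore \(c_{aa}=c_{bb}=0\). This says exactly that the polynomial \(\mu_1+\mu_2T+\mu_3T^2\) vanishes at \(T=a\) and at \(T=b\).

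A nontrivial polynomial of degree at most two with the distinct roots \(a\) and \(b\) is a scalar multiple of \((T-a)(T-b)\), so it has \(\mu_3\ne0\). Normalizing \(\mu_1=1\) gives \(\mu_3=(ab)^{-1}\) and \(\mu_2=-(a+b)/(ab)=-(a^{-1}+b^{-1})\). The normalization is allowed because \(\mu_1=ab\,\mu_3\ne0\), and \(ab\ne0\) holds automatically since \(X\), \(P\), \(Q_1\) are nonzero in a domain with \(XP=aPX\). Thus any dependence is proportional to \eqref{eq:reverse-general}, which gives uniqueness.

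Existence needs one more check: with these \(\mu\)'s, \(c_{ab}=(a-b)\mu_3 a\cdot\) (up to sign) and \(c_{ba}\) are generally nonzero. So the relation holds if and only if the mixed part \(c_{ab}PQ_1+c_{ba}Q_1P\) vanishes. The assumed linear dependence guarantees that some relation holds. By the argument above, that relation must be the normalized one, so the mixed part vanishes automatically. No separate relation between \(PQ_1\) and \(Q_1P\) has to be assumed; this matches the remark in the introduction.

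The main obstacle is this last point: not presupposing a commutation between the two mixed products, and recognising that the pure-square components alone already force the coefficients. It is also where \(a\ne b\) is used, to conclude that the quadratic is determined.
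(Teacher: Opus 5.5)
Your proof is correct and is essentially the paper's argument. You separate the \(a^2\), \(ab\), \(b^2\) eigencomponents, use the domain property to force \(\mu_1+\mu_2T+\mu_3T^2\) to vanish at \(a\) and \(b\), and let the assumed dependence, rather than any commutation of \(P\) and \(Q_1\), account for the mixed part; your root-counting step replaces the paper's direct elimination showing that the leading coefficient is nonzero.

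The only blemish is the aside on the mixed coefficients: after normalization they are \(c_{ab}=(a-b)/a\) and \(c_{ba}=(b-a)/b\), but this plays no role in the logic.
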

\begin{proof}
Take an actual nonzero relation
\begin{equation}\label{eq:actual-reverse}
 A Z^2X+B ZXZ+C XZ^2=0.
\end{equation}
The pure \(P^2\), mixed \(PQ_1,Q_1P\), and pure \(Q_1^2\)
contributions have respective \(\Ad_X\)-eigenvalues
\(a^2,ab,b^2\). Multiplication by \(X\) does not change them. Thus the
three groups separate; in particular, neither mixed group can contaminate
a pure group. With \(X\) moved to the right, the pure contributions are
\[
 \begin{array}{c|ccc}
 &Z^2X&ZXZ&XZ^2\\ \hline
 P^2&P^2X&aP^2X&a^2P^2X\\
 Q_1^2&Q_1^2X&bQ_1^2X&b^2Q_1^2X.
 \end{array}
\]
Because the torus is a domain these pure factors are nonzero. Hence
\begin{equation}\label{eq:reverse-coeff}
 A+Ba+Ca^2=0,\qquad A+Bb+Cb^2=0.
\end{equation}
If \(A=0\), division by \(a,b\) and subtraction give \(C=0\), then
\(B=0\), a contradiction. Normalize \(A=1\). The equations imply
\[
 B=-C(a+b),\qquad 1-Cab=0,
\]
so \(C=(ab)^{-1}\) and \(B=-(a^{-1}+b^{-1})\).
The coefficients of the actual relation \eqref{eq:actual-reverse} are
therefore uniquely fixed, proving \eqref{eq:reverse-general}. No
commutation relation between \(P\) and \(Q_1\) was assumed: the mixed
part satisfies the result because it was part of an existing relation.
\end{proof}

\begin{corollary}[Reverse Serre relation]\label{cor:reverse}
For a source--sink edge \(r\to s\),
\begin{equation}\label{eq:reverse}
 Z_s^2Z_r-(t_0+t_0^{-1}t^{-1})Z_sZ_rZ_s+t^{-1}Z_rZ_s^2=0.
\end{equation}
\end{corollary}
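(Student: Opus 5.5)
The plan is to apply \cref{lem:reverse} with \(X=Z_r\), \(Z=Z_s\), \(a=t_0t\), \(b=t_0^{-1}\). By \cref{thm:two}, \(Z_s\) splits as \(P+Q_1\), two nonzero monomial blocks with \(Z_rP=aPZ_r\) and \(Z_rQ_1=bQ_1Z_r\). With these values, \(a^{-1}+b^{-1}=t_0^{-1}t^{-1}+t_0\) and \((ab)^{-1}=t^{-1}\), so \eqref{eq:reverse-general} is literally \eqref{eq:reverse}. It therefore suffices to verify the hypotheses of \cref{lem:reverse} in the ambient based torus over \(K\).

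\emph{Step 1 (distinct eigenvalues).} We have \(a^2=t_0^2t^2\), \(ab=t\), and \(b^2=t_0^{-2}\). Since \(t_0^{1/2}\) and \(u\) are algebraically independent, these three monomials in \(t_0,t\) are pairwise distinct. This matches the argument already used in \cref{lem:puresquares}.

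\emph{Step 2 (domain).} The based quantum torus \(\TT_K\) is a twisted group algebra of the free abelian group \(\PP\). Ordering monomials compatibly with a total group order, leading terms multiply to a nonzero leading term, so \(\TT_K\) has no zero divisors. In particular \(P^2\) and \(Q_1^2\) are nonzero.

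\emph{Step 3 (linear dependence).} The three words \(Z_s^2Z_r\), \(Z_sZ_rZ_s\), \(Z_rZ_s^2\) lie in \(\GG_R(Q)\otimes_RK\) and are homogeneous of degree \(\alpha_r+2\alpha_s\), because generators are homogeneous and the product respects \(\deg_Q\) (\cref{thm:graded-twist}). By \cref{cor:smalldim} this component is two-dimensional over \(K\). Three vectors in it are therefore linearly dependent, which supplies the nontrivial relation that \cref{lem:reverse} requires.

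With the hypotheses in place, \cref{lem:reverse} fixes the normalized coefficients uniquely, which gives \eqref{eq:reverse}.

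The main point needing care is Step 3. The words are formed with the toroidal product inside the generated subalgebra, so the dependence holds in \(\GG_K(Q)\) and hence in the ambient torus, where the block separation is performed. We never need inverses or the blocks themselves to lie in \(\GG_K(Q)\), because \cref{lem:reverse} works entirely in the ambient torus. The mixed \(PQ_1\) and \(Q_1P\) terms need no control: they sit in the \(ab\)-eigenspace, which is separated from the pure ones by Step 1, so only the two pure equations \eqref{eq:reverse-coeff} determine the coefficients.
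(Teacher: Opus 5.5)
Your proposal is correct and is essentially the paper's own proof: the paper likewise gets linear dependence of the three words from the two-dimensional component of degree \(\alpha_r+2\alpha_s\) in \cref{cor:smalldim}, nonvanishing of both blocks from \cref{thm:two}, and pairwise distinctness of \(a^2,ab,b^2\) from genericity, and then applies \cref{lem:reverse} with \(a=t_0t\), \(b=t_0^{-1}\). Your leading-term argument that the based torus is a domain only makes explicit a hypothesis the paper uses without comment.
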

\begin{proof}
The three words have degree \(\alpha_r+2\alpha_s\) and are linearly
dependent by \cref{cor:smalldim}. Both blocks are nonzero by
\cref{thm:two}. Moreover,
\[
 a^2=t_0^2t^2,\qquad ab=t,\qquad b^2=t_0^{-2}
\]
are pairwise distinct over \(K\). Apply \cref{lem:reverse}.
\end{proof}

\section{The multiparameter presentation and realization}\label{sec:global}
\subsection{Highest terms and nonadjacent generators}
\begin{lemma}[Highest-term ratio]\label{lem:highratio}
Suppose \(Z_iZ_j=cZ_jZ_i\) for a scalar \(c\) and let \(m_i,m_j\)
be the highest monomials. Then
\[
 c=t_0^{D_0(m_i,m_j)}t^{D^{(2)}(m_i,m_j)}.
\]
For nonadjacent distinct simple roots, \(Z_iZ_j=Z_jZ_i\).
\end{lemma}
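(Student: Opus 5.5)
The plan is to compare highest terms in a suitable monomial order on the based torus, and then treat the nonadjacent case with a one-dimensional homogeneous component.

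\emph{Highest-term ratio.} By \cref{lem:lower}, every monomial of \(Z_i\) has the form \(m_i\prod A_{k,r}^{-v_{k,r}}\) with \(v\in\N\) of finite support. By \cref{lem:Aindependent}, the exponent vector \(v\) is uniquely determined. I would fix the partial order \(m\preceq m'\) if \(m'm^{-1}\) is a product of the \(A_{k,r}\) with nonnegative exponents. Independence of the \(a_{k,r}\) makes this a genuine partial order.

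In the based product \eqref{eq:based-product}, the product \(m*n\) is a nonzero scalar times the commutative monomial \(mn\). Hence the terms of \(Z_iZ_j\) are scalar multiples of the commutative products of a monomial of \(Z_i\) with one of \(Z_j\). The commutative monomial \(m_im_j\) arises only from the pair \((m_i,m_j)\): any other pair gives \(m_im_j\prod A^{-v}\) with \(v\neq0\), which is a different exponent vector by \cref{lem:Aindependent}. Both highest coefficients equal \(1\). So the coefficient of \(m_im_j\) is
\begin{itemize}
\item \(t_0^{D_0(m_i,m_j)/2}u^{D^{(2)}(m_i,m_j)}\) in \(Z_iZ_j\);
\item \(t_0^{D_0(m_j,m_i)/2}u^{D^{(2)}(m_j,m_i)}\) in \(Z_jZ_i\).
\end{itemize}
Comparing these coefficients in the hypothesis \(Z_iZ_j=cZ_jZ_i\), and using skew-symmetry of \(D_0\) and \(D^{(2)}\), gives
\[
c=t_0^{D_0(m_i,m_j)}u^{2D^{(2)}(m_i,m_j)}=t_0^{D_0(m_i,m_j)}t^{D^{(2)}(m_i,m_j)}.
\]

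\emph{Nonadjacent generators.} Take distinct simple roots \(i,j\) with \(i\not\sim j\). Both \(Z_iZ_j\) and \(Z_jZ_i\) are homogeneous of degree \(\alpha_i+\alpha_j\) by \cref{prop:characters}. By \cref{cor:smalldim}, this component of \(\GG_R(Q)\otimes_RK\) is one-dimensional.

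Each product is nonzero, since the ambient torus is a domain (alternatively, its highest coefficient computed above is a unit). Therefore \(Z_iZ_j=cZ_jZ_i\) for some \(c\in K^\times\). The first part then gives \(c=t_0^{D_0(m_i,m_j)}t^{D^{(2)}(m_i,m_j)}\), and \eqref{eq:nonadjbase} makes both exponents zero, so \(c=1\).

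\emph{Expected obstacle.} The main subtlety is in the first part: one must be sure that no cancellation or collision hits the monomial \(m_im_j\) in either product. This is exactly what \cref{lem:Aindependent} and the lower-support cone guarantee. The argument does not need any further information about the lower terms of \(Z_i\) and \(Z_j\), nor any thinness assumption.
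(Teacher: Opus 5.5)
Your proposal is correct and follows essentially the same route as the paper. It isolates the coefficient of \(m_im_j\) in both products using the lower-support cone and \cref{lem:Aindependent}, takes the ratio of the two based-torus factors, and for nonadjacent roots combines the one-dimensional component of degree \(\alpha_i+\alpha_j\) with \eqref{eq:nonadjbase}; the auxiliary partial order you introduce is harmless but not needed.
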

\begin{proof}
The highest monomial of each factor has coefficient one. A term obtained
by lowering either factor cannot produce \(m_im_j\): the ratio from
that highest product would be a product of inverse root monomials with
nonnegative exponents. It is nontrivial by \cref{lem:Aindependent},
or simply by its strictly negative ordinary-weight height. Thus the
coefficient of \(m_im_j\) comes only from the two highest terms. Their
coefficients in the two product orders are inverse based-torus factors.
Their ratio is the stated scalar.

For nonadjacent vertices the two products are nonzero and lie in the
one-dimensional component of degree \(\alpha_i+\alpha_j\), so such
an equality exists. By \eqref{eq:nonadjbase}, its scalar is \(1\).
\end{proof}
Thus the nonadjacent parameters are trivial for the present specialization.
General Cartan-type matrices may allow nontrivial reciprocal parameters on
nonedges, but they do not occur in this bipartite realization.

\subsection{Reconstruction of the Cartan parameters}
Let \(C=(c_{ij})\) be the finite Cartan matrix. In the
Pei--Hu--Rosso convention, a Cartan-type parameter matrix satisfies
\(q_{ij}q_{ji}=q_{ii}^{c_{ij}}\) for \(i\ne j\). Its positive part
has relations
\begin{align}
 E_iE_j-q_{ij}E_jE_i&=0&& (c_{ij}=0),\label{eq:HPRzero}\\
 E_i^2E_j-(1+q_{ii})q_{ij}E_iE_jE_i
                 +q_{ii}q_{ij}^2E_jE_i^2&=0&& (c_{ij}=-1).
                 \label{eq:HPRserre}
\end{align}
See \cite{HPR}. Put \(v=t_0u\). The parameters in
\eqref{eq:intro-q} may equivalently be written
\begin{equation}\label{eq:qcompact}
 q_{ii}=v^2,\qquad q_{ij}=v^{c_{ij}}u^{\om(\alpha_i,\alpha_j)}
                      \quad(i\ne j).
\end{equation}
For an edge the ratio of the two local eigenvalues is
\((t_0t)/(t_0^{-1})=v^2\), and the second eigenvalue is
\(q_{rs}=t_0^{-1}\). Reversing the edge gives
\(q_{sr}=v^{-2}q_{rs}^{-1}\), as in \eqref{eq:intro-q}.

\begin{proposition}[Ordered Cartan-parameter check]\label{prop:hprcompat}
For an oriented adjacent edge \(i\to j\), put
\[
q_{ii}=q_{jj}=t_0^2t,\qquad
q_{ij}=t_0^{-1},\qquad q_{ji}=t_0^{-1}t^{-1}.
\]
Then
\[
q_{ij}q_{ji}=q_{ii}^{-1}=q_{ii}^{a_{ij}},
\]
and the standard simply-laced Cartan-type Serre coefficients are
\[
(1+q_{ii})q_{ij}=t_0^{-1}+t_0t,\qquad
q_{ii}q_{ij}^2=t,
\]
and
\[
(1+q_{jj})q_{ji}=t_0+t_0^{-1}t^{-1},\qquad
q_{jj}q_{ji}^2=t^{-1}.
\]
Thus the forward and reverse toroidal Serre relations use the same
ordered matrix \(\mathbf q\), with no interchange of \(q_{ij}\) and
\(q_{ji}\).
\end{proposition}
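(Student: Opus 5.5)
This is a direct verification from the defining formulas, carried out in three steps.

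\emph{Cartan-type condition.} For the adjacent pair \(i\to j\) we have \(a_{ij}=-1\). Multiplying gives
\[
q_{ij}q_{ji}=t_0^{-1}\cdot t_0^{-1}t^{-1}=t_0^{-2}t^{-1}=(t_0^2t)^{-1}=q_{ii}^{-1}.
\]
Since \(q_{ii}=q_{jj}\), the same product also equals \(q_{jj}^{a_{ji}}\). So the condition \(q_{ij}q_{ji}=q_{ii}^{c_{ij}}\) holds in both orders. Equivalently, one can check it from \eqref{eq:qcompact}: with \(v=t_0u\) and \(\om(\alpha_i,\alpha_j)=1\), we get \(q_{ij}=v^{-1}u=t_0^{-1}\) and \(q_{ji}=v^{-1}u^{-1}=t_0^{-1}t^{-1}\). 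Their product is \(v^{-2}\), because \(\om\) is antisymmetric.

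\emph{Forward coefficients.} Next I would substitute into \eqref{eq:HPRserre} with \((i,j)\) in the given order:
\[
(1+q_{ii})q_{ij}=(1+t_0^2t)t_0^{-1}=t_0^{-1}+t_0t,\qquad q_{ii}q_{ij}^2=t_0^2t\cdot t_0^{-2}=t.
\]
These match \eqref{eq:forward} term by term under \(E_i\mapsto Z_r=X\) and \(E_j\mapsto Z_s=Z\). Here \(-(a+b)\) is the middle coefficient and \(ab=t\) is the last one.

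\emph{Reverse coefficients.} With \((j,i)\) we get
\[
(1+q_{jj})q_{ji}=(1+t_0^2t)t_0^{-1}t^{-1}=t_0^{-1}t^{-1}+t_0,\qquad q_{jj}q_{ji}^2=t_0^2t\cdot t_0^{-2}t^{-2}=t^{-1}.
\]
These match \eqref{eq:reverse}, whose coefficients are \(a^{-1}+b^{-1}=t_0^{-1}t^{-1}+t_0\) and \((ab)^{-1}=t^{-1}\).

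The only point needing care is orientation bookkeeping. \eqref{eq:HPRserre} for the pair \((j,i)\) has the form \(E_j^2E_i-\cdots\), which corresponds to \(Z_s^2Z_r\). This is exactly the word order in \cref{cor:reverse}. So both relations are produced by the single matrix \(\mathbf q\), using \(q_{ij}\) for \(r\to s\) and \(q_{ji}\) for the reverse, with no transposition. There is no genuine obstacle beyond checking this matching of word orders against the Pei--Hu--Rosso convention.
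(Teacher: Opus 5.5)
Your proposal is correct and follows the paper's proof, which is likewise a direct substitution of the four parameter values into the Cartan constraint and the two pairs of Serre coefficients. Your additional matching of these coefficients against the word orders in \eqref{eq:forward} and \eqref{eq:reverse}, and your cross-check via \eqref{eq:qcompact}, are accurate; they make explicit the statement's final sentence, which the paper otherwise uses in \cref{prop:Cartan-rel}.
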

\begin{proof}
All four identities follow by direct substitution.  The first is
\[
t_0^{-1}(t_0^{-1}t^{-1})
=t_0^{-2}t^{-1}
=(t_0^2t)^{-1},
\]
and the remaining identities are immediate expansions.
\end{proof}

\begin{remark}[Nonadjacent entries]
If \(a_{ij}=0\), the Euler-correction identity gives zero toroidal
exponent on nonadjacent simple roots.  Hence
\[
q_{ij}=q_{ji}=1,
\]
so the nonadjacent Cartan relation is ordinary commutation.
\end{remark}

\begin{proposition}[Cartan relations]\label{prop:Cartan-rel}
The parameters \eqref{eq:qcompact} satisfy the Cartan constraint, and
the assignment \(E_i\mapsto Z_i\) satisfies all relations
\eqref{eq:HPRzero}--\eqref{eq:HPRserre}.
\end{proposition}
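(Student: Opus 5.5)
The argument has three parts. First I check the Cartan constraint for \eqref{eq:qcompact}. Then I verify the relations \eqref{eq:HPRzero} on nonedges. Finally I verify \eqref{eq:HPRserre} on edges, in both orientations, by matching the coefficients with those already obtained in \cref{prop:forward} and \cref{cor:reverse}.

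\emph{Cartan constraint.} For \(i\ne j\), antisymmetry of \(\om\) gives
\[
q_{ij}q_{ji}=v^{2c_{ij}}u^{\om(\alpha_i,\alpha_j)+\om(\alpha_j,\alpha_i)}=v^{2c_{ij}}=q_{ii}^{c_{ij}}.
\]
I also check that \eqref{eq:qcompact} reproduces \eqref{eq:intro-q}. For an edge \(i\to j\) we have \(c_{ij}=-1\) and \(\om=1\), so
\[
q_{ij}=(t_0u)^{-1}u=t_0^{-1},\qquad q_{ji}=(t_0u)^{-1}u^{-1}=t_0^{-1}t^{-1}.
\]
For nonadjacent \(i\ne j\) we have \(c_{ij}=0\) and \(\om=0\), so \(q_{ij}=1\). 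This is the computation of \cref{prop:hprcompat}, now done uniformly.

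\emph{Nonadjacent relations.} For \(c_{ij}=0\) the required relation is \(Z_iZ_j-Z_jZ_i=0\), since \(q_{ij}=1\). This is exactly the second assertion of \cref{lem:highratio}. That assertion rests on the one-dimensionality of the degree-\((\alpha_i+\alpha_j)\) component from \cref{cor:smalldim}, together with the vanishing highest pairings \eqref{eq:nonadjbase}.

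\emph{Adjacent relations.} Every edge of \(Q\) goes from a source \(r\in I_+\) to a sink \(s\in I_-\). So for \(c_{ij}=-1\) there are two cases, \((i,j)=(r,s)\) and \((i,j)=(s,r)\). In the first case \cref{prop:hprcompat} gives
\[
(1+q_{rr})q_{rs}=t_0t+t_0^{-1},\qquad q_{rr}q_{rs}^2=t.
\]
With these values \eqref{eq:HPRserre} is literally \eqref{eq:forward}, with \(X=Z_r\) and \(Z=Z_s\). In the second case the coefficients are \(t_0+t_0^{-1}t^{-1}\) and \(t^{-1}\), so \eqref{eq:HPRserre} with \(E_s^2E_r\) first is literally \eqref{eq:reverse}.

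The delicate step is the ordering of the words. I must make sure that \eqref{eq:HPRserre} for the pair \((s,r)\) reads
\[
E_s^2E_r-(\cdots)E_sE_rE_s+(\cdots)E_rE_s^2,
\]
and that \cref{cor:reverse} is stated in that same order, not in its mirror image \(E_rE_s^2\) first. This is precisely where an interchange of \(q_{rs}\) and \(q_{sr}\) could creep in. The check has two parts. First, the uniqueness clause of \cref{lem:reverse} fixes the normalized relation once the coefficient of \(Z_s^2Z_r\) is set to \(1\). Second, its coefficient \(a^{-1}+b^{-1}=t_0t^{-1}\cdot t^{\,}\!\)\(\ldots\) reduces to \(t_0+t_0^{-1}t^{-1}\), with \(a=t_0t\) and \(b=t_0^{-1}\). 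This matches \((1+q_{ss})q_{sr}\) exactly, and \((ab)^{-1}=t^{-1}\) matches \(q_{ss}q_{sr}^2\).

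Since the Serre relations in \cref{sec:local} were proved for every source--sink edge, these two cases exhaust all adjacent relations. The assignment \(E_i\mapsto Z_i\) therefore respects every defining relation of \(U_{\mathbf q}^+(\mathfrak g)\).
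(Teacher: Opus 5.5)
Your proof is correct and follows the paper's argument essentially step for step. The Cartan constraint comes from skewness of \(\om\), the nonadjacent relations from \cref{lem:highratio}, and both orientations of each edge are matched coefficient by coefficient with \cref{prop:forward} and \cref{cor:reverse}. The one blemish is the garbled intermediate line for the reverse middle coefficient, which should read \(a^{-1}+b^{-1}=t_0^{-1}t^{-1}+t_0\); your final value is right.
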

\begin{proof}
Skewness of \(\om\) gives
\(q_{ij}q_{ji}=v^{2c_{ij}}=q_{ii}^{c_{ij}}\).
On an edge \(r\to s\),
\begin{align*}
 (1+q_{rr})q_{rs}&=t_0^{-1}+t_0t,&
 q_{rr}q_{rs}^2&=t,\\
 (1+q_{ss})q_{sr}&=t_0+t_0^{-1}t^{-1},&
 q_{ss}q_{sr}^2&=t^{-1}.
\end{align*}
The claims follow from \cref{prop:forward,cor:reverse,lem:highratio}.
\end{proof}

\subsection{A local cocycle and a PBW form}
Let \(\bar q_{ii}=v^2\) and \(\bar q_{ij}=v^{c_{ij}}\) for
\(i\ne j\). The ratio
\[
 \chi_{ij}=q_{ij}/\bar q_{ij}=u^{\om(\alpha_i,\alpha_j)}
\]
is an \(\OO\)-unit and is \(1\) at \(u=1\). Fix a total order
on the simple roots and define
\begin{equation}\label{eq:sigma}
 \sigma\left(\sum a_i\alpha_i,\sum b_j\alpha_j\right)
       =\prod_{i<j}\chi_{ij}^{a_ib_j}.
\end{equation}
This is an \(\OO^\times\)-valued bicharacter, with
\[
 \frac{\sigma(\alpha_i,\alpha_j)}{\sigma(\alpha_j,\alpha_i)}
       =\chi_{ij},\qquad \sigma\big|_{u=1}=1.
\]
Unlike the symmetric comparison \eqref{eq:kappa}, this cocycle is defined
over \(\OO\) without adjoining \(w\).

\begin{lemma}[Cocycle convention]\label{lem:cocycle}
Twisting the homogeneous product of the one-parameter positive part by
\(x\star y=\sigma(|x|,|y|)xy\) changes its parameters to
\(q_{ij}=\bar q_{ij}\sigma_{ij}/\sigma_{ji}\), where
\(\sigma_{ij}=\sigma(\alpha_i,\alpha_j)\).
\end{lemma}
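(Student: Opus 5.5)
The plan is a direct computation on generators followed by a substitution into the one-parameter Serre relations; everything reduces to the bicharacter property of \(\sigma\). First, check that the twisted product \(x\star y=\sigma(|x|,|y|)xy\) is associative. Since \(\sigma\) is a bicharacter on \(\LL\), it satisfies
\[
\sigma(\alpha,\beta)\sigma(\alpha+\beta,\gamma)=\sigma(\alpha,\beta+\gamma)\sigma(\beta,\gamma).
\]
This is the same identity used for \(\kappa\) in \cref{sec:twist}, so \(\star\) is associative on the \(\LL\)-graded positive part. Next, for a word \(x_1\star\cdots\star x_\ell\) in homogeneous elements, an induction gives
\[
x_1\star\cdots\star x_\ell=\prod_{a<b}\sigma(|x_a|,|x_b|)\,x_1\cdots x_\ell .
\]
Bilinearity is what lets the scalar for the partial product be computed degree-wise.

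Second, apply this to the defining relations of the one-parameter positive part with parameters \(\bar q\). A commutation relation \(E_iE_j=\bar q_{ij}E_jE_i\) becomes \(E_i\star E_j=\sigma_{ij}E_iE_j\), and similarly \(E_j\star E_i=\sigma_{ji}E_jE_i\). Hence \(E_i\star E_j=\bar q_{ij}(\sigma_{ij}/\sigma_{ji})E_j\star E_i\), which gives the rule \(q_{ij}=\bar q_{ij}\sigma_{ij}/\sigma_{ji}\).

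For the Serre relation, compute the three cubic words in the \(\star\)-product. Each picks up \(\sigma_{ii}\) once from the \(i\)-\(i\) pair, together with one of \(\sigma_{ij}^2\), \(\sigma_{ij}\sigma_{ji}\) or \(\sigma_{ji}^2\), according to the positions of the \(i\)'s relative to \(j\). Rewrite the one-parameter relation
\[
E_i^2E_j-(1+\bar q_{ii})\bar q_{ij}E_iE_jE_i+\bar q_{ii}\bar q_{ij}^2E_jE_i^2=0
\]
in \(\star\)-words and divide by \(\sigma_{ii}\sigma_{ij}^2\). The coefficients become \((1+\bar q_{ii})\bar q_{ij}\sigma_{ji}/\sigma_{ij}\) and \(\bar q_{ii}\bar q_{ij}^2\sigma_{ji}^2/\sigma_{ij}^2\). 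This is exactly \eqref{eq:HPRserre} with \(q_{ii}=\bar q_{ii}\) (since \(\sigma_{ii}/\sigma_{ii}=1\)) and \(q_{ij}\) as above. One also checks that the Cartan constraint \(q_{ij}q_{ji}=q_{ii}^{c_{ij}}\) is preserved, because the ratios \(\sigma_{ij}/\sigma_{ji}\) and \(\sigma_{ji}/\sigma_{ij}\) cancel.

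The only real obstacle is the direction of the ratio, i.e.\ whether one gets \(\sigma_{ij}/\sigma_{ji}\) or its inverse. This is fixed by the Serre computation: the reciprocal would contradict the coefficient of \(E_iE_jE_i\). I would verify it explicitly on the three words \(E_i^2E_j\), \(E_iE_jE_i\), \(E_jE_i^2\), as the text announces, and note that with \eqref{eq:sigma} one has \(\sigma_{ij}/\sigma_{ji}=\chi_{ij}\) for \(i<j\) and \(\chi_{ji}^{-1}=\chi_{ij}\) for \(i>j\), by skewness of \(\om\). This recovers \eqref{eq:qcompact}.
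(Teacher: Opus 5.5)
\textbf{There is a genuine error in the Serre step.} Your overall strategy is the paper's: associativity from the bicharacter identity, the product formula for \(\star\)-words, then transport of the relations. Your two-term computation \(E_i\star E_j=\sigma_{ij}\bar q_{ij}E_jE_i=\bar q_{ij}(\sigma_{ij}/\sigma_{ji})\,E_j\star E_i\) is correct.

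The Serre step, which you yourself say fixes the direction of the ratio, is done backwards. Rewriting ordinary words as \(\star\)-words means dividing by the cocycle factors given by your own word formula:
\begin{align*}
E_i^2E_j&=(\sigma_{ii}\sigma_{ij}^2)^{-1}\,E_i\star E_i\star E_j,\\
E_iE_jE_i&=(\sigma_{ii}\sigma_{ij}\sigma_{ji})^{-1}\,E_i\star E_j\star E_i,\\
E_jE_i^2&=(\sigma_{ii}\sigma_{ji}^2)^{-1}\,E_j\star E_i\star E_i.
\end{align*}
After substitution you must \emph{multiply} by \(\sigma_{ii}\sigma_{ij}^2\). The coefficients then become \((1+\bar q_{ii})\bar q_{ij}\sigma_{ij}/\sigma_{ji}\) and \(\bar q_{ii}\bar q_{ij}^2(\sigma_{ij}/\sigma_{ji})^2\).

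The coefficients you display are \((1+\bar q_{ii})\bar q_{ij}\sigma_{ji}/\sigma_{ij}\) and \(\bar q_{ii}\bar q_{ij}^2\sigma_{ji}^2/\sigma_{ij}^2\). They come from substituting \(E_i^2E_j=\sigma_{ii}\sigma_{ij}^2\,E_i\star E_i\star E_j\) and then dividing. They are not \eqref{eq:HPRserre} with \(q_{ij}=\bar q_{ij}\sigma_{ij}/\sigma_{ji}\); they are \eqref{eq:HPRserre} with the reciprocal parameter \(\bar q_{ij}\sigma_{ji}/\sigma_{ij}\).

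On an edge \(\sigma_{ij}/\sigma_{ji}=\chi_{ij}=u^{\pm1}\), so the two versions genuinely differ. As written, your Serre computation contradicts both the lemma and your own two-term computation. Your remark that ``the reciprocal would contradict the coefficient of \(E_iE_jE_i\)'' is the opposite of what your displayed coefficients show. With the substitution corrected, the two computations agree. Your final check \(\sigma_{ij}/\sigma_{ji}=\chi_{ij}\), valid for either order of \(i,j\), then recovers \eqref{eq:qcompact}.

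\textbf{A smaller omission: you do not identify the presentations.} You only show that the twisted generators satisfy the \(\mathbf q\)-relations. That gives a surjection from the \(\mathbf q\)-presented algebra onto the twisted algebra, but saying the parameters are \emph{changed to} \(\mathbf q\) requires the two presentations to coincide. The paper supplies this with an inverse twist. Twisting the root-graded \(\mathbf q\)-presented algebra by \(\sigma^{-1}\) makes its generators satisfy the \(\bar{\mathbf q}\)-relations. The resulting graded map, twisted back by \(\sigma\), and your map fix the generators, so they are mutually inverse.
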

\begin{proof}
For an adjacent pair, bicharacter multiplicativity gives
\begin{align*}
 E_i\star E_i\star E_j&=\sigma_{ii}\sigma_{ij}^2E_i^2E_j,\\
 E_i\star E_j\star E_i&=\sigma_{ii}\sigma_{ij}\sigma_{ji}E_iE_jE_i,\\
 E_j\star E_i\star E_i&=\sigma_{ii}\sigma_{ji}^2E_jE_i^2.
\end{align*}
Multiply the ordinary Serre relation by \(\sigma_{ii}\sigma_{ij}^2\)
and rewrite the products. Its middle coefficient becomes
\((1+\bar q_{ii})\bar q_{ij}\sigma_{ij}/\sigma_{ji}\), and its
last coefficient becomes
\(\bar q_{ii}\bar q_{ij}^2(\sigma_{ij}/\sigma_{ji})^2\).
This is \eqref{eq:HPRserre} with the stated parameter. The nonadjacent
two-term relation transforms by the same ratio. Applying the inverse twist
recovers the original relations; hence this calculation identifies the
presentations, not merely a quotient of one presentation.
\end{proof}

\begin{proposition}[PBW form at the special fiber]\label{prop:PBW}
There is an \(\OO\)-algebra \(U_{\mathbf q,\OO}^+\) with a free
ordered PBW \(\OO\)-basis, generic fiber \(U_{\mathbf q}^+\), and
special fiber \(U_{t_0}^+(\mathfrak g)\).
\end{proposition}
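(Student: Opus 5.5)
The plan is to construct \(U_{\mathbf q,\OO}^+\) as a cocycle twist of an integral form of the one-parameter positive part, using the bicharacter \(\sigma\) of \eqref{eq:sigma}, which is \(\OO^\times\)-valued and equal to \(1\) at \(u=1\).

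\emph{Step 1: the one-parameter integral form.} Set \(v=t_0u\in\OO^\times\). Let \(\bar U^+_{\OO}\) be the \(\OO\)-algebra generated by \(E_i\) subject to the one-parameter relations \eqref{eq:HPRzero}--\eqref{eq:HPRserre} with parameters \(\bar q\): ordinary commutation on nonedges, and the Serre relation with coefficients \(v+v^{-1}\) and \(1\) (after the usual normalization) on edges. Since \(v\) is a unit of \(\OO\), the relations have \(\OO\)-coefficients. I will show that \(\bar U^+_{\OO}\) is the base change of Lusztig's \(\Z[v^{\pm1}]\)-form. That form carries a free PBW basis of ordered products of (divided) root vectors for any convex order; this is Lusztig's braid-group construction. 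Because the root vectors \(T_{w}(E_i)\) are \(\Z[v^{\pm1}]\)-integral and the nondivided monomials differ from the divided ones only by quantum factorials, I must check that the nondivided PBW monomials still span an \(\OO\)-lattice.

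\emph{Step 2 (main obstacle): integrality of the nondivided form.} The quantum factorials \([n]_v!\) are \(\OO\)-units. This holds because \(v=t_0u\) with \(t_0\) transcendental over \(F\)'s prime field direction, and the maximal ideal \((u-1)\) meets \(F[u^{\pm1}]\) so that \([n]_v\) reduces at \(u=1\) to \([n]_{t_0}\neq0\) in \(F\). Hence divided and nondivided PBW bases generate the same \(\OO\)-module. To show the nondivided PBW monomials form a free basis of \(\bar U^+_{\OO}\), I will first obtain spanning from the Levendorskii--Soibelman straightening relations, whose coefficients are Laurent polynomials in \(v\). I will then obtain independence by comparing with the generic fiber \(U^+_v\) over \(\OO\)'s fraction field \(K\), where the PBW basis is classical. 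Torsion-freeness follows because the spanning set maps to a basis of the generic fiber. The special fiber is obtained by setting \(u=1\), which sends \(v\mapsto t_0\) and yields the presentation of \(U^+_{t_0}(\mathfrak g)\) over \(F\). The PBW basis reduces to the PBW basis there, so the special fiber is exactly \(U^+_{t_0}(\mathfrak g)\) rather than a proper quotient.

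\emph{Step 3: twisting.} Define \(U^+_{\mathbf q,\OO}\) as \(\bar U^+_{\OO}\) with product \(x\star y=\sigma(|x|,|y|)xy\); this product is associative because \(\sigma\) is a bicharacter. By \cref{lem:cocycle}, its presentation is that of \eqref{eq:HPRzero}--\eqref{eq:HPRserre} with \(q_{ij}=\bar q_{ij}\chi_{ij}\), which is \eqref{eq:qcompact}. Each \(\star\)-ordered PBW monomial equals a unit in \(\OO^\times\) times the corresponding untwisted monomial, since root vectors are homogeneous. So the free ordered PBW basis transfers. Tensoring with \(K\) gives \(U^+_{\mathbf q}\), using the presentation from \cref{lem:cocycle} and that \(\sigma\) extends to \(K\). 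Reducing mod \(u-1\) kills the twist because \(\sigma|_{u=1}=1\), and the special fiber is \(U^+_{t_0}(\mathfrak g)\) by Step 2.
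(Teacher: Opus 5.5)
Your overall strategy matches the paper's. You build a one-parameter local form at \(v=t_0u\) with a free ordered PBW basis; the quantum factorials are \(\OO\)-units, as you say, because \([n]_v\) reduces to \([n]_{t_0}\neq0\) at \(u=1\). You then twist by \(\sigma\), which rescales ordered PBW monomials by \(\OO\)-units and is trivial at \(u=1\). Steps 1 and 3 are fine.

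The gap is in Step 2. You define \(\bar U^+_{\OO}\) by generators and Serre relations over \(\OO\), and you propose to get spanning from the Levendorskii--Soibelman straightening relations. Those identities are established in \(U^+_v\) over \(K\), or in Lusztig's form, which is by definition a subring of the generic algebra. In a presented \(\OO\)-algebra, a generic identity is only known to hold after multiplication by some nonzero \(a\in\OO\), i.e.\ modulo \(\OO\)-torsion. Using straightening inside \(\bar U^+_{\OO}\) therefore presupposes that \(\bar U^+_{\OO}\to U^+_v\otimes K\) is injective. That is exactly the torsion-freeness your final ``spanning set maps to a basis'' step was meant to deliver, so the argument is circular. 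Your claim that \(\bar U^+_{\OO}\) is the base change of Lusztig's form rests on the same unproved point. Lusztig's form is generated by divided powers inside the generic algebra, and nothing in your argument identifies it with an algebra presented over \(\OO\).

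There are two easy repairs.

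\emph{Follow the paper and never present over \(\OO\).} Take the \(\OO\)-span of the ordered PBW vectors inside \(U^+_v\) over \(K\). It is closed under multiplication because the structure constants have only quantum-normalizing denominators, which are \(\OO\)-units. The statement does not require an \(\OO\)-presentation at all.

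\emph{Keep your presentation but replace straightening by Nakayama.}
\begin{enumerate}
\item Each degree-\(\gamma\) piece of \(\bar U^+_{\OO}\) is a finitely generated module over the local ring \(\OO\), since it is spanned by finitely many words.
\item By right exactness of \(\otimes_{\OO}F\), its reduction modulo \(u-1\) is the degree-\(\gamma\) piece of the Serre presentation at \(t_0\), namely \(U^+_{t_0}(\mathfrak g)\).
\item Your PBW monomials, written as \(\OO\)-polynomials in the \(E_i\), reduce to a basis there. Nakayama's lemma then gives spanning over \(\OO\).
\item Your comparison with the generic fiber gives independence.
\end{enumerate}
This second route proves slightly more than the paper does: the \(\OO\)-form is then presented by the Serre relations over \(\OO\). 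Consequently the twisted presentation in your Step 3 holds over \(\OO\) and not only generically.
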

\begin{proof}
Take a standard ordered PBW basis of the one-parameter positive part
with parameter \(v=t_0u\).  We use the local lattice spanned by these
ordered PBW vectors over \(\OO\).  The structure constants needed for
multiplication lie in the localization because their denominators are the
standard quantum normalizing factors evaluated at the generic parameter
\(v\); at \(u=1\) these specialize to nonzero rational functions of the
transcendental variable \(t_0\).  Hence the required normalizing factors
are \(\OO\)-units.  We do not claim that an arbitrary nonzero rational
function of \(v\) is a unit at the special fiber.

The ordinary PBW theorem gives freeness of this local lattice.  The
multiparameter cocycle/shuffle framework is used only to identify the
twisted generic multiplication with the Cartan-type presentation
\cite{HPR}.

Twist the multiplication by \eqref{eq:sigma}. All twist values are
\(\OO\)-units, so the underlying free module is unchanged. By
\cref{lem:cocycle} and its inverse-twist argument, the generic fiber is
the Cartan-type positive part with parameters \(\mathbf q\). At
\(u=1\), the cocycle is identically one and \(v=t_0\), proving the
special-fiber assertion. This constructs the local form; it is not an
assertion that a particular integral form over all of \(R\) was already
proved in \cite{HPR}.
\end{proof}

\subsection{Specialization and the isomorphism}
\begin{lemma}[Degree-one special fiber]\label{lem:simplefiber}
Under the Hernandez--Leclerc identification of the fiber
\eqref{eq:specialfiberG} with \(U_{t_0}^+\),
\[
 \overline Z_i\longmapsto c_iE_i,\qquad c_i\in F^\times.
\]
The assignment \(E_i\mapsto c_iE_i\) is an automorphism.
\end{lemma}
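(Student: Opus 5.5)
The lemma has two parts: the image of \(\overline Z_i\) in the special fiber is a nonzero scalar multiple of \(E_i\), and rescaling the generators is an automorphism. The first part comes from degree counting and the Hernandez--Leclerc identification; the second from homogeneity of the defining relations.

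\textbf{Locating \(\overline Z_i\).} By \cref{prop:localbasis}, the special fiber \eqref{eq:specialfiberG} is the one-parameter truncated algebra \(\mathcal K_{t_0}(\CQ)\otimes F\). The class \(\overline Z_i\) is the specialization of the fundamental class indexed by \(\varphi^{-1}(\alpha_i,0)\), and it is homogeneous of \(Q\)-degree \(\alpha_i\) by \cref{prop:characters}. Under \cite[Theorem 6.1]{HL}, fundamental classes go to rescaled dual PBW generators of their root degree. For a simple root, the dual PBW generator of degree \(\alpha_i\) is a nonzero multiple of \(E_i\). This also follows from dimensions alone: the degree-\(\alpha_i\) component has dimension \(p(\alpha_i)=1\) by \cref{thm:graded-twist} and the corresponding rank statement in \cref{prop:localbasis}. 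Its \(U^+_{t_0}\)-side is spanned by \(E_i\). Hence \(\overline Z_i\mapsto c_iE_i\) for some scalar \(c_i\).

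\textbf{\(c_i\neq0\) and \(c_i\in F\).} The element \(\overline Z_i\) is nonzero, since its highest monomial has coefficient one (compare the proof of \cref{thm:two}). The identification is an isomorphism, so \(c_i\neq0\). The identification is defined over \(\Q(t_0^{1/2})\), and after extension over \(F\). Both \(\overline Z_i\) and \(E_i\) are defined over \(F\), so \(c_i\in F^\times\). One could also cite the explicit normalization in \cite[\S7.1]{HL}, as already used in \cref{thm:two}. The main obstacle is bookkeeping here: the identification must be applied with the \(Q\)-grading matched to the root grading of \(U^+\). The dimension argument above avoids having to know the exact HL rescaling constants.

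\textbf{The automorphism.} The defining relations of \(U_{t_0}^+(\mathfrak g)\) are the quantum Serre relations. Each is homogeneous in every generator, with multidegree \(2\alpha_i+\alpha_j\) for an edge or \(\alpha_i+\alpha_j\) for a nonedge. So \(E_i\mapsto c_iE_i\) sends each relation to a nonzero scalar multiple of itself, namely \(c_i^2c_j\) or \(c_ic_j\), and therefore defines an algebra endomorphism. Equivalently, this map is the action of the torus element \((c_i)\) through the root grading. The map \(E_i\mapsto c_i^{-1}E_i\) is a two-sided inverse, so the endomorphism is an automorphism.
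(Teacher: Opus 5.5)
Your proposal is correct and follows essentially the same route as the paper. Both arguments get nonvanishing of \(\overline Z_i\) from its highest monomial having coefficient one, place \(\overline Z_i\) via the graded Hernandez--Leclerc identification in the one-dimensional \(\alpha_i\)-component spanned by \(E_i\), and use root-homogeneity of the defining relations to make the rescaling an automorphism with inverse \(E_i\mapsto c_i^{-1}E_i\). You are right that the dimension count still needs the identification to match \(\deg_Q\) with the root grading of \(U^+_{t_0}\); the paper relies on exactly that same graded compatibility.
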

\begin{proof}
The highest monomial of \(Z_i\) has coefficient one, so its reduction
is nonzero. The fundamental simple-root class has degree \(\alpha_i\)
in the graded identification of \cite[Theorem 6.1 and \S7.1]{HL}.
The \(\alpha_i\)-component of the positive part is one-dimensional.
Hence its image is a nonzero scalar multiple of \(E_i\). Every
presentation relation is root-homogeneous, so scaling the generators by
the nonzero \(c_i\) preserves all relations and has the inverse scaling.
\end{proof}

\begin{lemma}[Kernel descent to the local lattice]\label{lem:kerneldescent}
The local target \(\GG_{\OO}(Q)\) is torsion-free over the DVR \(\OO\).
If the generic homomorphism
\[
\Phi_K:U_{\mathbf q,\OO}^+\otimes_{\OO}K
\longrightarrow
\GG_{\OO}(Q)\otimes_{\OO}K
\]
has nonzero kernel, then \(\Phi_{\OO}\) has a nonzero kernel element
whose reduction modulo \(u-1\) is nonzero.
\end{lemma}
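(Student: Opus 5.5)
Two things need proof: that \(\GG_{\OO}(Q)\) is torsion-free, and that a nonzero generic kernel produces a kernel element of \(\Phi_{\OO}\) that survives at \(u=1\).

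\emph{Torsion-freeness.} This is immediate from \cref{prop:localbasis}. There \(\GG_{\OO}(Q)\) is shown to be \(\OO\)-free on the standard elements \(\mathsf S_{\mathbf a}\), and a free module over a domain is torsion-free. Equivalently, \(\GG_{\OO}(Q)\) is an \(\OO\)-submodule of the ambient based \(\OO\)-torus, which is free on Laurent monomials.

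\emph{Setting up the kernel.} First I would make \(\Phi_{\OO}\) precise. \Cref{prop:Cartan-rel} shows that the \(Z_i\) satisfy the Cartan-type relations with parameters \(\mathbf q\). \Cref{prop:PBW} gives the local form \(U_{\mathbf q,\OO}^+\). I would check that \(U_{\mathbf q,\OO}^+\) is generated over \(\OO\), as an algebra, by the \(E_i\) together with the PBW root vectors, and that these root vectors are \(\OO\)-combinations of words in the \(E_i\). Under this check, \(E_i\mapsto Z_i\) defines an \(\OO\)-algebra map \(\Phi_{\OO}\) into \(\GG_{\OO}(Q)\). Tensoring with \(K\) gives \(\Phi_K\).

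Now let \(N=\ker\Phi_{\OO}\). Since \(K\) is flat over \(\OO\), we have \(\ker\Phi_K=N\otimes_{\OO}K\). So if \(\ker\Phi_K\neq0\), choose a nonzero generic kernel element \(x\). Multiplying \(x\) by a suitable power of \(u-1\) clears denominators, since \(U_{\mathbf q,\OO}^+\) is \(\OO\)-free by \cref{prop:PBW}; this gives a nonzero element of \(N\).

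\emph{Saturation and reduction.} The key step is to divide out the largest possible power of the uniformizer \(u-1\). Write \(x=(u-1)^kx'\), with \(x'\in U_{\mathbf q,\OO}^+\) not divisible by \(u-1\). This is possible because the module is free and the PBW coordinates of \(x\) have finite valuations, not all infinite. Then \((u-1)^k\Phi_{\OO}(x')=\Phi_{\OO}(x)=0\). Torsion-freeness of the target forces \(\Phi_{\OO}(x')=0\), so \(x'\in N\).

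Since \(x'\) is not divisible by \(u-1\) and \(U_{\mathbf q,\OO}^+\) is free, some PBW coordinate of \(x'\) is a unit in \(\OO\). Hence the reduction \(\bar x'\) in the special fiber \(U_{t_0}^+\) is nonzero. Equivalently, \(N\) is saturated in the free module.

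\emph{Main obstacle.} The only delicate point is the first step: confirming that \(\Phi_{\OO}\) is defined over \(\OO\) and not merely over \(K\). The PBW root vectors of \(U_{\mathbf q,\OO}^+\) may involve normalizing denominators. I would handle this using the remark in \cref{prop:PBW} that those factors are \(\OO\)-units. Alternatively, I would restrict \(\Phi_{\OO}\) to the \(\OO\)-subalgebra generated by the \(E_i\), where the statement holds verbatim.

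Everything else is the standard DVR saturation argument. Homogeneity is not even needed, although one may take \(x\) homogeneous of some degree \(\gamma\), so that the argument takes place in finite-rank free modules.
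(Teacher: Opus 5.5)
Your proposal is correct and follows essentially the paper's argument. Torsion-freeness comes from the free local standard basis; you clear the denominators of a generic kernel element, then divide out the maximal power of the uniformizer \(u-1\), so that some PBW coordinate becomes a unit while torsion-freeness of the target keeps the element in \(\ker\Phi_{\OO}\). The differences are minor: you use exactness of localization, \(\ker\Phi_K=N\otimes_{\OO}K\), where the paper uses torsion-freeness of the target to place the cleared element in the kernel, and your discussion of why \(\Phi_{\OO}\) is defined over \(\OO\) goes beyond what the lemma, which presupposes \(\Phi_{\OO}\), requires.
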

\begin{proof}
The local standard basis established earlier makes
\(\GG_{\OO}(Q)\) a free, hence torsion-free, \(\OO\)-module.
Let \(0\ne x_K\in\ker\Phi_K\).  Clearing the finitely many denominators
of its PBW coefficients gives \(0\ne x\in U_{\mathbf q,\OO}^+\) whose
image becomes zero after localization.  Therefore some nonzero
\(a\in\OO\) satisfies
\[
a\,\Phi_{\OO}(x)=0.
\]
Torsion-freeness of the target implies \(\Phi_{\OO}(x)=0\).

Write
\[
x=\sum_{\mathbf a}f_{\mathbf a}(u)E^{\mathbf a}
\]
in the free PBW basis and let
\[
d=\min_{f_{\mathbf a}\ne0}\operatorname{ord}_{u-1}f_{\mathbf a}.
\]
Then \(x=(u-1)^dy\) for a PBW-lattice element \(y\), and at least one
coefficient of \(y\) is a unit.  Since
\[
0=\Phi_{\OO}(x)=(u-1)^d\Phi_{\OO}(y)
\]
and the target is torsion-free, \(\Phi_{\OO}(y)=0\).  The reduction of
\(y\) modulo \(u-1\) is nonzero by construction.
\end{proof}

\begin{theorem}[Generic realization]\label{thm:main}
For every finite simply-laced bipartite quiver \(Q\),
\begin{equation}\label{eq:mainiso}
 \Phi:U_{\mathbf q}^+(\mathfrak g)\xrightarrow{\ \sim\ }
              \GG_R(Q)\otimes_RK,\qquad E_i\longmapsto Z_i.
\end{equation}
The parameters are given by \eqref{eq:intro-q}. Thus the isomorphism is
over \(K\), without the auxiliary scalar extension \(K'\).
\end{theorem}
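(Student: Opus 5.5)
The plan is to build the homomorphism over \(\OO\), show it is injective on the generic fiber by a specialization argument, and then get surjectivity from the graded dimensions.

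\emph{Well-definedness.} By \cref{prop:Cartan-rel}, the assignment \(E_i\mapsto Z_i\) respects every defining relation \eqref{eq:HPRzero}--\eqref{eq:HPRserre} of \(U_{\mathbf q}^+\). All coefficients of these relations are monomials in \(t_0^{\pm1/2}\) and \(u^{\pm1}\), so they are units of \(\OO\). Hence the assignment defines an \(\OO\)-algebra map
\[
\Phi_{\OO}:U_{\mathbf q,\OO}^+\to\GG_{\OO}(Q),
\]
where the source is the local PBW form of \cref{prop:PBW}. This requires that \(U_{\mathbf q,\OO}^+\) be generated by the \(E_i\) with the relations of \eqref{eq:HPRzero}--\eqref{eq:HPRserre}, at least after tensoring with \(K\). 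That holds because its generic fiber is \(U^+_{\mathbf q}\). The map is root-graded: \(\deg_Q Z_i=\alpha_i\) by \cref{prop:characters}.

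\emph{Special fiber.} Reduce \(\Phi_{\OO}\) modulo \(u-1\). By \cref{prop:PBW}, the source becomes \(U_{t_0}^+(\mathfrak g)\). By \eqref{eq:specialfiberG}, the target becomes \(\mathcal K_{t_0}(\CQ)\otimes F\). Composing with the Hernandez--Leclerc identification, the reduced map sends \(E_i\mapsto c_iE_i\) by \cref{lem:simplefiber}, so it is an automorphism and in particular injective.

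\emph{Injectivity and surjectivity.} Suppose \(\Phi_K\) has a nonzero kernel. \Cref{lem:kerneldescent} then produces a kernel element of \(\Phi_{\OO}\) whose reduction modulo \(u-1\) is nonzero. That reduction lies in the kernel of the injective special-fiber map, which is a contradiction, so \(\Phi_K\) is injective. For surjectivity, work one degree \(\gamma\in\LL_+\) at a time. The source has dimension \(p(\gamma)\) by the PBW basis of \(U^+_{\mathbf q}\), which is the generic fiber of a free \(\OO\)-module with special fiber \(U^+_{t_0}\). The target has dimension \(p(\gamma)\) by \cref{thm:graded-twist}. An injective map between finite-dimensional spaces of equal dimension is bijective. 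Alternatively, the image contains the generators \(Z_{i,p}\) of \(\GG_R(Q)\), because these are products of the \(Z_i\) up to lower terms; the dimension count avoids having to check this.

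Everything so far is over \(K\), since \(\OO\subset K\) and \(\GG_R(Q)\otimes_RK=\GG_{\OO}(Q)\otimes_{\OO}K\). The auxiliary field \(K'\) is used only in the earlier lemmas. The parameters are those of \eqref{eq:intro-q} by \cref{prop:hprcompat}.

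\emph{Main obstacle.} The delicate step is checking that \(\Phi_{\OO}\) is really defined on the local PBW lattice, and not only on the abstract presentation algebra over \(\OO\). The lattice \(U^+_{\mathbf q,\OO}\) is the span of PBW vectors, and these involve root vectors with quantum-factorial normalizations. I would first show that the subalgebra generated by the \(E_i\) over \(\OO\), together with those normalized root vectors, maps into \(\GG_{\OO}(Q)\). The natural route is to note that the normalizing denominators are \(\OO\)-units, as \cref{prop:PBW} asserts. Failing that, I would run \cref{lem:kerneldescent} directly with the \(\OO\)-subalgebra generated by the \(E_i\) in place of the PBW lattice. Its kernel-descent argument only needs a free source lattice whose special fiber maps into \(U^+_{t_0}\), and that is supplied by \cref{prop:PBW}.
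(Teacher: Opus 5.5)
Your proposal is correct and follows the paper's proof essentially step for step. The map is defined on the local PBW form via \cref{prop:Cartan-rel,prop:PBW}, and its special fiber is the diagonal automorphism \(E_i\mapsto c_iE_i\) by \cref{prop:localbasis,lem:simplefiber}; generic injectivity then follows from \cref{lem:kerneldescent}, and surjectivity from the equal graded dimensions \(p(\gamma)\) on both sides (\cref{thm:graded-twist}). One harmless inaccuracy: the Serre coefficient \((1+q_{ii})q_{ij}=t_0^{-1}+t_0t\) is not a monomial, but it lies in \(R\subset\OO\), which is all that well-definedness over \(\OO\) requires.
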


\begin{remark}[Why the surjectivity argument is noncircular]
The equality of graded dimensions used at the end of
Theorem~\ref{thm:main} was established before the local spectral theorem,
via the Euler-correction identity, the graded torus twist, character
comparison, and the Hernandez--Leclerc one-parameter PBW basis.  It does
not use the multiparameter realization.  Thus injectivity plus equal
graded dimensions is a genuine final step rather than a restatement of
the theorem being proved.
\end{remark}

\begin{proof}
By \cref{prop:Cartan-rel,prop:PBW}, the assignment defines an
\(\OO\)-homomorphism
\[
 \Phi_{\OO}:U_{\mathbf q,\OO}^+\longrightarrow\GG_{\OO}(Q)
\]
and hence the desired generic homomorphism. Its special fiber is an
isomorphism by \cref{prop:localbasis,lem:simplefiber}.

If the generic kernel were nonzero, Lemma~\ref{lem:kerneldescent}
would produce a nonzero element of the local kernel with nonzero reduction
modulo \(u-1\).  But the special fiber of the source is
\(U_{t_0}^+(\mathfrak g)\), the special fiber of the target is the
Hernandez--Leclerc algebra, and by Lemma~\ref{lem:simplefiber} the induced
map is the diagonal automorphism
\[
E_i\longmapsto c_iE_i,\qquad c_i\in F^\times.
\]
Its kernel is zero, a contradiction.  Hence \(\Phi\) is injective.

The map preserves \(\LL_+\)-degrees. In every degree its source has
PBW dimension \(p(\gamma)\), while its target has the same dimension
by \cref{thm:graded-twist}. An injective map between these two finite
dimensional spaces is surjective. Therefore \(\Phi\) is an
isomorphism in every degree and proves \eqref{eq:mainiso}.
\end{proof}
Here specialization is taken on the local forms constructed above; it is
not evaluation of arbitrary elements of the generic field at \(u=1\).

\section{Homogeneous quantum seeds and the second form}\label{sec:cluster}
The Euler correction also has a direct cluster-theoretic meaning. We state
it for homogeneous seed data, separating compatibility from any additional
claim about categorical realizations of all cluster variables.

Let \((M,\widetilde B,\Lambda_0)\) be a based quantum seed with
\(N\) variables and \(m\) mutable directions, in the sense of
\cite{BZ}. Write \(g_1,\ldots,g_N\in\LL\) for its degrees and
let \(G\) be the matrix of these column vectors in the simple-root basis.
Homogeneity of the exchange relations is the condition
\begin{equation}\label{eq:GB}
 G\widetilde B=0.
\end{equation}
Indeed the two exchange monomials have degrees differing by
\(\sum_i b_{ik}g_i\). Let \(W_Q\) be the matrix of \(\om\).
Assume the seed variables are homogeneous elements in the graded torus
comparison of \cref{prop:torus-twist}.

\begin{proposition}[Compatibility of the Euler correction]\label{prop:cluster}
If
\[
 \widetilde B^T\Lambda_0=(2I_m\mid0),\qquad G\widetilde B=0,
\]
then the second toroidal commutation matrix is
\begin{equation}\label{eq:Lambda2}
 \Lambda_2=\frac12\left(\Lambda_0+G^TW_QG\right),
\end{equation}
and
\begin{equation}\label{eq:twocompat}
 \widetilde B^T\Lambda_2=(I_m\mid0).
\end{equation}
When \(\Lambda_2\) is integral, these are two compatible skew forms
for a toroidal seed with parameters \(t_0,t\).
\end{proposition}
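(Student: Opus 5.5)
The first step is to pin down what "the second toroidal commutation matrix" means for a homogeneous seed. The seed variables are homogeneous elements of the based torus. In the one-parameter torus at $v$, their quasi-commutation exponents form $\Lambda_0$, with $x_i*_v x_j = v^{(\Lambda_0)_{ij}}x_j*_v x_i$. We will first check that $\Lambda_0$ is indeed the $D_0$-pairing on leading exponent vectors. By Proposition~\ref{prop:torus-twist}, the toroidal product of homogeneous elements of degrees $g_i,g_j$ is the one-parameter product multiplied by $w^{\om(g_i,g_j)}$. The toroidal commutation scalar is therefore
\[
 v^{(\Lambda_0)_{ij}}\,w^{2\om(g_i,g_j)}
 =t_0^{(\Lambda_0)_{ij}}\,u^{(\Lambda_0)_{ij}+\om(g_i,g_j)}.
\]
Writing it as $t_0^{(\Lambda_0)_{ij}}t^{(\Lambda_2)_{ij}}$ with $t=u^2$ gives $2(\Lambda_2)_{ij}=(\Lambda_0)_{ij}+\om(g_i,g_j)$. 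Since $\om(g_i,g_j)=(G^TW_QG)_{ij}$, this is exactly \eqref{eq:Lambda2}. We can cross-check the same formula at the level of monomials using Theorem~\ref{thm:kernel}: $2D^{(2)}=D_0+\om\circ(\deg_Q,\deg_Q)$, applied to the leading (or any) exponent vectors, whose $\deg_Q$ equals $g_i$ by Lemmas~\ref{lem:lower} and~\ref{lem:mesh}.

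The compatibility \eqref{eq:twocompat} is then pure linear algebra:
\[
 \widetilde B^T\Lambda_2=\tfrac12\widetilde B^T\Lambda_0+\tfrac12(G\widetilde B)^TW_QG.
\]
The second term vanishes by $G\widetilde B=0$, and the first is $\tfrac12(2I_m\mid 0)=(I_m\mid0)$. Skew-symmetry of $\Lambda_2$ follows from the skew-symmetry of $\Lambda_0$ and $W_Q$. So when $\Lambda_2$ is integral, $(\Lambda_0,\Lambda_2)$ is a pair of skew forms, each compatible with $\widetilde B$, with constants $2$ and $1$. This is what a toroidal seed with parameters $t_0,t$ requires.

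The only delicate point is the first step. We must justify that the quasi-commutation of homogeneous seed elements is governed by a single scalar, and that this scalar is what $\Lambda_2$ records. Homogeneity is exactly what makes the twist factor uniform across all monomial pairs, as in the proof of Proposition~\ref{prop:torus-twist}. So if $x_i,x_j$ $v$-commute in the one-parameter torus, they commute toroidally with the stated scalar, and no additional identification of cluster variables with simple classes is needed. Integrality of $\Lambda_2$ is not automatic, which is why it appears as a hypothesis. We will not attempt to prove it.
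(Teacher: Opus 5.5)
Your proposal is correct and follows essentially the same route as the paper. You obtain $\Lambda_2$ by multiplying the one-parameter quasi-commutation factor $v^{\Lambda_0(i,j)}$ by the twist ratio $w^{2\om(g_i,g_j)}=u^{\om(g_i,g_j)}$ from \cref{prop:torus-twist}, and you then derive \eqref{eq:twocompat} from $(G\widetilde B)^TW_QG=0$ together with half of the first compatibility identity; the monomial-level cross-check through \cref{thm:kernel} is a harmless addition that the paper does not need.
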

\begin{proof}
The quasi-commutation factor of two homogeneous seed variables in the
moving one-parameter torus is \(v^{\Lambda_0(i,j)}\). Twisting by
\(\kappa\) changes it by
\(\kappa(g_i,g_j)/\kappa(g_j,g_i)=u^{\om(g_i,g_j)}\).
Thus the factor is
\[
 t_0^{\Lambda_0(i,j)}u^{\Lambda_0(i,j)+\om(g_i,g_j)}
 =t_0^{\Lambda_0(i,j)}t^{\Lambda_2(i,j)},
\]
proving \eqref{eq:Lambda2}. Matrix multiplication gives
\[
 \widetilde B^TG^TW_QG=(G\widetilde B)^TW_QG=0.
\]
Now \eqref{eq:twocompat} follows by taking one half of the first
compatibility identity. Skewness is clear. For a seed realized in the
original integral two-parameter based torus the integrality is already
part of its exponent data; for abstract homogeneous seed data it is an
explicit hypothesis, not an automatic consequence of homogeneity alone.
\end{proof}

\begin{proposition}[Mutation covariance]\label{prop:mutation}
Under homogeneous seed mutation, \eqref{eq:Lambda2} transforms by the
usual skew-form mutation. In particular, the two compatible forms remain
compatible along the mutation pattern.
\end{proposition}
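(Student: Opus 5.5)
The plan is to express everything through the Berenstein--Zelevinsky mutation matrices. Then \eqref{eq:Lambda2} becomes a congruence-invariant expression, and covariance reduces to one matrix identity. Fix a mutable direction \(k\) and a sign \(\varepsilon\in\{\pm1\}\). Let \(E=E_\varepsilon\) and \(F=F_\varepsilon\) be the \(N\times N\) and \(m\times m\) matrices of \cite{BZ}. The mutated data are then
\[
\widetilde B'=E\widetilde B F,\qquad \Lambda_0'=E^T\Lambda_0E,
\]
and, by \cite{BZ}, \(E^2=I\), \(F^2=I\), and both are independent of \(\varepsilon\) in the resulting seed.

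\emph{Step 1: transformation of the degree matrix.} I will show that \(G'=GE\). The new cluster variable is the based monomial with exponent vector \(-e_k+\sum_i[\varepsilon b_{ik}]_+e_i\), i.e.\ column \(k\) of \(E_\varepsilon\). Its degree is therefore \(-g_k+\sum_i[\varepsilon b_{ik}]_+g_i\), and the other degrees are unchanged. Homogeneity \(G\widetilde B=0\) makes this independent of \(\varepsilon\): the two choices differ by \(\sum_i b_{ik}g_i=0\). In particular the new variable is homogeneous, so the standing assumption of \cref{prop:cluster} persists. Moreover
\[
G'\widetilde B'=GE^2\widetilde BF=G\widetilde BF=0,
\]
so the homogeneity hypothesis is also mutation-stable.

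\emph{Step 2: covariance of \(\Lambda_2\).} Apply \eqref{eq:Lambda2} to the mutated seed:
\[
\Lambda_2'=\tfrac12\bigl(E^T\Lambda_0E+E^TG^TW_QGE\bigr)=E^T\Lambda_2E.
\]
This is exactly the usual skew-form mutation rule. Integrality of \(\Lambda_2\) is preserved because \(E\) is an integer matrix. Equivalently, the quasi-commutation factor \(t_0^{\Lambda_0}t^{\Lambda_2}\) of the mutated seed is obtained by conjugating monomial exponents, which agrees with the intrinsic computation in the proof of \cref{prop:cluster}.

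\emph{Step 3: compatibility.} By the Berenstein--Zelevinsky lemma, if \(\widetilde B^T\Lambda=(D\mid0)\), then \(\widetilde B'^T\Lambda'=(D\mid0)\) for \(\Lambda'=E^T\Lambda E\). Applying this with \(D=2I_m\) to \(\Lambda_0\) and with \(D=I_m\) to \(\Lambda_2\) gives compatibility along every mutation. Alternatively, \eqref{eq:twocompat} can be rederived directly from \cref{prop:cluster} using Step 1.

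The main obstacle is Step 1. One must check that the degree of the exchanged variable is genuinely given by the column of \(E\), using the sign-independence afforded by \(G\widetilde B=0\). The remainder is formal matrix algebra.
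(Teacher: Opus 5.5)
Your argument is the same as the paper's. You get \(G'=GE\) from the common degree of the two exchange terms, which is sign-independent because \(G\widetilde B=0\); both summands of \eqref{eq:Lambda2} are then congruent by \(E\); and you apply the Berenstein--Zelevinsky compatible-pair lemma to each form. Your check \(G'\widetilde B'=GE^2\widetilde BF=0\) makes explicit what the paper only asserts. One wording point: the new variable is the sum of the two based monomials with exponents \(E_{\pm}e_k\), not a single one, but this is harmless because both have the same degree.
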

\begin{proof}
Use the standard mutation matrix \(E\) on the lattice of seed exponents.
It is the identity on columns other than the mutated column \(k\), with
\[
 E e_k=-e_k+\sum_i[-b_{ik}]_+e_i.
\]
The degree of the new cluster variable is the common degree of the two
exchange terms minus \(g_k\); hence \(G'=GE\). The ordinary skew
form transforms as \(\Lambda_0'=E^T\Lambda_0E\). Therefore
\begin{align*}
 \Lambda_2'&=\tfrac12\bigl(\Lambda_0'+(G')^TW_QG'\bigr)\\
 &=E^T\tfrac12\bigl(\Lambda_0+G^TW_QG\bigr)E
   =E^T\Lambda_2E.
\end{align*}
The standard compatible-pair mutation identity of \cite{BZ} applies to
each form, or equivalently one may use \cite[Proposition 2.3]{FH}.
The mutated exchange relation remains homogeneous, so \(G'\widetilde
B'=0\) as required for the next step.
\end{proof}
This gives a verifiable seed-level statement and explains the constants
\(2\) and \(1\). It does not identify every toroidal cluster variable
with a deformed simple representation, and no such claim is used in
\cref{thm:main}. The distinction is important when comparing the present
algebraic realization with the wider cluster-structure expectation in
\cite[\S8.3]{FH}.

\section{Branching examples and exact kernel checks}\label{sec:examples}
The following examples compute root labels and inverse-Cartan data. They
are not presented as independent enumerations of all monomials of an
exceptional fundamental character. The spectrum statements follow from the
proved structural results.

\subsection{Type \texorpdfstring{\(D_4\)}{D4}}
Use the labeling with central vertex \(2\) and sources \(I_+=\{2\}\):
\begin{figure}[htbp]
\centering
\begin{tikzpicture}[>=Stealth,
  vertex/.style={circle,draw,inner sep=1.8pt,minimum size=7mm}]
\node[vertex] (c) at (0,0) {$2$};
\node[vertex] (l) at (-1.7,0) {$1$};
\node[vertex] (r) at (1.7,0) {$3$};
\node[vertex] (b) at (0,-1.35) {$4$};
\draw[->] (c)--(l);
\draw[->] (c)--(r);
\draw[->] (c)--(b);
\end{tikzpicture}
\caption{The bipartite orientation of \(D_4\) used in the branching
calculation.  The trivalent vertex \(2\) is the unique source.}
\label{fig:D4quiver}
\end{figure}
Thus \(\xi_2=1\), \(\xi_1=\xi_3=\xi_4=0\), and
\[
 \tau=s_2s_1s_3s_4,\qquad
 \gamma_2=\alpha_2,\qquad \gamma_j=\alpha_j+\alpha_2\quad(j=1,3,4).
\]
The complete level-zero root labeling is given in \cref{tab:D4}. It can
be checked by applying \(\tau\) once for each downward shift of two.
\begin{table}[htbp]
\centering
\caption{The twelve positive-root positions for the chosen \(D_4\) quiver.}
\label{tab:D4}
\begin{tabular}{ccl@{\qquad}ccl}
\toprule
\(i\)&\(p\)&\(\beta_{i,p}\)&\(i\)&\(p\)&\(\beta_{i,p}\)\\
\midrule
1&0&\(\alpha_1+\alpha_2\)&2&1&\(\alpha_2\)\\
1&\(-2\)&\(\alpha_2+\alpha_3+\alpha_4\)&2&\(-1\)&\(\alpha_1+2\alpha_2+\alpha_3+\alpha_4\)\\
1&\(-4\)&\(\alpha_1\)&2&\(-3\)&\(\alpha_1+\alpha_2+\alpha_3+\alpha_4\)\\
3&0&\(\alpha_2+\alpha_3\)&4&0&\(\alpha_2+\alpha_4\)\\
3&\(-2\)&\(\alpha_1+\alpha_2+\alpha_4\)&4&\(-2\)&\(\alpha_1+\alpha_2+\alpha_3\)\\
3&\(-4\)&\(\alpha_3\)&4&\(-4\)&\(\alpha_4\)\\
\bottomrule
\end{tabular}
\end{table}
In particular the simple-root positions are
\begin{equation}\label{eq:D4simple}
 x_1=(1,-4),\quad x_2=(2,1),\quad x_3=(3,-4),\quad x_4=(4,-4).
\end{equation}
The source class is \(Z_2=Y_{2,1}\), while the other distinguished
classes have highest monomials \(Y_{j,-4}\).

Let \(H\) be the adjacency matrix. The inverse-Cartan recurrence gives
\[
 \begin{aligned}
 \ct(1)&=I,\qquad \ct(2)=H,\qquad \ct(3)=H^2-I,\\
 \ct(4)&=H^3-2H,\qquad \ct(6)=H^5-4H^3+3H.
 \end{aligned}
\]
For a central--leaf entry, \((H)_{2j}=1\),
\((H^3)_{2j}=3\), and \((H^5)_{2j}=9\), so
\[
 \ct_{2j}(4)=1,\qquad \ct_{2j}(6)=0\qquad(j=1,3,4).
\]
The spectral gap between \((2,1)\) and \((j,-4)\) is \(5\).
Consequently
\[
 D_0(Y_{2,1},Y_{j,-4})=\ct_{2j}(4)-\ct_{2j}(6)=1,
 \qquad D^{(2)}(Y_{2,1},Y_{j,-4})=\ct_{2j}(4)=1.
\]
This verifies the base pair independently of the Euler-form evaluation.
At the branch the root monomial is
\[
 A_{2,r}=Y_{2,r-1}Y_{2,r+1}Y_{1,r}^{-1}Y_{3,r}^{-1}Y_{4,r}^{-1}.
\]
Despite the three neighbors, only \(A_{2,0}\) and \(A_{2,2}\) pair
nontrivially with \(Y_{2,1}\). The spectral argument gives the same
pair of eigenvalues on each of the three edges.

The parameter matrix is particularly transparent:
\[
 \mathbf q=\begin{pmatrix}
 t_0^2t&t_0^{-1}t^{-1}&1&1\\
 t_0^{-1}&t_0^2t&t_0^{-1}&t_0^{-1}\\
 1&t_0^{-1}t^{-1}&t_0^2t&1\\
 1&t_0^{-1}t^{-1}&1&t_0^2t
 \end{pmatrix}.
\]
The leaf--leaf parameters are exactly \(1\), not unspecified reciprocal
units. This is an explicit consequence of the Euler correction identity.

\subsection{Type \texorpdfstring{\(E_6\)}{E6} with the standard trivalent vertex}
Use the chain \(1-3-4-5-6\), with vertex \(2\) attached to vertex
\(4\).  Orient from sources \(\{1,4,6\}\) to sinks \(\{2,3,5\}\), as in
\cref{fig:E6quiver}.
\begin{figure}[htbp]
\centering
\begin{tikzpicture}[>=Stealth,
  vertex/.style={circle,draw,inner sep=1.8pt,minimum size=7mm}]
\node[vertex] (n1) at (0,0) {$1$};
\node[vertex] (n3) at (1.45,0) {$3$};
\node[vertex] (n4) at (2.90,0) {$4$};
\node[vertex] (n5) at (4.35,0) {$5$};
\node[vertex] (n6) at (5.80,0) {$6$};
\node[vertex] (n2) at (2.90,1.25) {$2$};
\draw[->] (n1)--(n3);
\draw[->] (n4)--(n3);
\draw[->] (n4)--(n5);
\draw[->] (n6)--(n5);
\draw[->] (n4)--(n2);
\end{tikzpicture}
\caption{The bipartite orientation of \(E_6\) used in the exceptional
branching check.  The trivalent vertex \(4\) is a source.}
\label{fig:E6quiver}
\end{figure}
The three arms at vertex \(4\) have lengths \(2,2,1\). In
particular this is the \(E_6\) diagram, not the length pattern
\(1,1,3\) of \(D_6\). Here
\[
 \tau=s_1s_4s_6s_2s_3s_5,
\]
and the initial roots are
\begin{align*}
 \gamma_1&=\alpha_1,&\gamma_4&=\alpha_4,&\gamma_6&=\alpha_6,\\
 \gamma_2&=\alpha_2+\alpha_4,&
 \gamma_3&=\alpha_1+\alpha_3+\alpha_4,&
 \gamma_5&=\alpha_4+\alpha_5+\alpha_6.
\end{align*}
The downward Coxeter recursion yields the following simple-root positions:
\begin{equation}\label{eq:E6simple}
 \begin{array}{c|rrrrrr}
 \text{simple root }i&1&2&3&4&5&6\\ \hline
 \kappa_i&1&2&5&4&3&6\\
 p_i&1&-10&-10&1&-10&1
 \end{array}
\end{equation}
For example, \(\tau^5\gamma_5=\alpha_3\), and
\(\tau^5\gamma_3=\alpha_5\). This illustrates the need to keep
\(i\), the simple-root label, distinct from \(\kappa_i\), the first
index of the corresponding \(Y\)-variable.

Consider the edge \(4\to3\). Its source is \(Y_{4,1}\), while the
sink class has highest monomial \(Y_{5,-10}\). The recurrence gives
\[
 \ct_{45}(10)=1,\qquad\ct_{45}(12)=0.
\]
Thus the spectral gap \(11\) again gives the pair \((1,1)\).
The local branch monomial is
\[
 A_{4,r}=Y_{4,r-1}Y_{4,r+1}Y_{2,r}^{-1}Y_{3,r}^{-1}Y_{5,r}^{-1}.
\]
The only nonzero source wall pairings occur at its centers \(0\) and
\(2\). \Cref{thm:two} determines the resulting spectrum without an
explicit expansion of the whole fundamental character.

\subsection{Reproducible finite checks}
The supplementary program \texttt{verify\_ade\_kernels.py} uses only
integer arithmetic and the Python standard library. It constructs the
Cartan matrices, the simple reflections, the Coxeter element, and the
inverse-Cartan recurrence independently, and then checks
\eqref{eq:D2closed} and \eqref{eq:kernel-main} on the window
\[
 (i,p)=(i,\xi_i-2k),\qquad -12\le k\le12.
\]
All ordered pairs in this window are checked. In addition, the program
walks down each positive Coxeter column to identify the simple-root points
and checks \eqref{eq:base-pair} on every source--sink edge.
\begin{table}[htbp]
\centering
\caption{Exact kernel checks. The positive-root counts are obtained by the
Coxeter-column computation and checked against the finite-type counts.}
\label{tab:checks}
\begin{tabular}{lrr@{\hspace{1.5em}}lrr}
\toprule
Type&\shortstack{Positive\\roots}&\shortstack{Ordered\\pairs}&
Type&\shortstack{Positive\\roots}&\shortstack{Ordered\\pairs}\\
\midrule
\(A_2\)&3&2,500&\(D_5\)&20&15,625\\
\(A_3\)&6&5,625&\(D_6\)&30&22,500\\
\(A_4\)&10&10,000&\(E_6\)&36&22,500\\
\(A_6\)&21&22,500&\(E_7\)&63&30,625\\
\(D_4\)&12&10,000&\(E_8\)&120&40,000\\
\bottomrule
\end{tabular}
\end{table}
The total is \(181{,}875\) ordered kernel pairs, all satisfying the two
identities. These finite tests check conventions and computations; they
do not replace \cref{thm:kernel}, do not establish an identity for
unbounded spectral positions by themselves, and do not enumerate the
monomial support of an \(E_6,E_7\), or \(E_8\) character. The JSON
output records the quiver orientations and all computed simple-root
positions, so the tests can be reproduced without any representation-theory
software package.

\section{Concluding remarks}
The two-parameter deformation has a useful decomposition into a moving
one-parameter product and an alternating degree twist. The decisive
identity is not merely the local vanishing of most \(A\)--\(Y\)
pairings: it is the identification of their residual form on the spectral
quotient with the skew Euler form. This supplies the positive grading and
the homogeneous dimensions required by the spectral argument.

The quadratic-annihilator principle then explains why non-thin modules
and branching vertices do not introduce a third local eigenvalue. The
special fiber is used only after the generic spectral bound has been
established. Both Serre relations and the nonadjacent commutation factors
are consequently determined. The local coefficient ring makes the final
specialization argument precise, including the cancellation of the
uniformizer in a torsion-free target.

Several extensions require separate work. One may seek preferred global
integral forms and basis-compatible isomorphisms, rather than the local
and generic forms used here. The seed compatibility identity suggests
further cluster-theoretic applications, but categorical interpretations of
all toroidal cluster variables are not proved by compatibility alone.
For non-simply-laced types, the inverse-Cartan recurrence and the local
root factors have different shifts; the quotient calculation and the
rank-two spectral bound would both have to be modified. None of these
stronger statements is an assumption of the present realization.

\section*{Funding}
This work is supported in part by the National Natural Science
Foundation of China (No.~12271085). It is also supported by the Natural
Science Foundation of Heilongjiang Province (No.~PL2024A007).

\section*{Data availability statement}
Data sharing is not applicable to this article as no datasets were
generated or analysed during the current study.

\section*{Declaration of competing interests}
The authors declare that they have no known competing financial interests
or personal relationships that could have appeared to influence the work
reported in this paper.

\end{document}